\numberwithin{equation}{section}
\theoremstyle{plain}
    \newtheorem{theorem}{Theorem}[section]
    \newtheorem{corollary}[theorem]{Corollary}
    \newtheorem{lemma}[theorem]{Lemma}
    \newtheorem{proposition}[theorem]{Proposition}
    \newtheorem{prop}[theorem]{Properties}
    \newtheorem*{theorem*}{Theorem}
    \newtheorem*{proposition*}{Proposition}
    \newtheorem*{corollary*}{Corollary}
\theoremstyle{definition}
    \newtheorem*{obs*}{Notation}
    \newtheorem{remark}[theorem]{Remark}
    \newtheorem*{remarks*}{Remarks}
    \newtheorem*{remark*}{Remark}
    \newtheorem{clas*}{Exercises}
    \newtheorem*{claimno*}{Examples}
    \newtheorem*{claim*}{Example}
    \newcommand{\reft}[1]{Theo\-rem~\ref{#1}}
    \newcommand{\refp}[1]{Pro\-po\-si\-tion~\ref{#1}}
    \newcommand{\refl}[1]{Lem\-ma~\ref{#1}}
    \newcommand{\refc}[1]{Co\-rol\-lary~\ref{#1}}
    \newcommand{\refeq}[1]{Eq.~(\ref{#1})}
\newcommand{\K}{\mathbb K}
\newcommand{\E}{\mathbb E}
\newcommand{\bZ}{\mathbf{Z}}
\newcommand\alp{\alpha}
\newcommand\tet{\theta}
\newcommand\var{\varphi}
\newcommand{\nor}{Noe\-the\-rian ring}
\newcommand{\nol}{Noe\-the\-rian lo\-cal ring}
\newcommand{\rmo}{$R$-mo\-du\-le}
\newcommand{\ff}{faith\-fully flat}
\newcommand{\fgr}{fi\-ni\-te\-ly ge\-ne\-ra\-ted $R$-mo\-du\-le}
\newcommand{\wrt}{with respect to\ }
\newcommand{\fg}{fi\-ni\-te\-ly ge\-ne\-ra\-ted}
\newcommand{\gbiu}{ge\-ne\-ric Bour\-baki ideal of $E$ with res\-pect
to $U$}
\newcommand{\ggbie}{good ge\-ne\-ric Bour\-baki ideal of $E$}
\newcommand{\gbi}{ge\-ne\-ric Bour\-baki ideal}
\newcommand{\ggbi}{good ge\-ne\-ric Bour\-baki ideal}
\newcommand{\gbie}{ge\-ne\-ric Bour\-baki ideal of $E$}
\newcommand{\eq}{equi\-mul\-ti\-ple}
\newcommand{\ci}{com\-ple\-te in\-ter\-sec\-tion}
\newcommand{\li}{li\-near\-ly in\-de\-pen\-dent}
\newcommand{\tf}{tor\-sion\-free }
\newcommand{\fgtfrmo}{fi\-ni\-te\-ly ge\-ne\-ra\-ted tor\-sion\-free
$R$-mo\-du\-le }
\newcommand{\fgtfrmos}{fi\-ni\-te\-ly ge\-ne\-ra\-ted tor\-sion\-free
$R$-mo\-du\-les }
\newcommand{\fgrmo}{fi\-ni\-te\-ly ge\-ne\-ra\-ted $R$-mo\-du\-le}
\newcommand{\Wlog}{Wi\-thout loss of ge\-ne\-ra\-li\-ty }
\newcommand{\seq}[2]{({#1}_1,\dotsc,{#1}_{#2})}
\newcommand{\seg}[2]{\langle{#1}_1,\dotsc,{#1}_{#2}\rangle}
\newcommand{\sek}[2]{{#1}_1,\dotsc,{#1}_{#2}}
\newcommand{\pr}{^{\prime}}
\newcommand{\prd}{^{\prime\prime}}
\newcommand{\mat}{\begin{bmatrix}}
\newcommand{\emat}{\end{bmatrix}}
\newcommand{\edet}{\end{matrix} \right|}
\newcommand{\tor}{\operatorname{tor}}
\newcommand{\deter}{\operatorname{det}}
\newcommand{\bca}{\begin{cases}}
\newcommand{\eca}{\end{cases}}
\renewcommand{\det}{\left| \begin{matrix}}
\newcommand{\col}{\colon}
\newcommand{\apl}[3]{#1 \col #2 \ra #3}
\newcommand{\x}{\times}
\newcommand{\ox}{\otimes}
\newcommand{\oxr}{\otimes_{R}}
\newcommand{\+}{\oplus}
\newcommand{\bop}{\bigoplus}
\newcommand{\sube}{\subseteq}
\newcommand{\subn}{\subsetneq}
\newcommand{\nsub}{\nsubseteq}
\newcommand{\sub}{\subset}
\newcommand{\sse}{if and only if\ }
\newcommand{\ra}{\rightarrow}
\newcommand{\lra}{\longrightarrow}
\newcommand{\Lra}{\Longrightarrow}
\newcommand{\hra}{\hookrightarrow}
\newcommand{\tra}{\twoheadrightarrow}
\newcommand{\wdg}{\wedge}
\newcommand{\bwd}{\bigwedge}
\newcommand{\tbwd}[1]{\textstyle{\,\bigwedge^{#1}}\,}
\newcommand{\cP}{{\mathcal P}}
\newcommand{\cE}{{\mathcal E}}
\newcommand{\cS}{{\mathcal S}}
\newcommand{\CR}{{\mathcal R}}
\newcommand{\cF}{{\mathcal F}}
\newcommand{\cO}{{\mathcal O}}
\newcommand{\fm}{{\mathfrak{m}}}
\newcommand{\frp}{{\mathfrak{p}}}
\newcommand{\frk}{k}
\DeclareMathOperator{\im}{im}
\DeclareMathOperator{\coker}{coker}
\DeclareMathOperator{\Id}{id}
\DeclareMathOperator{\rank}{rank}
\DeclareMathOperator{\spec}{Spec}
\DeclareMathOperator{\Ht}{ht}
\DeclareMathOperator{\supp}{Supp}
\DeclareMathOperator{\ann}{ann}
\DeclareMathOperator{\Quot}{Quot}
\DeclareMathOperator{\proj}{proj}
\DeclareMathOperator{\depth}{depth}
\DeclareMathOperator{\grade}{grade}
\DeclareMathOperator{\ad}{ad}
\DeclareMathOperator{\Proj}{Proj}
\DeclareMathOperator{\Bl}{Bl}
\newcommand{\Rm}{(R,\fm)}
\newcommand{\Rmk}{(R,\fm, \frk)}
\DeclareMathOperator{\Q}{Q}
\newcommand{\pjd}{\proj \dim \,}
\newcommand{\dpt}{\depth \,}
\newcommand{\grd}{\grade \,}
\newcommand{\lE}{\ell(E)}
\newcommand{\RE}{\CR(E)}
\newcommand{\FE}{\cF(E)}
\newcommand{\lI}{\ell(I)}
\newcommand{\ol}{\overline}
\begin{document}

\title[Divisors of a module and blow up]
{Divisors of a module and blow up}
\author{Ana L. Branco Correia}
\address{Centro de Estruturas Lineares e Combinat\'orias
\\ Universidade de Lisboa
\\ Av. Prof. Gama Pinto 2  \\ 1649-003 Lisboa
\\ Portugal}
\email{alcorreia@cii.fc.ul.pt}
\thanks{The research of the first author  was made within the activities of the Centro de Estruturas Lineares e Combinat—rias and was partially supported by the Funda\c c\~ao para a Ci\^encia e Tecnologia (Strategic Project PEst-OE/MAT/UI1431/2011).}
\address{ Universidade Aberta \\
Departamento de Ci\^encias e Tecnologia \\
Pal\'acio Ceia \\ Rua da Escola Polit\'ecnica, 147 \\
1269-001 Lisboa \\ Portugal}
\email{matalrbc@uab.pt}
\thanks{}
\thanks{}
\author{Santiago Zarzuela}
\address{Departament d'\`Algebra i Geometria \\ Universitat de
Barcelona \\ Gran Via 585 \\E-08007 Barcelona \\ Spain}
\email{zarzuela@mat.ub.es}
\thanks{The second author was supported by MTM2010-20279-C02-01}
\thanks{}
\keywords{Divisors of a module, Blow up, Zak's inequality, ideal modules, generic Bourbaki ideals.}
\date{\today}
\dedicatory{}
\commby{}


\begin{abstract}
In this paper we work with several divisors of a module $E \subseteq G \simeq R^{e}$ having rank $e$, such as the classical Fitting ideals of $E$ and of $G/E$, and the more recently introduced (generic) Bourbaki ideals $I(E)$ (A. Simis, B. Ulrich, and W. Vasconcelos \cite{suv}) or ideal norms  $[[E]]_R$ (O. Villamayor \cite{ov}). We found several relations and equalities among them which allow to describe in some cases universal properties with respect to $E$ of their blow ups. As a byproduct we are also able to obtain lower bounds for the analytic spread $\ell(\bwd^{e}E)$, related with the algebraic local version of Zak's inequality as explained in A. Simis, K. Smith and B. Ulrich \cite{ssu}.
\end{abstract}


\maketitle

\section{Introduction}

\noindent
Paraphrasing W. Vasconcelos \cite{wv2},
the {\it divisors} of a module $E$ over a ring $R$ are the ideals of $R$ that carry important information about the structure and properties of $E$. Let $E$ be a \fgtfrmo having rank $e$ with a fixed embedding $f \colon E \sube G \simeq R^e$. Then the Fitting ideals of $E$ and of $G/E$ are typical examples. They determine the free locus, and they give rise in particular to the notions of principal class, equimultiple, complete intersection, etc.
Another divisor ideal that play a significant role in the study of the integral closure of E is $\deter_f(E)$, introduced by Rees in \cite{Rees1}. By definition $\deter_f(E)$ is the ideal defined by the image of the mapping $\bwd^e f$ in $\bwd^e R^e \simeq R$
and up to isomorphism $\deter_f(E)= \bwd^e E/\tau_{R}(\bwd^e R^e)$. In \cite{wv2} the ideal $\deter_f(E)$ is also denoted by $\deter_0(E)$, since it does not depend on $f$.
It is useful for instance to lead with reductions of modules. In fact, if $U \sube E$ are modules of the same rank over a domain, then $U$ is a reduction of $E$ if and only if $\deter_0(U)$ is a reduction of $\deter_0(E)$. See also \cite{hs} or \cite{wv2} for further details.

\smallbreak

On the other hand, O. Villamayor attached in \cite{ov} a new class of ideals to $E$, called the norm of $E$ and denoted by $[[E]]_R$, and showed that the blow up at any representant of this class has a universal flattening property.
That is, if $X \overset{\pi}{\lra}\spec(R)$ is the blow up of $R$ with center $[[E]]_R$, then
$\pi^{*}(E)/\tor(\pi^{*}(E))$ is a locally free sheaf of $\cO_{X}$-modules of rank $e$, and $\pi$ is universal for this property.
He named this blow up as the blow up of $R$ at $E$. In the analytical setting the construction of a homomorphism with such flattening property was established by H. Rossi in \cite{Rossi}, who asked for the determination of this construction by some universal property. In fact, under the name of Nash transformation of $R$ at $E$ the existence in general of this universal construction was established by A. Oneto and E. Zatini in \cite{oz}, obtaining as a particular case of this construction the usual Nash blowing-up. Similarly, the higher Nash blowups defined by T. Yasuda in \cite{ya} may be seen as particular cases of these general Nash transformations. And also the $e$-th $F$-blow ups, $e$ a non-negative integer, of a variety $X$ of positive characteristic as defined by Yasuda himself in \cite{ya2}.

\smallbreak

The main novelty in Villamayor's construction is that he showed that there exists a module $E_{1}$ of projective dimension at most $1$ having rank $e$ whose Fitting ideal $F_{e}(E_{1})$ is a representative of $[[E]]_R$, as a fractional ideal. This Fitting ideal $F_{e}(E_{1})$ is in fact a sub-determinantal ideal of any matrix representing $E$, which gives an effective method to compute the blow up of $R$ at $E$. For instance, N. Hara, T. Sawada and T. Yasuda have recently used this approach in \cite{hsy} in order to compute explicitly $F$-blowups for some surface singularities by using Macaulay2 \cite{M2}.

\smallbreak

In section \ref{div}, we make clear the analogies of these ideals by showing that $\deter_{0}(E) = F_{0}(G/E)$ is also a representant of $[[E]]_{R}$. To do this we explore the relationship between Fitting ideals and determinant ideals with exterior algebras. Moreover, by using that $\deter_{0}(E)\deter_{0}(E)^{-1}$ defines the non-free locus of $E$ and that $\deter_{0}(E) \cdot S$ is invertible if and only if $E\oxr S /\tau_{R}(E\oxr S)$ is free for any birational extension $S$ of $R$, we may also show that the blow of $R$
at $\deter_{0}(E)$ has the universal flattening property, providing another proof for the existence of this universal object, somehow in the spirit of \cite[section 2]{oz}.

\smallbreak

As a byproduct of the above study we get some results concerning the algebraic local version of the so-called Zak's inequality. Let $R$ be a \nor{} and $E$ a \fgtfrmo having a rank. We define the {\it Rees algebra} $\RE$ of $E$ to be the quotient $\cS(E)/\tau_{R}(\cS(E))$ of the symmetric algebra by its $R$-torsion submodule $\tau_{R}(\cS(E))$. $\RE$ inherits a natural graduation from the symmetric algebra and we denote by $E^{n}$ the $n$th graded piece, that is, $E^{n}:=\RE_{n}$. If $\Rmk$ is also a local ring the {\it fiber cone} of $\RE$ is the graded ring $\FE := \RE \oxr k = \bigoplus_{n\geq 0} E^{n}/\fm E^{n}$. The Krull dimension of $\cF(E)$ is called the {\it analytic spread} of $E$ and is denoted by $\lE$.
In \cite{su}, A. Simis and B. Ulrich discussed the problem $$ \text{``when does the inequality $\ell\big(\tbwd{e}E\big) \geq \Ht F_{e}(E)$ hold?''} $$ posed in \cite{ssu}. This problem is related to the named Zak's inequality for the dimension of the image of the Gauss map when the variety is not smooth in terms of the dimension of its singular locus. Using the relationships obtained in section \ref{div} for the considered divisor ideals,
we also give some affirmative answers for this question (cf. Proposition \ref{zak} and Corollary \ref{zak3}).

\smallbreak

For some modules $E$ it is possible to find a free submodule $F$
such that the quotient $E/F$ is isomorphic to an
ideal $I$. For example this happens whenever $R$ is a Noetherian normal
domain
and $E$ is a \tf \rmo{} (cf. \cite[Chap. 7, \S4, Th\'eor\`eme 6]{b1}).
If it exists, the ideal $I \simeq E/F$ is called a {\it Bourbaki ideal} of $E$.
More generally an exact sequence of $R$-modules
$$ 0 \ra F \ra E \ra I \ra 0 $$
with $F$ a free module and $I$ an $R$-ideal is called a {\it Bourbaki
sequence}.
In \cite{suv}, A. Simis, B. Ulrich, and W. Vasconcelos developed the notion of generic Bourbaki ideals defined over a suitable faithfully flat extension of $R$. This approach has the advantage that the Bourbaki ideal thus obtained is essentially unique. As a matter of fact, they proved that for some families of modules $E$ over a local ring $R$ and ha\-ving rank there is a suitable Nagata extension $R\prd$ of $R$, together with a free $R\prd$-module $F$ such that $E\prd/F= \ol{E}$ is isomorphic to an $R\prd$-ideal of rank $1$, where $E\prd=E\oxr R\prd$
(cf. \cite[Proposition~3.2]{suv}).
They wrote $I(E)$ to denote this ideal and called it a {\it generic
Bourbaki ideal of $E$}.
The induced epimorphism of $R\prd$-algebras
$\RE \oxr R\prd \simeq \CR(E\prd) \tra \CR(I(E))$
plays a major role throughout their work.
They used it to transfer properties about $\RE$ to $\CR(I(E))$
and vice-versa.
In fact, in the case where the kernel is generated by a regular
sequence on $\CR(E\prd)$, then $\CR(E\prd)$ is a {\it
deformation} of $\CR(I(E))$ and properties such as Cohen-Macaulayness
and normality can be transferred from $\CR(I(E))$ to $\CR(E\prd)$, and hence to
$\RE$ (cf. \cite[Theorem~3.5]{suv}).

\smallskip

In \cite{cz3} we examined their construction of $I(E)$ and proved other properties. In par\-ti\-cu\-lar, we related the depth of $\RE$ and of $\CR(I(E))$.
In section \ref{qm} we establish other relations between $E$
and a $I(E)$. Especially, we obtain a formula relating the minimal number of generators of $E$ and of $I(E)$.

\begin{theorem*}{\rm [cf. \reft{free2}]}
    Let $R$ be a \nol, let $E$ be a \fg{} \rmo{} having
    rank $e \geq 2$ (and $U$ a reduction of $E$).
    If $I(E)$ is a \gbie{} (with respect to $U$) then
    \begin{equation}\label{mue}
    \mu(E)= \mu(I(E))+e-1.
    \end{equation}
\end{theorem*}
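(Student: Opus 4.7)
My plan is to reduce the Bourbaki exact sequence
\begin{equation*}
0 \lra F \lra E\prd \lra I(E) \lra 0,
\end{equation*}
coming from the construction in \cite{suv} recalled in the introduction, modulo the maximal ideal of the faithfully flat Nagata extension $R\prd$, and then count $\frk\prd$-dimensions. Here $R\prd = R[\{Z_{ij}\}]_{\fm R[\{Z_{ij}\}]}$ has maximal ideal $\fm\prd = \fm R\prd$ and residue field $\frk\prd = R\prd/\fm\prd$ (a purely transcendental extension of $\frk$), $E\prd = E \oxr R\prd$, and $F$ is the free rank-$(e-1)$ $R\prd$-submodule of $E\prd$ with basis $y_i = \sum_{j=1}^n Z_{ij} u_j$, where $u_1,\dotsc,u_n$ is a fixed generating set of $U$.

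Faithful flatness of $R \to R\prd$ together with $\frk\prd/\frk$ being a field extension gives $\mu_{R\prd}(E\prd) = \mu_R(E)$ and $\mu_{R\prd}(I(E)) = \mu_R(I(E))$, so \eqref{mue} may be checked over $R\prd$. Tensoring the Bourbaki sequence with $\frk\prd$ produces the right exact sequence
\begin{equation*}
F/\fm\prd F \xrightarrow{\bar\phi} E\prd/\fm\prd E\prd \lra I(E)/\fm\prd I(E) \lra 0,
\end{equation*}
and hence $\mu(I(E)) = \mu(E) - \dim_{\frk\prd}\im(\bar\phi)$. Since $\dim_{\frk\prd}(F/\fm\prd F) = e - 1$, identity \eqref{mue} is equivalent to showing that $\bar\phi$ is injective.

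To establish this injectivity, note that in $E\prd/\fm\prd E\prd \simeq (E/\fm E) \oxr_\frk \frk\prd$ one has $\bar y_i = \sum_j Z_{ij}\, \bar u_j$, and any linear dependence $\sum_i \lambda_i \bar y_i = 0$ with $\lambda_i \in \frk\prd$ translates, upon choosing a $\frk$-basis of $W := (U + \fm E)/\fm E \sube E/\fm E$, into the vanishing of an $\frk\prd$-linear combination of the rows of the matrix $(Z_{ij})_{1\le i\le e-1,\,1\le j\le s}$, where $s = \dim_\frk W$. Since the $Z_{ij}$ are algebraically independent over $\frk$, the entries of this matrix remain algebraically independent in $\frk\prd$, so it has full row rank $e-1$ as soon as $s \geq e - 1$, forcing all $\lambda_i = 0$.

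The hard part is therefore the numerical bound $s \geq e - 1$. When $U = E$ this is immediate from $s = \mu(E) \geq \rank E = e$. For a proper reduction $U \sube E$, I would choose a minimal reduction $V$ of $E$ contained in $U$ and invoke the standard fact that the images in $E/\fm E$ of a minimal generating set of $V$ are $\frk$-linearly independent (they give a homogeneous system of parameters in degree one of the fiber cone $\FE$), which yields
\begin{equation*}
s \geq \dim_\frk (V + \fm E)/\fm E = \mu(V) = \ell(E) \geq \rank E = e.
\end{equation*}
The final inequality $\ell(E) \geq \rank E$ is where the rank-$e$ hypothesis on $E$ enters, and together with the genericity of the $Z_{ij}$ this is essentially the only ingredient needed beyond formal manipulation of the Bourbaki sequence.
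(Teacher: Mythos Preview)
Your argument is correct and takes a more direct route than the paper's. The paper proceeds by induction on the rank of $F$: the base case (\refl{mu}) shows that the single generic element $x=\sum z_i a_i$ lies outside $\fm V\prd$ via a degree argument in one of the variables $z_i$, and the inductive step (\refl{mu1}) threads this through the tower of Nagata extensions $R_j\prd$. You instead establish the linear independence of all the $\bar y_1,\ldots,\bar y_{e-1}$ in $E\prd/\fm\prd E\prd$ in one stroke by a rank computation on a generic matrix. This is cleaner for the present statement; the paper's inductive machinery, on the other hand, is set up to be reused for the deeper deformation results such as \refeq{Ebarra2} and \refeq{Ebarra3}.

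Two small points deserve tightening. First, the matrix whose rows must be independent is not literally $(Z_{ij})_{i\le e-1,\,j\le s}$ but $Z\cdot C$, where $Z=(Z_{ij})$ is $(e-1)\times n$ and $C$ is the $n\times s$ matrix (with entries in $\frk$) expressing the $\bar u_j$ in your chosen basis of $W$. Since $C$ has full column rank $s$, the $(e-1)s$ entries of $ZC$ are still algebraically independent over $\frk$ (or simply specialise $Z_{ij}\mapsto 0$ for those $j$ with $\bar u_j$ redundant), so your conclusion stands. Second, the identity $\dim_\frk (V+\fm E)/\fm E=\mu(V)$ for a minimal reduction $V$ is the paper's \refp{nota}, which is stated for \emph{torsionfree} $E$; for general $E$ of rank $e$ one should pass to $E/\tau_R(E)$ and use that the surjection $E/\fm E\twoheadrightarrow (E/\tau_R(E))\big/\fm(E/\tau_R(E))$ carries $(U+\fm E)/\fm E$ onto the corresponding subspace. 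Also, the detour through $\ell(E)$ is unnecessary (and the equality $\mu(V)=\ell(E)$ needs $\frk$ infinite): it suffices to note that any reduction of $E$ has rank $e$, whence $\mu(V)\ge\rank V=e$.
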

   This formula may be seen as an extension of a similar one for the analytic spread proved in \cite{suv}. In particular, Eq. (\ref{mue}) implies that there exists a generic Bourbaki $I(E)$ which is perfect of grade $2$ if and only if $E$ is not a free $R$-module and also $\pjd E=1
   $.
\smallbreak

In section \ref{gbifi}, we relate generic Bourbaki ideals with Fitting ideals for a \fgrmo{} $E$ having rank $e \geq 2$.
Supposing that $I\simeq E''/F$ is a \gbie{} over a Nagata extension $R''$ of $R$, we extend a basis $\sek{x}{e-1}$ of $F$ to a generating set $\sek{x}{n}$ of $E''=E\oxr R''$ and consider a finite free presentation $R''^m \overset{\varphi}{\ra} R''^n \overset{\phi}{\ra} E'' \ra 0$ with respect to this set of generators, that is $\mat x_{1} & \cdots & x_{n}\emat \varphi=0$. For this presentation there exists an $(n-e+1) \x (n-e)$ submatrix $\psi$ of $\varphi$ satisfying $\grd I_{n-e}(\psi) \geq 1$, and in the case where $\grd F_{e}(E) \geq 2$ then $E''/F \simeq I_{n-e}(\psi)$ (see Theorem \ref{fi}). As a consequence, we deduce that any generic Bourbaki ideal is always isomorphic to a Fitting ideal. It is important to note this Fitting ideal is also a sub-determinantal ideal of a specific presentation of $E''$.

\smallbreak

The interaction between their Rees algebras and the relations between numerical invariants of $E$ and $I(E)$ such as reduction number, analytic spread, minimal number of generators, justify that a generic Bourbaki ideal $I(E)$ may also be viewed as a divisor of $E$. In the final section \ref{div2} we return to the questions about the relations among the different divisor ideals of $E$ including now the generic Bourbaki ideal. The realization obtained by Villamayor of the norm ideal as a Fitting ideal is the key for that inclusion. Observing first that the norm ideal behaves well under the extension of scalars by flat homomorphisms, we have that
$$ I(E) \simeq I_{n-e}(\psi) \subseteq I_{n-e}(\rho) \subseteq F_{e}(E''),$$
with $\rho$ the $n\times (n-e)$ submatrix of $\varphi$ containing $\phi$ and  $I_{n-e}(\rho) \simeq [[E'']]_{R''} \simeq \deter_{0}(E)\cdot R''$. If moreover $\pjd E=1$ then $I_{n-e}(\rho) = F_{e}(E'')$. In particular, since if $E$ is a contracted module over a $2$-dimensional regular local ring it may be seen that $F_{e}(E'')$ itself is a generic Bourbaki ideal \cite{hu}, we get that there exists a generic Bourbaki ideal $I(E)$ of $E$ which is a representative of $[[E'']]_{R''}$ (cf. Corollary \ref{norma3}), and so it fulfills the corresponding universal flattening property with respect to $E''$.


\smallbreak

For any unexplained terminology we refer to the book of W. Bruns and J. Herzog \cite{bh}. Finally, we would like to thank T. Yasuda for telling us about the paper by Oneto-Zatini \cite{oz}.

\section{Divisors of a module - part 1}\label{div}

\subsection{Fitting ideals}

Let $E$ be a \fgrmo. Then the Fitting ideals of $E$
are typical examples of divisors of $E$.
By definition, $F_{i}(E):=I_{n-i}(\varphi)$ is the ideal generated by the $(n-i)\x (n-i)$ minors of $\varphi$ where
\begin{equation} \label{pE}
R^m \overset{\varphi}{\ra} R^n \overset{\phi}{\ra} E \ra 0
\end{equation}
is a finite presentation of $E$, $0 \leq i \leq n$.
This ideal is independent of the choices of the generators and of any finite free presentation (\ref{pE}) of $E$.
We have that $F_i(E) \subseteq F_{i+1}(E)$ and if, in addition, $E$ has rank $e$ (which means that $E \oxr \Quot(R)\simeq \Quot(R)^{e}$) then  $F_e(E)$ is the smallest Fitting ideal which is different from zero. Hence if $E$ has positive rank $e$ then \begin{equation*} \label{fid}
F_{0}(E)= \cdots = F_{e-1}(E)=(0) \subn F_e(E) \sube \cdots \sube F_{i}(E) \sube \cdots \sube R.
\end{equation*}

Now suppose that we have a given embedding $E \sube G \simeq R^e$ with $E$ of rank $e$. Then, $G/E$ has rank $0$ and the first non-zero Fitting ideals $F_e(E)$ and $F_0(G/E)$ are related by the inclusion $V(F_e(E)) \subseteq V(F_0(G/E)) = \supp G/E$. Recall that a torsionfree module $E \sube G \simeq R^e$ is an {\it ideal module} if $\grd G/E \geq 2$. (This condition implies that $E$ has rank $e$. Also that $E$ is not free if $E \subsetneq G$.) In this case,
\begin{equation}\label{FeF0}
V(F_{e}(E))=V(F_0(G/E))=\supp G/E.
\end{equation}
In particular, $\grd F_{e}(E)\geq 2$. See  \cite[section 3]{cz2} for further details.
In this section, we prove that $F_0(G/E) \subseteq F_e(E)$ (up to an isomorphism), and that the equality holds for ideal modules having projective dimension equal to one (see Proposition \ref{idmpj1}).

\subsection{Determinants}

Let $E$ be a \fgrmo{} and let $\apl{f}{E}{R^e}$ be an $R$-homomorphim.  The image of the mapping $\bwd^e f$ in $\bwd^e R^e \simeq R$ defines an $R$-ideal, that is $\im \bwd^e f = I \cdot \bwd^e R^e \sub \bwd^eR^e$,  where $I$ is an $R$-ideal, and we write (abusing notation slightly)
$$ \deter_f(E) := I =\im \tbwd{e} f \subseteq R.$$
This ideal is called the {\it determinant of the module $E$ with respect to $f$}.

The module $E$ does not need to be torsionfree. In fact, since $\tau_R(E) \subseteq \ker f$, we always have a commutative diagram
$$\xymatrix{ & E \ar@{->>}[dl]_{\pi} \ar[dr]^{f} & \\
E/\tau_R(E) \ar@{->}[rr]^{\ol{f}} & & R^e}$$
where $\pi$ is the canonical epimorphism and $\ol{f} \circ \pi = f$. We have the following pro\-per\-ties.

\begin{prop} \label{dfprop}
Let $R$ be a \nor. Let $E$ be a \fgrmo, $\apl{f}{E}{R^e}$ an $R$-homomorphism and $\apl{\varphi}{R}{S}$ a homomorphism of rings. Then
\begin{enumerate}
\item $\deter_f(E)=\deter_{\ol{f}}(E/\tau_R(E))$.
\item $\deter_{f\ox \Id}(E\oxr S)=\deter_f(E) \cdot S$.
\item $\deter_{f\ox \Id}(E\oxr S)=\deter_{\ol{f\ox \Id}}\big(E\oxr S/\tau_S(E \oxr S)\big).$
\end{enumerate}
\end{prop}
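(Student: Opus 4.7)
The plan is to treat the three items in the order (a), (b), (c), with (c) being an immediate combination of (a) (applied over $S$) and (b).

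For (a), the key observation is the functoriality of the exterior power. Since $\tau_R(E) \sube \ker f$, the map $f$ factors as $f = \ol{f} \circ \pi$ where $\pi \col E \tra E/\tau_R(E)$ is the canonical surjection. Applying $\bwd^{e}$, we obtain $\bwd^{e} f = \bwd^{e}\ol{f} \circ \bwd^{e}\pi$ as maps into $\bwd^{e} R^e$. Now $\bwd^{e}$ preserves surjections (this is a standard property: a surjective map of modules induces a surjection on exterior powers, since alternating products of generators of the quotient lift to alternating products of generators of the source), so $\bwd^{e}\pi$ is surjective. Consequently $\im \bwd^{e} f = \im \bwd^{e}\ol{f}$, which gives $\deter_f(E) = \deter_{\ol{f}}(E/\tau_R(E))$.

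For (b), the plan is to use compatibility of exterior powers with base change. There is a canonical $S$-isomorphism $\bwd^{e}_{S}(E \oxr S) \simeq \bwd^{e}_{R}(E) \oxr S$, and under this identification the map $\bwd^{e}(f \ox \Id)$ corresponds to $(\bwd^{e} f) \ox \Id_{S}$. Taking the natural identifications $\bwd^{e}_{R} R^{e} \simeq R$ and $\bwd^{e}_{S} S^{e} \simeq S$, which are compatible in the sense that the latter is obtained from the former by tensoring with $S$, the image of $(\bwd^{e} f) \ox \Id_{S}$ inside $S$ is exactly the ideal generated by the image of $\bwd^{e} f$ in $R$ pushed forward to $S$. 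In other words, $\deter_{f \ox \Id}(E \oxr S) = \deter_f(E)\cdot S$.

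Finally, (c) is obtained by applying (a) over the ring $S$ to the module $E \oxr S$ and the map $f \ox \Id \col E \oxr S \ra S^{e}$. The torsion subsumed is now $\tau_{S}(E \oxr S)$, and the factorization $f \ox \Id = \ol{f \ox \Id} \circ \pi_{S}$ through the canonical surjection yields $\deter_{f \ox \Id}(E \oxr S) = \deter_{\ol{f \ox \Id}}(E \oxr S/\tau_{S}(E \oxr S))$.

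The only genuine point to watch is the bookkeeping with the natural isomorphisms in (b): one must be sure that the identification $\bwd^{e}_{S} S^{e} \simeq S$ is chosen as the base change of $\bwd^{e}_{R} R^{e} \simeq R$, so that the ideal $\deter_f(E) \cdot S$ is well-defined and equal to the image. Once this is set, all three statements follow formally. I do not anticipate a substantial obstacle here beyond this compatibility check.
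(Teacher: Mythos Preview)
Your proposal is correct and follows essentially the same approach as the paper: part (a) via surjectivity of $\bwd^{e}\pi$, part (b) via the base-change compatibility of exterior powers, and part (c) by applying (a) over $S$. The paper's own proof is terser but uses exactly these ingredients, and your additional remark about choosing compatible identifications $\bwd^{e}R^{e}\simeq R$ and $\bwd^{e}S^{e}\simeq S$ is a reasonable clarification of a point the paper leaves implicit.
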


\begin{proof}
(a) $\apl{\bwd^{e}\pi}{\bwd^{e}E}{\bwd^{e}E/\tau_R(E)}$ is an epimorphism, hence $\im \bwd^e f = \im \bwd^e \ol{f}$ and (a) follows.
\smallbreak

(b) The homomorphism $f$ induces the commutative diagram
$$\xymatrix{ & E\oxr S \ar@{->>}[dl]_{\pi'} \ar[dr]^{f \ox \Id} & \\
E\oxr S/\tau_S(E\oxr S) \ar@{->}[rr]^{\ol{f\ox \Id}} & & R^e \oxr S \simeq S^e}$$
The exterior powers commute with base extensions, and so applying $\bwd^e$ we get commutative diagrams
\begin{equation*} \label{diag2}
\xymatrix{
\bwd^e E \ar[r]^{\bwd^e f} \ar[d] & \bwd^eR^e \simeq R \ar[d] \\
\bwd^e E \oxr S\ar[r]^{(\bwd^e f) \ox \Id} \ar[d]_{\simeq} &  \bwd^eR^e \oxr S \simeq S \ar[d]^{\simeq}\\
\bwd^e (E \oxr S) \ar[r]^{\bwd^e (f \ox \Id)}\ar@{->>}[d]_{\bwd^e \pi'}  &  \bwd^e(R^e \oxr S)\simeq S \\
  \bwd^e\big(E\oxr S)/\tau_R(E\oxr S)\big)\ar@{->}[ur]_{\bwd^e \ol{f \ox \Id}} &
}
\end{equation*}
Now
$$ \deter_{f\ox \Id}(E\oxr S) = \im \tbwd{e}(f \ox \Id) = (\im \tbwd{e} f)\cdot S = \deter_f(E)\cdot S.$$
\smallbreak
(c) follows by (a) and (b).
\end{proof}

We say that $f$ has rank if $\im f$ has rank. In this case, also $\ker f \subset E$ has rank. Assume that $E$ and $f$ have both rank $e$ and put $Q=\Quot(R)$ for the total ring of fractions of $R$. Then, $\rank(f\ox \Id)=e$ and $f \ox \Id : E \oxr Q \lra R^{e} \oxr Q$ is an isomorphism. Moreover, the homomorphism $\apl{\ol{f}}{E/\tau_R(E)}{R^e}$ is injective. This is a consequence of the following observation.

\begin{lemma} \label{lem1}
Let $R$ be a \nor. Let $E$ be a \fgtfrmo having rank $e$ and $\apl{f}{E}{R^e}$ an $R$-homomorphism with $\rank f=e$. Then $f$ is a monomorphism.
\end{lemma}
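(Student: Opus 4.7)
The plan is to pass to the total ring of fractions $Q = \Quot(R)$ and combine flatness of localization with a rank comparison. Setting $K = \ker f$, I would begin from the tautological short exact sequence
\[
0 \lra K \lra E \xrightarrow{f} \im f \lra 0.
\]

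Since the localization $R \to Q$ is flat, tensoring preserves exactness and yields
\[
0 \lra K \oxr Q \lra E \oxr Q \lra (\im f) \oxr Q \lra 0.
\]
By hypothesis $E$ has rank $e$ and, using the convention introduced just before the lemma, $f$ having rank $e$ means $\im f$ has rank $e$. Hence both $E \oxr Q$ and $(\im f) \oxr Q$ are free $Q$-modules of rank $e$. A surjection $Q^e \twoheadrightarrow Q^e$ between two free modules of the same finite rank over a commutative ring is automatically an isomorphism (the matrix of such a surjection admits a right inverse, so its determinant is a unit), and therefore $K \oxr Q = 0$.

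To conclude I would invoke torsion-freeness: $K$ is a submodule of the torsion-free module $E$, so $K$ is itself torsion-free, which means the canonical localization map $K \lra K \oxr Q$ is injective. Combined with $K \oxr Q = 0$, this forces $K = 0$ and $f$ is a monomorphism.

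I do not foresee any genuine obstacle. The argument is a short assembly of three standard ingredients (flatness of $R \to Q$; surjections between free modules of equal finite rank are isomorphisms; and $M \hra M \oxr Q$ when $M$ is torsion-free), and the only point one must check carefully is that the relevant ranks carry over after tensoring with $Q$, which is exactly what the definition of rank guarantees.
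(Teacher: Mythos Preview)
Your proposal is correct and follows essentially the same approach as the paper's proof: both pass to $Q=\Quot(R)$, use the rank hypotheses to see that $\ker f \oxr Q = 0$, and then invoke torsion-freeness of $E$ to conclude $\ker f = 0$. The paper compresses the argument into two lines (writing $E\oxr Q \simeq f(E)\oxr Q \simeq Q^e$ and $\ker f \subseteq \tau_R(E)=0$), while you spell out the exact sequence and the ``surjection between free modules of equal rank is an isomorphism'' step more carefully, but there is no substantive difference.
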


\begin{proof}
We have that $E\oxr Q \simeq f(E)\oxr Q \simeq Q^e$ and so $\ker f \oxr Q = 0$. This implies that $\ker f \subseteq \tau_{R}(E)=0$, proving that $f$ is a monomorphism.
\end{proof}

\subsection{Determinants and Fitting ideals}

In order to see determinants as Fitting ideals we fix some notation.
For a positive integer $n$, we set $[n] = \{ 1,\ldots, n \}.$
Let $H=\{ \sek{i}{h} \}$ be a subset of $[n]$ and suppose that
$i_{1} < i_{2} < \cdots < i_{h}$. We write
\begin{equation} \label{xH}
x_{H}= x_{i_{1}} \wdg x_{i_{2}} \wdg \cdots \wdg x_{i_{h}}.
\end{equation}
Let $A = (\alp_{ij})$ be an $n \x m$ matrix over a ring $R$.
For subsets $\{i_{1} < \cdots < i_{r}\} \sube [n]$ and $\{j_{1} < \cdots < j_{s}\} \sube [m]$ we write
$$ A[\sek{i}{r} \mid \sek{j}{s}] =
\begin{bmatrix}
    \alp_{i_{1},j_{1}} & \cdots & \alp_{i_{1},j_{s}}  \\
    \vdots &  & \vdots  \\
    \alp_{i_{r},j_{1}} & \cdots & \alp_{i_{r},j_{s}}
\end{bmatrix}. $$
Suppose that $E$ is generated by $\sek{x}{n}$. Suppose $m \leq n$. Hence $\{x_{H} \colon H \in \cP_{m}([n])\}$, where $\cP_{m}([n])$
denotes the set of all subsets of $[n]$ with $m$ elements, is the
corres\-pon\-ding generating set of $\bwd^m E$.
%
If $u_{j} = \sum_{i=1}^n \alp_{ij} x_{i}$ for $1\leq j \leq m$, then
\begin{equation}\label{pext1}
     u_{1} \wdg \cdots \wdg u_{m} = \sum_{H= \{ i_{1} < \cdots <
     i_{m} \} \in \cP_{m}([n])} \deter A[\sek{i}{m}|1,\ldots,m]
     x_{H}.
\end{equation}
Now, let $\seq{v}{e}$ be any basis of $R^{e}$. Hence $\bwd^{e}R^{e}= \langle v_{1} \wdg \cdots \wdg v_{e} \rangle \simeq R$. Let $\apl{f}{E}{R^{e}}$ be an $R$-homomorphism and suppose that $f(x_{i_{j}}) = \sum_{k=1}^{e}\alpha_{kj}v_{k}$ and put $A=(\alp_{ij})$.
Therefore
\begin{align*}
\im \tbwd{e}f & = \langle f(x_{i_{1}}) \wdg f(x_{i_{2}}) \wdg \cdots \wdg f(x_{i_{e}}) \colon i_{1} < i_{2} < \cdots < i_{e}\rangle \\
& = \langle \deter A[\sek{i}{e}|1,\ldots,e]\, v_{1}\wdg \cdots \wdg v_{e} \colon i_{1} < i_{2} < \cdots < i_{e}\rangle \\
& = \langle \deter A[\sek{i}{e}|1,\ldots,e] \colon i_{1} < i_{2} < \cdots < i_{e}\rangle \cdot \tbwd{e}R^{e}.
\end{align*}
It follows that $\deter_{f}(E)$ is the ideal generated by the $e \x e$ minors of $A=(\alp_{ij})$.
Moreover, since $E$ is generated by $n$ elements, we have a natural epimomorphism $R^{n} \tra E$. Hence
$$\xymatrix{R^{n} \ar@{->>}[r]\ar@{->}_{\psi}@/_{0.8pc}/[rr] & E \ar[r]^{\!\!\!\!\!\!\!\!\!f} & R^e=G \ar[r] & G/\im f}$$
is a finite presentation of $G/\im f$.
Therefore $F_0(G/\im f)=I_e(\psi)$ and we have
\begin{equation} \label{iguald}
\deter_f(E) = F_0(G/\im f).
\end{equation}

Now assume that $E$ has rank $e$ and let $Q = \Quot(R)$ be the total ring of fractions of $R$. Hence we have natural ho\-mo\-mor\-phisms
$$\tbwd{e} E \ra \big(\tbwd{e} E\big) \oxr Q  \simeq \tbwd{e} (E \oxr Q) \simeq \tbwd{e} Q^e \simeq Q.$$
Suppose also that $\apl{f}{E}{R^{e}}$ is an $R$-homomorphism such that $\im f$ has rank $e$.
(Note that if $E$ has rank $e$ one can always find such an $R$-homomorphism $f$. Also, that this is the case if $f: E \hra R^e$ is an embedding.)
Hence
we have a natural Q-isomorphism
$$ Q^e \simeq E \oxr Q \overset{f \ox \Id}{\lra} R^{e} \oxr Q \simeq Q^e $$
and natural homomorphisms
$$ \tbwd{e} E \overset{\bwd^{e} f}{\lra} \tbwd{e} R^e \hra \big(\tbwd{e} R^e\big) \oxr Q \simeq \tbwd{e} (R^e \oxr Q) \simeq \tbwd{e} Q^e \simeq Q. $$
Therefore, we get commutative diagrams
\begin{equation} \label{diag}
\xymatrix{
\bwd^e E \ar[rr]^{\bwd^e f} \ar[d] \ar@{->}@/_{4pc}/[dddd]_{\rho} & & \bwd^eR^e \ar[d] \ar@{->}@/^{4.2pc}/[dddd]^{\gamma}\\
(\bwd^e E) \oxr Q\ar[rr]^{(\bwd^e f) \ox \Id} \ar[d]_{\simeq} & & (\bwd^eR^e) \oxr Q \ar[d]^{\simeq}\\
\bwd^e (E \oxr Q) \ar@{->}[rr]^{\bwd^e(f \ox \Id)} \ar[d]_{\simeq} & & \bwd^e(R^e \oxr Q) \ar[d]^{\simeq}\\
\bwd^e Q^e \ar[rr] \ar[d]_{\simeq} & & \bwd^eQ^e \ar[d]^{\simeq}\\
Q \ar[rr]  & & Q
}
\end{equation}
where $(\bwd^e f) \ox \Id$ is an isomorphism. This implies that $\ker(\bwd^e f) \subseteq \tau_{R}(\bwd^{e}E)$.
Moreover, $\bwd^{e}R^{e}$ is torsionfree and so $\ker(\bwd^e f)=\tau_{R}(\bwd^e E)$ is independent of $f$. Therefore one has
\begin{equation}\label{e1}
\deter_f(E) \simeq \tbwd{e} E / \tau_{R}(\tbwd{e} E).
\end{equation}
This independence allow us to forget the homomorphism $f$. In fact, in \cite{wv2} the ideal $\deter_{f}(E)$ is denoted by $\deter_0(E)$. This ideal is, then, an invariant of $E$, called the {\it order determinant of $E$}, and we have
\begin{equation} \label{allideals0}
\deter_0(E):=\deter_f(E),
\end{equation}
where $\apl{f}{E}{R^{e}}$ is an $R$-homomorphism with $\rank f=e$.
Therefore
\begin{equation} \label{allideals}
\deter_0(E)=\deter_f(E)=F_0(G/\im f) = \im \tbwd{e} f \simeq \tbwd{e} E / \tau_{R}(\tbwd{e} E).
\end{equation}
(To be more precise, we should define $\deter_0(E)$ as the class of fractional ideals isomorphic to $\tbwd{e} E / \tau_{R}(\tbwd{e} E)$, but for the purposes in this paper we prefer to think $\deter_0(E)$ as any ideal of $R$ in this class of the form $\deter_f(E)$, with $f$ of rank $e$. Note that since $\deter_f(E) = I_e(\psi)$ with $\rank \psi = e$, then $\deter_f(E)$ has positive grade.)

\smallskip

Assume that $E$ is a \fgtfrmo having rank $e >0$. The modules $\bwd^{e}E$ and $\bwd^{e}E/\tau_{R}(\bwd^{e}E)$ have the same Rees algebra. Therefore, the above approach allows us, by means of the equalities in Eq. (\ref{allideals}), to deduce a lower bound for the ana\-ly\-tic spread $\ell\big(\bwd^{e}E\big)$. In particular, we obtain an affirmative answer for ideal modules to the  question posed in \cite{su}. We note that ideal modules are orientable (which means that $\big(\bwd^{e}E\big)^{**}\simeq R$); in this case an affirmative answer is given in \cite[Corollary~3.2]{su} under the assumption that $R$ is a local equidimensional and universally catenary ring.

\begin{proposition}\label{zak}
Let $R$ be a \nol{} and $E \sube G \simeq R^e$ a
   \fgtfrmo having rank $e >0$, but not free. Then
$$ \ell\big(\tbwd{e}E\big) \geq \Ht F_{0}(G/E). $$
If $E$ is an ideal module then
$$ \ell\big(\tbwd{e}E\big) \geq \Ht F_{e}(E). $$
\end{proposition}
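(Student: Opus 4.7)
The plan is to reduce both inequalities to the classical bound $\ell(I)\geq \Ht I$ for a proper ideal $I$ in a Noetherian local ring, via the chain of identifications already assembled in Eq.~(\ref{allideals}). Recall that this classical inequality follows from the standard minimal-reduction argument: after (harmlessly) passing to $R(X)=R[X]_{\fm R[X]}$ if the residue field is finite, $I$ admits a minimal reduction $J$ with $\mu(J)=\ell(I)$ and $\sqrt{J}=\sqrt{I}$, and then Krull's height theorem gives $\Ht I=\Ht J\leq\mu(J)=\ell(I)$.

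For the first inequality, I would apply Eq.~(\ref{allideals}) to the given embedding $f\col E\hra G\simeq R^{e}$, which has rank $e$, obtaining
\[
F_{0}(G/E)=\deter_{f}(E)\simeq \tbwd{e}E/\tau_{R}(\tbwd{e}E)
\]
as fractional ideals of $R$, where $F_{0}(G/E)$ is a proper ideal because $E$ is not free. The crucial technical point is then that $\CR(\bwd^{e}E)\simeq \CR(\bwd^{e}E/\tau_{R}(\bwd^{e}E))$: the canonical surjection $\bwd^{e}E\tra \bwd^{e}E/\tau_{R}(\bwd^{e}E)$ induces a surjection of symmetric algebras whose kernel, being generated by torsion elements of $\bwd^{e}E$, is contained in $\tau_{R}(\cS(\bwd^{e}E))$; hence it is absorbed by the torsion quotient appearing in the definition $\CR(M)=\cS(M)/\tau_{R}(\cS(M))$ given in the introduction. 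Passing to fiber cones yields
\[
\ell(\tbwd{e}E)=\ell(\tbwd{e}E/\tau_{R}(\tbwd{e}E))=\ell(F_{0}(G/E))\geq \Ht F_{0}(G/E).
\]

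For the ideal module case, the identity $V(F_{e}(E))=V(F_{0}(G/E))$ recorded in Eq.~(\ref{FeF0}) forces $\Ht F_{e}(E)=\Ht F_{0}(G/E)$, so the second inequality follows immediately from the first. The step I expect to demand the most care is the Rees-algebra compatibility used above, i.e.\ checking that $\CR(\bwd^{e}E)$ really is insensitive to killing the torsion submodule of $\bwd^{e}E$; once this is settled, the rest is bookkeeping of the isomorphisms collected in Eq.~(\ref{allideals}) together with the standard inequality relating the analytic spread and the height of an ideal.
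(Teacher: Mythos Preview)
Your proposal is correct and follows essentially the same route as the paper: the paper's proof is the terse chain $\ell(\bwd^{e}E)=\ell(\bwd^{e}E/\tau_{R}(\bwd^{e}E))=\ell(\deter_{0}(E))=\ell(F_{0}(G/E))\geq \Ht F_{0}(G/E)=\Ht F_{e}(E)$, invoking Eq.~(\ref{allideals}) and Eq.~(\ref{FeF0}) exactly as you do. You simply make explicit the two points the paper leaves implicit, namely the Rees-algebra invariance under killing torsion (stated just before the proposition) and the standard inequality $\ell(I)\geq \Ht I$.
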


\begin{proof}
In this case $\im f = E$ and we have
\begin{align*}
\ell\big(\tbwd{e}E\big) & = \ell\big(\tbwd{e}E\big/\tau_{R}\big(\tbwd{e}E\big)\big) = \ell(\deter_{0}(E)) = \ell(F_0(G/E)) \\
& \geq \Ht(F_0(G/E)) = \Ht F_{e}(E)
\end{align*}
- the last equality holds in the case where $E$ is an ideal module (by (\ref{FeF0})).
\end{proof}

\subsection{The norm of a module}

Let $E$ be a finitely generated $R$-module having rank $e$. O. Villamayor attached in \cite{ov} a class of ideals to $E$ called the norm of $E$ and denoted by $[[E]]_R$, and showed that the blow up at any representant of this class has a universal flattening property.
According to \cite{ov}
\begin{equation}\label{e2}
[[E]]_{R}:= \im \big(\tbwd{e} E \ra Q \simeq \tbwd{e}E \oxr Q \big)
\end{equation}
and, any fractional ideal isomorphic to this one is a representative of $[[E]]_{R}$. In particular, $[[E]]_{R}= \im \rho$, where $\rho$ is as in the diagram (\ref{diag}). Moreover, since $\im \rho \simeq \gamma(\im  \bwd^e f)$ then considered $\deter_{0}(E)$ as fractional ideal we get
\begin{equation}\label{e3}
[[E]]_{R} \simeq \deter_{0}(E).
\end{equation}

Let $X=\spec(R)$. Then the blow up of $X$ at $E$, denoted by $\Bl_{E}(X)$, is defined as the blow up of $X$ with respect to any representative of $[[E]]_{R}$. In particular, we have that
$$ \Bl_{E}(X) \simeq \Proj \CR[\deter_{0}(E)t] = \Proj\left(\bop_{n\geq 0}\deter_{0}(E)^{n}t^{n}\right).$$

The following construction is done in \cite{ov} to determine another ideal representing $[[E]]_{R}$: Since $E$ has rank $e$ we can choose an $n \x (n-e)$ submatrix $\varphi'$ of  $\varphi$, in (\ref{pE}), with rank $(n-e)$, that is having a non-zero divisor $(n-e)\x(n-e)$ minor. Hence there exists a free $R$-submodule $M$ of $\ker \phi$ having rank $(n-e)$. The module
$E_1:=R^{n}/M = \coker(R^{n-e} \overset{\varphi'}{\ra} R^{n})$
is an \rmo{} with a finite free presentation $0 \ra R^{n-e} \ra R^n \ra E_{1} \ra 0$. Moreover, the module $E_{1}$ has rank $e$, there is a natural surjection $E_{1} \overset{\nu}{\tra} E$ and $E_{1}/\tau_{R}(E_{1}) \simeq E/\tau_{R}(E)$. Further, by Properties \ref{dfprop},
$$ [[E]]_{R} \simeq \deter_{0}(E) = \deter_{0}(E/\tau_{R}(E)) = \deter_{0}(E_{1}/\tau_{R}(E_{1})) = \deter_{0}(E_{1}) \simeq [[E_{1}]]_{R}.$$
Therefore, the blow ups at $E$ and $E_{1}$ are isomorphic. Moreover, by \cite[Proposition~2.5]{ov}, the Fitting ideal $F_{e}(E_{1})$ is a representative of $[[E_{1}]]_{R}$ as a fractional ideal. Hence
$$ \Bl_{E}(X) \simeq \Proj \CR[F_{e}(E_{1})t] = \Proj\left(\bop_{n\geq 0}F_{e}(E_{1})^{n}t^{n}\right).$$
Note that the surjection $\nu$ implies an inclusion of the respective Fitting ideals, and so we have
\begin{equation} \label{e4}
[[E]]_{R} \simeq [[E_{1}]]_{R} \simeq F_{e}(E_{1}) \subseteq F_{e}(E).
\end{equation}

Using (\ref{e4}) we obtain another inequality for the analytic spread of $\tbwd{e}E$.

\begin{proposition}\label{zak2}
Let $R$ be a \nol{} and $E$ a
   \fgrmo{} having rank $e >0$, but not free. Then, there exists a module $E_1$ of projective dimension $1$ having rank $e$ such that $E_{1}/\tau_{R}(E_{1}) \simeq E$, and
$$ \ell\big(\tbwd{e}E\big) \geq \Ht F_{e}(E_{1}). $$
\end{proposition}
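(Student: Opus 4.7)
The plan is to carry out the Villamayor construction described in the paragraph immediately preceding the statement and then string together the isomorphisms of fractional ideals already established in this section. Starting from a finite free presentation $R^m \overset{\varphi}{\ra} R^n \overset{\phi}{\ra} E \ra 0$, I would select an $n\x(n-e)$ submatrix $\varphi'$ of $\varphi$ of rank $n-e$ and set $E_1:=\coker \varphi'$. The induced resolution $0 \to R^{n-e} \overset{\varphi'}{\ra} R^n \to E_1 \to 0$ shows that $E_1$ has rank $e$ and $\pjd E_1 \leq 1$. The natural surjection $E_1 \tra E$ produces, as recalled above, an isomorphism $E_1/\tau_R(E_1) \simeq E$; since $E$ is not free by hypothesis, $E_1$ cannot be free either (otherwise $E_1 = E_1/\tau_R(E_1) \simeq E$ would be free), hence $\pjd E_1 = 1$ exactly.

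Next, I would combine the identifications \eqref{allideals}, \eqref{e3} and \eqref{e4} to obtain, as fractional ideals, the chain
$$ \tbwd{e}E / \tau_R(\tbwd{e}E) \simeq \deter_0(E) \simeq [[E]]_R \simeq [[E_1]]_R \simeq F_e(E_1). $$
Because isomorphic fractional ideals have isomorphic Rees algebras as graded $R$-algebras, they share the same analytic spread. Combining this with the observation already used in the proof of \refp{zak} that $\bwd^e E$ and its torsion-free quotient $\bwd^e E/\tau_R(\bwd^e E)$ have the same Rees algebra, I would conclude
$$ \ell\big(\tbwd{e}E\big) = \ell\big(\tbwd{e}E/\tau_R(\tbwd{e}E)\big) = \ell(F_e(E_1)). $$

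Finally, I would invoke the standard bound $\ell(J) \geq \Ht J$ valid for any proper ideal $J$ in a Noetherian local ring (an immediate consequence of Krull's Hauptidealsatz applied to a minimal reduction of $J$) to deduce $\ell(\bwd^e E) \geq \Ht F_e(E_1)$. The main obstacle in this plan is really a bookkeeping one: one must check carefully that the identifications of fractional ideals translate into equalities of analytic spreads, and that $\pjd E_1$ is exactly $1$ (not $0$) so that the statement is non-trivial. Once these two checks are in place, the result follows by concatenating facts already proved in this section together with the Krull-type inequality $\ell \geq \Ht$.
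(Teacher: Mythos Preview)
Your proposal is correct and follows essentially the same route as the paper: construct $E_1$ via Villamayor's procedure, then chain the isomorphisms $\tbwd{e}E/\tau_R(\tbwd{e}E)\simeq\deter_0(E)\simeq[[E]]_R\simeq F_e(E_1)$ to get $\ell(\tbwd{e}E)=\ell(F_e(E_1))\geq\Ht F_e(E_1)$. Your extra care in checking $\pjd E_1=1$ exactly (rather than $\leq 1$) is a detail the paper leaves implicit; note, though, that your argument for this uses $E_1/\tau_R(E_1)\simeq E$, which strictly speaking requires $E$ itself to be torsionfree---in general the construction only gives $E_1/\tau_R(E_1)\simeq E/\tau_R(E)$, so the clean conclusion $\pjd E_1=1$ needs $E/\tau_R(E)$ not free rather than $E$ not free.
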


\begin{proof}
Let $E_1$ constructed as above. Then
\begin{align*}
\ell\big(\tbwd{e}E\big) & = \ell\big(\tbwd{e}E\big/\tau_{R}\big(\tbwd{e}E\big)\big) = \ell(\deter_{0}(E)) = \ell([[E]]_{R}) \\
& = \ell(F_{e}(E_{1})) \geq \Ht F_{e}(E_{1}),
\end{align*}
as asserted.
\end{proof}

In particular, if $\pjd E\leq 1$ then $E=E_{1}$, and we get another affirmative answer for the problem mentioned in \cite{su}, without additional assumptions on $R$.

\begin{corollary}\label{zak3}
Let $R$ be a \nol{} and $E$ a \fgr{} having rank $e$ with $\pjd E= 1$. Then
$$ \ell\big(\tbwd{e}E\big) \geq \Ht F_{e}(E). $$
\end{corollary}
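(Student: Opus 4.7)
The plan is to reduce the statement directly to Proposition \ref{zak2} by showing that, under the hypothesis $\pjd E = 1$, the auxiliary module $E_{1}$ constructed in the discussion preceding that proposition can be chosen to coincide with $E$ itself.

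First I would use the standing hypotheses. Since $R$ is a Noetherian local ring and $\pjd E = 1$, a minimal free resolution of $E$ has the form
\begin{equation*}
0 \lra R^{m} \overset{\varphi}{\lra} R^{n} \overset{\phi}{\lra} E \lra 0.
\end{equation*}
Because $E$ has rank $e$, comparing ranks in this exact sequence yields $m = n - e$. Thus $\varphi$ is already an $n \times (n-e)$ matrix.

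Next I would revisit the construction of $E_{1}$ that precedes Proposition \ref{zak2}. There, starting from any finite free presentation $R^{m} \overset{\varphi}{\ra} R^{n} \overset{\phi}{\ra} E \ra 0$, one selects an $n \times (n-e)$ submatrix $\varphi'$ of $\varphi$ of rank $n-e$, and sets $E_{1}=\coker\varphi'$. In our situation $\varphi$ itself is of size $n \times (n-e)$; moreover, since $\varphi$ is injective with image $\ker\phi$ of rank $n-e$, a standard fact on free resolutions (the Buchsbaum--Eisenbud/Hilbert--Burch type observation) ensures that $\grd I_{n-e}(\varphi) \geq 1$, so $\varphi$ has rank $n-e$ in the sense required. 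Therefore the admissible choice $\varphi' = \varphi$ is available and yields $E_{1} = \coker \varphi = E$.

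Finally, with $E_{1} = E$ we have $F_{e}(E_{1}) = F_{e}(E)$, and Proposition \ref{zak2} directly gives
\begin{equation*}
\ell\bigl(\tbwd{e}E\bigr) \geq \Ht F_{e}(E_{1}) = \Ht F_{e}(E),
\end{equation*}
which is the claimed inequality. The only point requiring even a moment of thought is the verification that $\varphi$ in the minimal resolution of $E$ satisfies the rank condition imposed in the construction of $E_{1}$; this is immediate once one recalls that the maximal minors of the first syzygy matrix of a module of positive rank generate an ideal of positive grade, so there is no real obstacle.
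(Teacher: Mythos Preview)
Your proposal is correct and follows exactly the approach the paper takes: the paper remarks just before the corollary that ``if $\pjd E\leq 1$ then $E=E_{1}$'', and then Proposition~\ref{zak2} gives the inequality with $F_e(E_1)=F_e(E)$. Your write-up simply makes explicit why $E_1=E$ (via the minimal free resolution and the rank of $\varphi$), which the paper leaves implicit.
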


The module $E_1$ is also useful to prove the equality of the first non-zero Fitting ideals of $E$ and $G/E$, for ideal modules having projective dimension equal to one. We note that this equality was proved in \cite{hu} in the case where $R$ is a $2$-dimensional regular local ring.

\begin{proposition} \label{idmpj1}
Let $R$ be a \nor{} and $E \subseteq G \simeq R^e$ a \fgtfrmo having rank $e >0$. Then, up to isomorphism,
$$ F_0(G/E) \subseteq F_{e}(E). $$
Moreover, if  $E$ is an ideal module with $\pjd E=1$ then
$$  F_0(G/E)=F_e(E).$$
\end{proposition}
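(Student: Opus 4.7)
The plan is to chain together the identifications already established in Section~\ref{div}.

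For the first inclusion, let $f\colon E \hookrightarrow G$ denote the given embedding, so that $\im f = E$. By Eq.~(\ref{iguald}) we have $F_{0}(G/E) = \deter_{f}(E)$, and by Eq.~(\ref{allideals0}) this equals $\deter_{0}(E)$. Applying Eq.~(\ref{e3}) gives $\deter_{0}(E) \simeq [[E]]_{R}$, and Villamayor's construction recalled in Eq.~(\ref{e4}) produces
\[
[[E]]_{R} \simeq [[E_{1}]]_{R} \simeq F_{e}(E_{1}) \subseteq F_{e}(E).
\]
Composing these identifications yields the claimed containment $F_{0}(G/E) \subseteq F_{e}(E)$ up to isomorphism.

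For the moreover part, suppose $\pjd E = 1$. Then $E$ admits a free resolution $0 \to R^{n-e} \xrightarrow{\varphi} R^n \xrightarrow{\phi} E \to 0$ in which $\varphi$ is already of size $n \times (n-e)$ and of rank $n-e$, being injective on a free module. In the construction of $E_{1}$ we may therefore take $\varphi' = \varphi$, which forces $E_{1} = R^n/\im\varphi = E$ and in particular $F_{e}(E_{1}) = F_{e}(E)$. Combined with the first part, this gives $F_{0}(G/E) \simeq F_{e}(E)$. To upgrade this isomorphism to an equality of ideals, compose $\phi$ with $f$ to obtain an exact sequence $0 \to R^{n-e} \xrightarrow{\varphi} R^n \xrightarrow{A} R^e$ with $A = f\circ\phi$, which extends to a length-two free resolution of $G/E$. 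The ideal-module assumption $\grd G/E \geq 2$ makes $G/E$ perfect of grade~$2$, and in this perfect setting the generalized Hilbert-Burch identity yields $I_{e}(A) = I_{n-e}(\varphi)$, i.e., $F_{0}(G/E) = F_{e}(E)$.

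The principal obstacle is this last identity $I_{e}(A) = I_{n-e}(\varphi)$: the chain of isomorphisms produced earlier in the section only identifies the two Fitting ideals as fractional ideals, and promoting this to a literal equality of ideals of $R$ requires the perfect-grade-$2$ structure of $G/E$. This generalizes the classical Hilbert-Burch theorem (the case $e=1$, where $F_{0}(R/I) = I = I_{n-1}(\varphi)$) and recovers Huneke's equality for a $2$-dimensional regular local ring in \cite{hu}.
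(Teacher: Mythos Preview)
Your argument for the first inclusion is identical to the paper's: chain Eq.~(\ref{iguald}), (\ref{e3}), (\ref{e4}) to get $F_0(G/E)=\deter_0(E)\simeq[[E]]_R\simeq F_e(E_1)\subseteq F_e(E)$.

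For the equality when $\pjd E=1$, you and the paper agree on the first step: one may take $E_1=E$, so $F_e(E_1)=F_e(E)$ and hence $F_0(G/E)\simeq F_e(E)$ as fractional ideals. Where you diverge is in upgrading this isomorphism to an equality. The paper does not build the length-two resolution of $G/E$ or invoke any Hilbert--Burch-type structure theorem. Instead it uses only that $E$ is an ideal module, so by Eq.~(\ref{FeF0}) both $F_0(G/E)$ and $F_e(E)$ have grade $\geq 2$; and two isomorphic $R$-ideals of grade $\geq 2$ are literally equal, since the isomorphism is multiplication by an element of $I^{-1}=R$ (this is the standard consequence of $\Ext^1_R(R/I,R)=0$). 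That is the whole argument.

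Your route via the ``generalized Hilbert--Burch identity'' $I_e(A)=I_{n-e}(\varphi)$ is not wrong, but it is heavier and you leave it unproven. The statement you need is a special case of the Buchsbaum--Eisenbud first structure theorem: for the exact complex $0\to R^{n-e}\xrightarrow{\varphi}R^n\xrightarrow{A}R^e$ one always has $I_e(A)=\alpha\cdot I_{n-e}(\varphi)$ for some $\alpha\in R$, and showing that $\alpha$ is a unit requires precisely the grade-$\geq 2$ hypothesis on $I_e(A)$---which brings you back to the same reflexivity argument the paper uses directly. So your detour ultimately rests on the same fact, just packaged inside a bigger theorem; the paper's version is shorter and self-contained.
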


\begin{proof}
Let $E_1$  constructed as above. Using Eq. (\ref{iguald}), (\ref{e3}) and (\ref{e4}) we get
$$ F_0(G/E) = \deter_0(E) \simeq [[E]]_{R} \simeq F_{e}(E_{1}) \subseteq F_{e}(E). $$
Now, if $\pjd E =1$ then $F_{e}(E_{1}) = F_{e}(E)$. Moreover, if $E$ is an ideal module then
$$ \grd F_0(G/E)=\grd F_e(E) = \grd G/E \geq 2, $$
and so $F_0(G/E)=F_e(E)$, as asserted.
\end{proof}

\subsection{The universal property of the blow up at a module}

Next we show that the blow up at the order determinant $\deter_{0}(E)$ has an universal flattening property. This gives an alternative to \cite[Theorem 3.3]{ov}.

\smallskip

To do this we begin to prove an easy consequence of Lipman's Theorem (see \cite[Theorem~D.18]{kunz2}).

\begin{proposition} \label{lip}
Let $R$ be a \nol{} and $E \subseteq G \simeq R^e$ a \fgtfrmo having rank $e >0$. Then the following are equivalent:
\begin{enumerate}
\item $E$ is free.
\item $F_0(G/E)$ is a principal ideal (generated by a non-zero divisor of $R$).
\item $\deter_0(E)$ is invertible.
\end{enumerate}
\end{proposition}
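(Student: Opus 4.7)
The plan is to prove (a) $\Rightarrow$ (b) $\Rightarrow$ (c) $\Rightarrow$ (a). The key preliminary observation is that, taking $f$ to be the inclusion $E \hra G$, Eq. (\ref{iguald}) (together with (\ref{allideals})) yields an honest equality of ideals $\deter_{0}(E) = F_{0}(G/E)$ in $R$, so conditions (b) and (c) concern the very same ideal. Since an ideal generated by a non-zero divisor is always invertible, and conversely an invertible ideal in a Noetherian local ring is principal and (in view of the remark after Eq. (\ref{allideals}) that $\deter_{0}(E)$ has positive grade) is generated by a non-zero divisor, the equivalence (b) $\LRa$ (c) follows at once.

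For (a) $\Rightarrow$ (b): assuming $E$ is free of rank $e$, pick a basis of $E$ and let $A$ be the $e \x e$ matrix expressing the embedding $E \hra G$ with respect to bases of $E$ and $G$. Then $G/E = \coker A$ is presented by the square matrix $A$, hence $F_{0}(G/E) = (\deter A)$ is principal. Since $E \oxr Q \ra G \oxr Q$ is an isomorphism of rank-$e$ free modules over $Q = \Quot(R)$, $\deter A$ is a unit in $Q$, and therefore a non-zero divisor of $R$.

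The main step is (c) $\Rightarrow$ (a), where Lipman's Theorem enters. The relevant formulation (as in \cite[Theorem~D.18]{kunz2}) provides a Fitting-ideal criterion for freeness of a submodule: for an inclusion $N \hra M$ of finitely generated modules over a Noetherian local ring with $M$ free, if $F_{0}(M/N)$ is invertible then $N$ itself is free. Applying this to the short exact sequence $0 \ra E \ra G \ra G/E \ra 0$ with $G$ free, the invertibility of $F_{0}(G/E) = \deter_{0}(E)$ forces $E$ to be free. The main obstacle I anticipate is pinning down precisely the version of Lipman's Theorem to invoke; once correctly quoted, the implication (c) $\Rightarrow$ (a) is essentially immediate from the identification $\deter_{0}(E) = F_{0}(G/E)$.
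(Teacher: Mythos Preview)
Your proposal is correct and follows essentially the same route as the paper: both arguments identify $\deter_{0}(E)=F_{0}(G/E)$ via Eq.~(\ref{iguald}) to get (b)$\LRa$(c), and both invoke Lipman's Theorem \cite[Theorem~D.18]{kunz2} for the substantive implication back to freeness of $E$. The one difference is in (a)$\Ra$(b): the paper applies Lipman's Theorem there as well (using that $G/E$ is torsion with $\pjd G/E \leq 1$ to conclude $F_{0}(G/E)$ is principal generated by a non-zero divisor), whereas you give the direct computation $F_{0}(G/E)=(\deter A)$ from a square presentation matrix, which is more elementary and avoids the citation in that direction. For (c)$\Ra$(a), note that the precise form of Lipman's Theorem in \cite{kunz2} yields $\pjd G/E \leq 1$ from the hypothesis on $F_{0}(G/E)$; the conclusion that $E$ is free then follows (as the paper does) from the exact sequence $0 \ra E \ra G \ra G/E \ra 0$ with $G$ free, so your packaging of the theorem as ``$N$ is free'' is a harmless one-step shortcut rather than a different statement.
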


\begin{proof}
``(a) $\Rightarrow$ (b)'' If $E$ is free, we have a natural exact sequence $0 \ra E \ra G \ra G/E \ra 0$ with $E$ and $G$ free $R$-modules. Since $G/E$ has rank $0$ then $\tau_R(G/E)=G/E$ and so (b) follows by Lipman's Theorem.

``(b) $\Leftrightarrow$ (c)'' is clear.

``(b) $\Rightarrow$ (a)'' By Lipman's Theorem, $\pjd G/E \leq 1$, and so $\pjd E =0$ proving that $E$ is free.
\end{proof}

 Under our conditions, the ideal $\deter_0(E)$ behaves well by localization as well as the computation of the inverse. Therefore the ideal $\deter_0(E) \deter_0(E)^{-1}$ defines the non-free locus of $E$.

\begin{corollary} \label{lip2}
Let $R$ be a \nor{} and $E \subseteq G \simeq R^e$ a \fgtfrmo having rank $e >0$. Then
$$ V(\deter_0(E) \deter_0(E)^{-1}) = \spec(R) \setminus V(F_e(E)).$$
\end{corollary}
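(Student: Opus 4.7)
The plan is to verify the identification pointwise by localization, applying \refp{lip} at each stalk together with the standard Fitting-ideal criterion for local freeness. The sentence just before the corollary already flags the key technical input: $\deter_{0}(E)$ and its inverse, viewed as a fractional ideal, both commute with localization, so for every $\mathfrak{p} \in \sR$ one has
\[
(\deter_{0}(E)\,\deter_{0}(E)^{-1})_{\mathfrak{p}} \;=\; \deter_{0}(E_{\mathfrak{p}}) \cdot \deter_{0}(E_{\mathfrak{p}})^{-1},
\]
and $E_{\mathfrak{p}} \subseteq G_{\mathfrak{p}} \simeq R_{\mathfrak{p}}^{e}$ remains a finitely generated torsion-free $R_{\mathfrak{p}}$-module of rank $e$.

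Granting this, the proof reduces to the chain of equivalences: $\mathfrak{p} \notin V(\deter_{0}(E)\deter_{0}(E)^{-1})$ iff the localized product is the unit ideal of $R_{\mathfrak{p}}$, iff $\deter_{0}(E_{\mathfrak{p}})$ is invertible in $R_{\mathfrak{p}}$, iff (by the equivalence (a) $\Leftrightarrow$ (c) in \refp{lip} applied over the local ring $R_{\mathfrak{p}}$) $E_{\mathfrak{p}}$ is a free $R_{\mathfrak{p}}$-module, iff (by the standard Fitting criterion, using that $E$ has rank $e$ and hence $F_{e-1}(E)=0$) $F_{e}(E)_{\mathfrak{p}} = R_{\mathfrak{p}}$, iff $\mathfrak{p} \notin V(F_{e}(E))$. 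Reading off complements yields the identification of loci asserted by the corollary.

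The one genuinely delicate ingredient is the compatibility of $\deter_{0}(E)^{-1}$ with localization and its essential independence from the chosen representative in the fractional-ideal class of $\deter_{0}(E)$. The cleanest setup is to fix a representative $\deter_{f}(E)$ with $\rank f = e$ (always possible by the remark after equation (\ref{allideals})); then $\deter_{f}(E)$ has positive grade, the inverse $J^{-1} = \Hom_{R}(J, R) \subseteq \Quot(R)$ commutes with localization at any prime of $R$, and the product $J\cdot J^{-1} \subseteq R$ depends only on the class of $J$. This is the substance condensed into the phrase ``$\deter_{0}(E)$ behaves well by localization as well as the computation of the inverse'' in the paragraph preceding the statement, and it is the principal hurdle the argument genuinely needs.
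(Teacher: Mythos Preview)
Your approach is essentially identical to the paper's: both localize at a prime $\frp$, use that $\deter_0(E)$ and its inverse commute with localization, and invoke \refp{lip} to identify invertibility of $\deter_0(E_\frp)$ with freeness of $E_\frp$. The paper's chain of equivalences matches yours, though it leaves the Fitting-criterion step (``$E_\frp$ free $\iff \frp \notin V(F_e(E))$'') implicit.

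One remark: your chain already gives $\frp \notin V(\deter_0(E)\deter_0(E)^{-1}) \iff \frp \notin V(F_e(E))$, hence directly $V(\deter_0(E)\deter_0(E)^{-1}) = V(F_e(E))$; the phrase ``reading off complements'' is superfluous and does not actually produce the displayed formula. In fact the displayed equality in the statement appears to carry a typographical slip---both your argument and the paper's own proof establish $V(\deter_0(E)\deter_0(E)^{-1}) = V(F_e(E))$ (the non-free locus), in agreement with the sentence immediately preceding the corollary, rather than $\spec(R)\setminus V(F_e(E))$.
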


\begin{proof}
Let $\frp$ be an $R$-prime ideal. We have
\begin{align*}
E_{\frp} \text{ is free } & \iff \deter_0(E_{\frp}) = \deter_0(E)_{\frp} \text{ is invertible}  \\
& \iff  (\deter_0(E)_{\frp}) (\deter_0(E)_{\frp})^{-1} = R_{\frp} \\
& \iff  (\deter_0(E) \deter_0(E)^{-1})_{\frp} = R_{\frp} \\
& \iff \frp \nsupseteq  (\deter_0(E))(\deter_0(E))^{-1},
\end{align*}
and the equality follows.
\end{proof}

We say that a ring homomorphism $\apl{\varphi}{R}{S}$ is {\it birational} if $\varphi$ induces an isomorphim $\Q(R) \simeq \Q(S)$ between the total ring of fractions.

\begin{proposition} \label{bir}
Let $R$ be a \nor{} and $E$ a \fg{} \rmo{} having rank $e$. Let $\apl{\varphi}{R}{S}$ be a birational homomorphism of rings.
Then
$$ E\oxr S /\tau_{R}(E \oxr S) \text{ is free } \iff \deter_{0}(E) \cdot S \text{ is invertible}. $$
\end{proposition}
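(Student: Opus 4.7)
The plan is to reduce the statement to the already-proven equivalence ``free $\Leftrightarrow$ order determinant invertible'' of \refp{lip} applied over $S$ to the module $\widetilde{E}:=E\oxr S/\tau_R(E\oxr S)$. The two ingredients that make this reduction work are the birationality of $\varphi$ (which identifies $R$- and $S$-torsion) and the base-change formula of \refpp{dfprop}.

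First, because $\varphi\colon R\to S$ is birational, one has a natural iso\-morphism $Q(R)\simeq Q(S)$. For any $S$-module $M$ this yields
\[
M\oxr Q(R)\;\simeq\;M\ox_S Q(S),
\]
so an element of $M$ is $R$-torsion if and only if it is $S$-torsion. In particular $\tau_R(E\oxr S)=\tau_S(E\oxr S)$, hence $\widetilde{E}$ is a torsionfree $S$-module. The same argument applied to the rank shows that $\widetilde{E}$ has $S$-rank equal to $e$.

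Next I would pick an $R$-homomorphism $\apl{f}{E}{R^e}$ of rank $e$ (which exists because $E$ has rank $e$, as noted after \refl{lem1}); then $\det_f(E)=\det_0(E)$. Tensoring with $S$ gives $f\ox\Id\colon E\oxr S\to S^e$, and this homomorphism has rank $e$ as well. By \refl{lem1} applied over $S$, the induced map $\overline{f\ox\Id}\colon \widetilde{E}\hookrightarrow S^e$ is an embedding. Now \refpp{dfprop}(c) and (b) yield
\[
\det_0(\widetilde{E})\;=\;\det_{\overline{f\ox\Id}}(\widetilde{E})
\;=\;\det_{f\ox\Id}(E\oxr S)\;=\;\det_f(E)\cdot S\;=\;\det_0(E)\cdot S,
\]
so the invertibility of $\det_0(E)\cdot S$ in $S$ is exactly the invertibility of $\det_0(\widetilde{E})$.

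Finally, apply \refp{lip} (over $S$, possibly after localizing at a maximal ideal of $S$, since both ``free'' and ``invertible'' can be checked locally) to the embedding $\widetilde{E}\sube S^e$. It gives
\[
\widetilde{E}\text{ is free over }S
\;\Longleftrightarrow\;
\det_0(\widetilde{E})\text{ is invertible in }S,
\]
which combined with the computation above is the desired equivalence. The only subtle point—and the main obstacle—is the identification $\tau_R(E\oxr S)=\tau_S(E\oxr S)$; once this is in hand, everything else is a mechanical application of \refpp{dfprop} and \refp{lip}.
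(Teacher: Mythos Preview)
Your proof is correct and follows essentially the same route as the paper's: pick $f\colon E\to R^e$ of rank $e$, use \refpp{dfprop} to identify $\det_0(E)\cdot S$ with the order determinant of $E\oxr S/\tau(E\oxr S)$, check via \refl{lem1} that $\overline{f\ox\Id}$ is an embedding, and conclude with \refp{lip}. Your explicit verification that $\tau_R(E\oxr S)=\tau_S(E\oxr S)$ and your remark about localizing before invoking \refp{lip} (which is stated only for local rings) are in fact clarifications the paper leaves implicit.
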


\begin{proof}
Since $E$ has rank $e$ there exists an $R$-homomorphism $\apl{f}{E}{R^{e}}$ with $\rank f=e$.
By Properties \ref{dfprop}
\begin{align*}
\deter_{\ol{f \ox \Id}}(E\oxr S /\tau_{R}(E \oxr S)) &=\deter_{f \ox \Id}(E\oxr S) = \deter_{f}(E) \cdot S.
\end{align*}
Now we observe that $E \oxr S$ has rank $e$. In fact,
since $\varphi$ is birational
\begin{align*}
(E \oxr S) \ox_{S} \Q(S) & \simeq E \oxr \Q(S) \simeq E \oxr \Q(R) \ox_{\Q(R)}\Q(S)\\ & \simeq \Q(R)^{e} \ox_{\Q(R)}\Q(S) \simeq \Q(S)^{e}.
\end{align*}
Moreover, we also have
\begin{align*}
\im(\ol{f \ox \Id}) \ox_{S} \Q(S) & = (\im f \oxr S) \ox_{S}\Q(S) \simeq \Q(S)^{e}.
\end{align*}
Therefore $f \ox \Id$ has rank $e$ and, by Lemma \ref{lem1}, $\ol{f \ox \Id}$ is an embedding. Now the result follows by Proposition \ref{lip}.
\end{proof}

Following the notation of \cite{ov}, for a given scheme $(X,\cO_{X})$ we denote by $\ol{\Q(X)}$ the sheaf of total quotient rings of $\cO_{X}$. Then, for a given sheaf $\cE$ of $\cO_{X}$-modules, we also denote by $\tor(\cE)$ the subsheaf of torsion of $\cE$, so that $\cE/\tor(\cE)$ is a sheaf of torsionfree $\cO_{X}$-modules.

\begin{theorem}\label{blow}
Let $R$ be a reduced \nor{}, $E$ a \fgrmo{} having positive rank $e$ and $\deter_{0}(E)$ the order determinant of $E$. Let $X \overset{\pi}{\lra}\spec(R)$ be the blow up of $R$ with center $\deter_{0}(E)$. Then
\begin{enumerate}
\item $\pi^{*}(E)/\tor(\pi^{*}(E))$ is a locally free sheaf of $\cO_{X}$-modules of rank $e$.
\item For any birational morphism $Y \overset{\gamma}{\lra} \spec(R)$ for which $\gamma^{*}(E)/\tor(\gamma^{*}(E))$ is locally free of rank $e$, there is a unique morphism $\apl{\beta}{Y}{X}$ such that $\beta \circ \pi=\gamma$.
\end{enumerate}
\end{theorem}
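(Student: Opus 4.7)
The plan is to reduce this to the standard universal property of the blow up at an ideal, using \refp{bir} as the bridge between the condition ``$\gamma^{*}(E)/\tor(\gamma^{*}(E))$ is locally free of rank $e$'' and the condition ``$\deter_{0}(E)\cdot\cO_{Y}$ is an invertible ideal sheaf''. The two main ingredients I intend to invoke are: (i) the classical universal property of the blow up $X=\Proj R[\deter_{0}(E)t]\to\spec(R)$ with respect to invertibility of the pulled-back ideal, and (ii) \refp{bir}, which gives exactly the equivalence between invertibility of $\deter_{0}(E)\cdot S$ and freeness of $(E\oxr S)/\tau_{R}(E\oxr S)$ along a birational homomorphism $R\to S$.

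I begin by noting that $\pi$ itself is birational. Since $\deter_{0}(E)=I_{e}(\psi)$ with $\psi$ of rank $e$ and $R$ is reduced, $\deter_{0}(E)$ contains a non-zero-divisor, which is the standard criterion guaranteeing that the blow up map is birational. For part (a), I would cover $X$ by affine opens $U=\spec(S)$ on which $\deter_{0}(E)\cdot S$ is invertible (this is built into the construction of the blow up). Each such $R\to S$ is birational, and since a birational extension preserves the total ring of fractions, the $R$-torsion and $S$-torsion of any $S$-module agree; in particular the sheaf $\tor(\pi^{*}(E))|_{U}$ is the sheaf associated to $\tau_{R}(E\oxr S)$. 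Applying \refp{bir} to $R\to S$ then gives that $(E\oxr S)/\tau_{R}(E\oxr S)$ is free of rank $e$, and glueing yields (a).

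For (b), I would take an arbitrary affine open $V=\spec(S')\sube Y$. Since $\gamma$ is birational and $\gamma^{*}(E)/\tor(\gamma^{*}(E))$ is locally free of rank $e$, the same identification of torsions shows that $(E\oxr S')/\tau_{R}(E\oxr S')$ is a free $S'$-module of rank $e$. The converse direction of \refp{bir} then yields that $\deter_{0}(E)\cdot S'$ is invertible. Hence $\deter_{0}(E)\cdot\cO_{Y}$ is an invertible ideal sheaf on $Y$, and the classical universal property of $\pi$ produces a unique morphism $\beta\col Y\to X$ with $\pi\circ\beta=\gamma$.

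The main subtle point — and where I expect most of the care to go — is the identification of the sheaf-theoretic torsion with the module-theoretic $\tau_{R}$ appearing in \refp{bir}: this identification relies crucially on birationality, which is why the hypothesis on $\gamma$ is imposed and why the reducedness of $R$ (through the positive grade of $\deter_{0}(E)$) is needed to make $\pi$ itself birational. Once this local matching is justified, both statements reduce to applying \refp{bir} and the universal property of blow ups of ideals.
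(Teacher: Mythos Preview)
Your proposal is correct and follows exactly the approach the paper has in mind: the paper's own proof is a one-sentence sketch invoking precisely the ingredients you name (birationality of the blow up, \refp{bir}, the universal property of the blow up of an ideal, and local verification), and you have simply filled in those details carefully, including the identification of $\tau_{S}$ with $\tau_{R}$ along a birational extension. Note that the composition in the statement should read $\pi\circ\beta=\gamma$, as you correctly wrote.
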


\begin{proof}
The proof  is direct, just taking into account that under our conditions the blow up is a birational morphism, the universal property of the blow up, Proposition \ref{bir}, and that we only need to check it locally.
\end{proof}

\section{Reduction of modules}\label{red}

\noindent
We briefly recall the definition of reduction of a module, and notions around, and establish some basic properties that we need in the next section.

\smallbreak

Suppose that $E$ is a \fgrmo{} having a rank over a
\nor{} $R$. We denote by $\RE$ the Rees algebra of $E$, which is
by definition the symmetric algebra module its torsion
$R$-submodule. An \rmo{} $U$ of $E$ is said to be a {\it reduction} of $E$ if
$$ \RE_{r+1} = (U/\tau_{R} (U)) \cdot \RE_{r} $$
for some $r \geq 0$. Since $E$ and $E/\tau_{R}(E)$ have the same rank and the same Rees algebra, then $U$ is a reduction of $E$ if and only if $U/\tau_{R}(U)$ is a reduction of $E/\tau_{R}(E)$. Hence, we may often assume that $E$ is torsionfree, since the theory of reductions for torsionfree modules affords some additional properties.
The least integer $r$ for which $\RE_{r+1}= (U/\tau_{R}(U))
\cdot \RE_{r}$ is called {\it the reduction number of} $E$ {\it
with respect to} $U$, and is denoted by $r_{U}(E)$.

\smallbreak

The Rees algebra does not commute in general with the extension of scalars. A simple example was given by A. Micali in \cite[Chap. I, $\S 5$, Example]{mi}: Let $(R, \fm)$ be a local domain and let $k = R/\fm$. Then, $\CR_R(k)\otimes _R k \simeq R \otimes _R k \simeq k$ while $\CR_k(k\otimes _R k) \simeq k[t]$. Nevertheless, because the symmetric algebra always commutes with the extension of scalars, the Rees algebra commutes if the torsion also commutes. In particular, we have that the Rees algebra commutes with polynomial and Nagata extensions, which implies that reductions are preserved by polynomial and Nagata extensions.

\smallbreak

Let $E$ be a \fgtfrmo having rank. A reduction of $E$ is called {\it minimal} if it is minimal with respect to
inclusion. Minimal reductions always exist. The {\it reduction
number} of $E$, denoted by $r(E)$, is the minimum of $r_{U}(E)$,
where $U$ ranges over all minimal reductions of $E$.

\smallbreak


Over an integral domain the order determinants can be used to check whether a submodule can be a reduction of a module (cf. \cite[Proposition~8.66]{wv2}).
Since, under the assumption that $\pjd E=1$, we have
\begin{equation}\label{pjd1}
\deter_{0}(E)=F_{e}(E),
\end{equation}
then we may assert the following:

\begin{proposition}
Let $R$ be an integral domain and let $U$ and $E$ be \fgtfrmos of rank $e$ with $U \subset E$. Suppose that $\pjd E=\pjd U=1$. Then $U$ is a reduction of $E$ \sse $F_{e}(U)$ is a reduction of $F_{e}(E)$.
\end{proposition}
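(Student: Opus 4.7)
The plan is to reduce the statement directly to the cited result of Vasconcelos \cite[Proposition~8.66]{wv2}, which asserts that over an integral domain $R$, if $U \subseteq E$ are finitely generated torsion-free modules of the same rank $e$, then $U$ is a reduction of $E$ if and only if $\deter_{0}(U)$ is a reduction of $\deter_{0}(E)$. Once this is in hand, the only thing left is to translate the hypothesis on order determinants into one on Fitting ideals.

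The translation is provided by Eq.~(\ref{pjd1}), which was deduced in the previous section from Villamayor's construction together with Proposition~\ref{idmpj1}: whenever a finitely generated module has projective dimension at most $1$ and positive rank $e$, its order determinant coincides with its $e$-th Fitting ideal (since then the auxiliary module $E_{1}$ in Villamayor's construction can be taken to be $E$ itself). I would invoke this equality twice, once for $E$ and once for $U$, using the hypothesis $\pjd E=\pjd U=1$, to obtain
\begin{equation*}
\deter_{0}(E) = F_{e}(E) \quad \text{and} \quad \deter_{0}(U) = F_{e}(U).
\end{equation*}

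Combining these two identifications with the cited criterion gives the chain of equivalences
\begin{equation*}
U \text{ is a reduction of } E \iff \deter_{0}(U) \text{ is a reduction of } \deter_{0}(E) \iff F_{e}(U) \text{ is a reduction of } F_{e}(E),
\end{equation*}
which is exactly the claim. Thus there is no real obstacle; the content of the statement lies entirely in the two inputs (Vasconcelos's criterion and the identification $\deter_{0}=F_{e}$ for $\pjd \leq 1$), and the proof is essentially a one-line concatenation. The only point worth checking explicitly, to be safe, is that $U$ having the same rank $e$ as $E$ (and projective dimension $1$) legitimately produces a well-defined $\deter_{0}(U)$ in the same class of fractional ideals needed for the reduction comparison, which is immediate from the discussion following Eq.~(\ref{allideals}).
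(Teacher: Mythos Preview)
Your proposal is correct and follows essentially the same approach as the paper: the proposition is stated immediately after Eq.~(\ref{pjd1}) as a direct consequence of Vasconcelos's criterion \cite[Proposition~8.66]{wv2} combined with the identification $\deter_{0}(E)=F_{e}(E)$ valid when $\pjd E\leq 1$, and the paper does not even supply a separate formal proof. Your write-up simply makes explicit the two-step concatenation that the paper leaves implicit.
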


\smallbreak

In \cite[Proposition~2.1]{cz2} or \cite[Theorem~3.3]{oo} we can find the fundamental properties of reduction of modules. For completeness we prove that the minimal number of generators of a \fgtfrmo $E$ and of any minimal reduction are well related as in the case of ideals.


\begin{lemma}\label{mred}
    Let $\Rm$ be a \nol, $E$ a \fgtfrmo and  $U$ an
    $R$-sub\-mo\-du\-le of $E$.
    Then $U$ is a reduction of $E$ \sse $U+\fm E$ is a
    reduction of $E$.
\end{lemma}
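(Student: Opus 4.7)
The plan is to prove both implications by working directly from the definition, with the nontrivial direction reducing to an application of Nakayama's lemma in each graded piece of the Rees algebra $\RE$.

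For the forward direction, the argument is essentially formal: since $U \subseteq U + \fm E \subseteq E$ and since $E$ is torsionfree (so $\tau_R(U)=\tau_R(U+\fm E)=0$), if $U$ is a reduction with reduction number $r$, then
\[
\RE_{r+1} \;=\; U \cdot \RE_r \;\subseteq\; (U+\fm E)\cdot \RE_r \;\subseteq\; \RE_{r+1},
\]
forcing equality, which is exactly what it means for $U+\fm E$ to be a reduction of $E$.

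For the converse, I would start from the assumption that $U + \fm E$ is a reduction, so there exists $r\geq 0$ with
\[
\RE_{r+1} \;=\; (U+\fm E)\cdot \RE_r \;=\; U\cdot \RE_r \,+\, \fm E \cdot \RE_r \;=\; U\cdot \RE_r \,+\, \fm\, \RE_{r+1}.
\]
The goal is then to erase the term $\fm\,\RE_{r+1}$ via Nakayama. For this I need that $\RE_{r+1}$ is a finitely generated $R$-module, which follows because $E$ is finitely generated and $\RE_{r+1}$ is a homomorphic image of the $(r+1)$-st symmetric power $\cS(E)_{r+1}$, itself finitely generated over $R$. Hence $\RE_{r+1}/(U\cdot \RE_r)$ is a finitely generated $R$-module annihilated modulo $\fm$, and Nakayama's lemma applied in the local ring $\Rm$ gives $\RE_{r+1}=U\cdot \RE_r$, showing $U$ is a reduction of $E$.

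There is no serious obstacle; the only care needed is noticing that $\RE_{r+1}$ is finitely generated over $R$ (so that Nakayama applies) and that because $E$ is torsionfree the torsion submodules appearing in the definition of reduction can be dropped. The statement is the module-theoretic analogue of the classical fact for ideals that $J\subseteq I$ is a reduction of $I$ if and only if $J+\fm I$ is, and the proof mirrors it step by step.
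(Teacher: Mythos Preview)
Your proposal is correct and follows essentially the same route as the paper: the forward implication is the trivial containment, and the converse expands $(U+\fm E)\cdot\RE_r$ as $U\cdot\RE_r+\fm\,\RE_{r+1}$ and then applies Nakayama's lemma. Your explicit remark that $\RE_{r+1}$ is finitely generated over $R$ (so that Nakayama applies) is a detail the paper leaves implicit.
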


\begin{proof}
    Suppose that $U$ is a reduction of $E$. Then $\RE_{r+1} = U \cdot
    \RE_{r}$ for some $r \geq 0$ and we have
    $$ (U+\fm E) \cdot \RE_{r} = U \cdot \RE_{r} + (\fm E) \cdot
    \RE_{r} = \RE_{r+1}.$$

    Conversely, assume that $(U+\fm E) \cdot \RE_{r} = \RE_{r+1}$ for
    some $r \geq 0$. Then
    $$ \RE_{r+1} = (U+\fm E) \cdot \RE_{r}= U \cdot \RE_{r} + (\fm
    E) \cdot \RE_{r} = U \cdot \RE_{r} + \fm \RE_{r+1} $$
    and so, by Nakayama's lemma, $\RE_{r+1} = U \cdot
    \RE_{r}$.
\end{proof}

\begin{proposition}\label{nota}
    Let $\Rmk$ be a \nol{} and $E$ a \fgtfrmo having rank. If $U=
    Ra_{1} + \cdots + Ra_{n}$ is a minimal reduction of
    $E$ with $n= \mu(U)$, then $\sek{\ol{a}}{n}$ are linearly
    independent over $\frk$, where $\ol{a}_{i}=a_{i}+\fm E$. In particular  $\sek{a}{n}$
    are part of a minimal system of generators of $E$. Moreover,
    $U \cap \fm E = \fm U$ and $\mu(E) = \mu(U) + \mu(E/U)$.
\end{proposition}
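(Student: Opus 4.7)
The plan is to reduce all four claims to the linear independence of $\sek{\ol{a}}{n}$ over $\frk$, which I would establish by contradiction using \refl{mred}. Suppose a non-trivial relation $\sum_{i=1}^{n} c_i \ol{a}_i = 0$ held in $E/\fm E$, with $c_j$ say a unit. Then $a_j \in U' + \fm E$, where $U' := \sum_{i \neq j} R a_i$, so $U \subseteq U' + \fm E$ and hence $U + \fm E = U' + \fm E$. Because $U$ is a reduction of $E$, \refl{mred} gives that $U + \fm E$ is a reduction of $E$, hence so is $U' + \fm E$, and applying \refl{mred} a second time, $U'$ itself is a reduction of $E$. But $\mu(U) = n$ means that removing $a_j$ yields a proper submodule, so $U' \subsetneq U$, contradicting the minimality of $U$ as a reduction.

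Once linear independence is available, the remaining assertions are routine. Lifting any $\frk$-basis of $E/\fm E$ extending $\sek{\ol{a}}{n}$ back to $E$ and applying Nakayama's lemma produces a minimal system of generators of $E$ containing $\sek{a}{n}$. For the equality $U \cap \fm E = \fm U$, the inclusion $\fm U \subseteq U \cap \fm E$ is immediate; conversely, any $u = \sum r_i a_i \in U \cap \fm E$ satisfies $\sum \ol{r_i}\, \ol{a_i} = 0$ in $E/\fm E$, which by linear independence forces $r_i \in \fm$, and hence $u \in \fm U$. Finally, tensoring the exact sequence $0 \to U \to E \to E/U \to 0$ with $\frk$ and noting that the image of $U \oxr \frk$ in $E/\fm E$ has $\frk$-dimension exactly $n$ (again by linear independence) yields $\mu(E) = \mu(U) + \mu(E/U)$.

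The only delicate step is the contradiction argument: one must invoke \refl{mred} in both directions, first to transfer the reduction property from $U$ to $U + \fm E = U' + \fm E$, and then to bring it back from $U' + \fm E$ down to $U'$. The rest is essentially a verbatim adaptation to finitely generated torsionfree modules of the classical Northcott-Rees argument for minimal reductions of ideals.
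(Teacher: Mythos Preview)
Your proof is correct and follows essentially the same approach as the paper's. The paper's argument is more terse---it simply says that $V = Ra_1 + \cdots + Ra_{n-1}$ is a reduction ``by \refl{mred}'' without spelling out the two-way application you describe, and it deduces $U \cap \fm E = \fm U$ and the additivity of $\mu$ directly from the linear independence via the short exact sequence $0 \to U/\fm U \to E/\fm E \to (E/U)/\fm(E/U) \to 0$---but the logic is identical.
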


\begin{proof}
    In fact, if $\sek{\ol{a}}{n}$ are not linearly
    independent over $\frk$, without loss of ge\-ne\-ra\-li\-ty we may
    assume that $\ol{a}_{n} \in \langle  \sek{\ol{a}}{n-1}
    \rangle$, so $V = Ra_{1} + \cdots + Ra_{n-1} \varsubsetneq
    U$ is also a reduction of $E$
    (by \refl{mred}), contradicting the minimality of $U$. Hence
    $\sek{\ol{a}}{n}$ are \li{} over $\frk$ and so
    $U \cap \fm E = \fm U$.
    This implies that any minimal generating set of $U$ can be
    extended to a minimal generating set of $E$. In particular,
    $\sek{a}{n}$ are part of a minimal system of generators
    of $E$. Furthermore,
    $$ 0 \ra U/\fm U \simeq (\fm E + U) /\fm E \ra E/\fm E \ra
    E/(\fm E + U) \simeq (E/U)/\fm (E/U) \ra 0 $$
    is an exact sequence. Hence
    $$   \mu(E) = \dim_{\frk}(E/\fm E) = \dim_{\frk}(U/\fm U) +
    \dim_{\frk}((E/U)/\fm (E/U)) = \mu(U) + \mu(E/U), $$
    and the result follows.
\end{proof}

\begin{corollary}\label{nota1}
    Let $\Rm$ be a \nol{} and $E$ a \fgrmo{} having positive
    rank. If $U$ is a reduction of $E$ then $U \nsub \fm E$.
\end{corollary}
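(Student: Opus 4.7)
The plan is to argue by contradiction, combining the preceding Lemma~\ref{mred} with a Nakayama argument at a high graded piece of $\RE$. Suppose, toward a contradiction, that $U \subseteq \fm E$. Then $U + \fm E = \fm E$, and Lemma~\ref{mred} tells us that $\fm E$ is itself a reduction of $E$; that is, there is some $r \geq 0$ such that
\begin{equation*}
\RE_{r+1} = \big((\fm E)/\tau_R(\fm E)\big)\cdot \RE_{r}.
\end{equation*}

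Next, I would observe that the image of $\fm E$ inside $\RE_1 = E/\tau_R(E)$ lies in $\fm\cdot (E/\tau_R(E)) = \fm \RE_1$. Therefore the right-hand side of the displayed equation is contained in $\fm\RE_1 \cdot \RE_r = \fm\RE_{r+1}$, giving the chain of inclusions $\RE_{r+1} \subseteq \fm\RE_{r+1} \subseteq \RE_{r+1}$, i.e.\ $\RE_{r+1} = \fm \RE_{r+1}$. Since $E$ is finitely generated, so is the $R$-module $\Sym_R^{r+1}(E)$, and hence so is its quotient $\RE_{r+1}$. Nakayama's lemma then forces $\RE_{r+1} = 0$.

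To close the contradiction, I would use the hypothesis that $E$ has positive rank $e\geq 1$. Setting $Q = \Quot(R)$ and using that the symmetric algebra commutes with base change while the $R$-torsion submodule vanishes after tensoring with $Q$, one has
\begin{equation*}
\RE \oxr Q \;\simeq\; \Sym_Q(E \oxr Q) \;\simeq\; Q[T_1,\dots,T_e],
\end{equation*}
so $\RE_{n} \oxr Q \neq 0$ for every $n \geq 0$; in particular $\RE_{r+1} \neq 0$, contradicting the conclusion above. Hence $U \nsubseteq \fm E$.

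The argument is short and the only delicate point is the bookkeeping in the second step, where one has to be careful that ``$\fm E$ is a reduction'' is interpreted via its image in $E/\tau_R(E)$ rather than via $\fm E$ itself; once this is clarified the inclusion into $\fm \RE_{r+1}$ is immediate and Nakayama finishes the proof.
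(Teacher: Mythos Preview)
Your argument is correct and takes a genuinely different route from the paper. The paper first reduces to the torsionfree case, passes to a \emph{minimal} reduction $V\subseteq U$, and then invokes \refp{nota} (the identity $V\cap \fm E=\fm V$) to conclude that $V\subseteq \fm E$ would force $V=\fm V$, hence $V=0$ by Nakayama. You instead bypass minimal reductions and \refp{nota} entirely: from $U\subseteq \fm E$ you get that $\fm E$ is a reduction, whence $\RE_{r+1}=\fm\RE_{r+1}$, and Nakayama is applied to the graded piece $\RE_{r+1}$ rather than to $V$; the contradiction with positive rank then comes from $\RE\oxr Q\simeq Q[T_1,\dots,T_e]$. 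Your approach is more self-contained in that it does not depend on the structure theory of minimal reductions developed in \refp{nota}; the paper's approach, on the other hand, keeps the Nakayama step at the level of submodules of $E$ and fits naturally with the surrounding results.

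One small point of hygiene: \refl{mred} is stated for torsionfree $E$, so strictly speaking you should either first pass to $E/\tau_R(E)$ (as the paper does) or simply note that the implication you use, namely ``$U$ a reduction and $U\subseteq \fm E$ implies $\fm E$ is a reduction'', is immediate from the definition without any torsionfreeness hypothesis (any submodule containing a reduction is a reduction). With that remark your proof is complete.
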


\begin{proof}
Since if $U \subseteq \fm E$ then $U/\tau_{R}(U) \subseteq \fm (E/\tau_{R}(E))$, we may assume that $E$ is torsionfree.  Let $V \sube U$ be a minimal reduction of $E$. We begin to prove
    that $V \nsub \fm E$. If not, since $V \cap \fm E = \fm V$ (by
    \refp{nota}) we
    deduce that $V= \fm V$. Hence, by Nakayama's Lemma, $V=0$,
    a contradiction.
    Therefore $V \nsub \fm E$, and so $U \nsub \fm E$.
\end{proof}

Any reduction $U$ of $E$ has rank and $\rank U = \rank E$.
Moreover, $E/U$ is a torsion module and $\grd E/U >0$. Further, if  $V \sub U$ with $E/V$ having rank then $U/V$ is a reduction of $E/V$ (see \cite[Proposition~2.4]{cz3}).
\smallbreak

Given a \nol{} $\Rmk$ and a \fgtfrmo $E$ having rank,
it is known that
\begin{equation}\label{mule}
\mu(U) \geq \lE = \dim \FE
\end{equation}
for any reduction $U$ of $E$ with equality \sse $U$ is a minimal reduction of $E$, supposing $k$ infinite. In fact, in this case the classes in $E/\fm E$ of any minimal generating set of $U$ are a system of parameters of $\FE= \RE \oxr R/\fm$.

\smallbreak
\section{The quotient module $\ol{E}=E''/F$}\label{qm}

\noindent
In this section we recall the construction of a \gbi{} given in
\cite{suv}. We explore the inductive process of this construction
in order to prove a formula for the minimal number of generators of a
\gbi. We shall do this more generally for a quotient module
$\ol{E}$ of rank $1$ (not necessarily an ideal).
This result will be applied in the next section mainly in the case of modules with projective dimension one.

\smallbreak

Let $\Rm$ be a \nol, and $E$ a \fgrmo{} having a rank $e > 0$, and
let $U = \sum_{i=1}^n Ra_{i}$ be a submodule of $E$ such that $E/U$
is a torsion module (which holds if $U$ is a reduction of $E$). Further, let
$\bZ = \{ z_{ij} \mid 1 \leq i \leq n, 1 \leq j \leq e-1 \}$
be a set of $n \times (e-1)$ indeterminates over $R$. We fix the
notation
$$ R\pr = R[z_{ij} \mid 1\leq i \leq n,\ 1\leq j \leq e-1]=R[\bZ] \, , \;
R\prd = R\pr_{\fm R\pr}=R(\bZ). $$
The ring $R\prd$ is a local ring with maximal ideal $\fm R\prd$ and
infinite residue field $k(\bZ)$ and is called the Nagata extension of $R$
\wrt $\bZ$. Further, we set
$$ U\pr = U \oxr R\pr \, , \;U\prd = U \oxr R\prd \, , \;E\pr =
E \oxr R\pr \, , \;E\prd = E \oxr R\prd \,, $$
$$ x_{j} = \sum_{i=1}^n z_{ij} a_{i} \in U\pr \sube E\pr \, ,
\;F = \sum_{j=1}^{e-1} R\prd x_{j}. $$
In \cite{suv} it is proved that $F$ is a free module over
$R\prd$ of rank $e-1$.
We denote the $R\prd$-module $E\prd/F$ by
$\ol{E}$; this module has rank 1. There are good relations between some numerical invariants of $\ol{E}$ and $E$. In particular, the Krull dimensions of their Rees algebras and of their fiber cones are related by the equalities
\begin{align}
& \dim \RE = \dim \CR(\ol{E})+e-1, \label{Ebarra}\\
& \ell(E) = \ell(\ol{E}) +e -1. \label{Ebarra1}
\end{align}

\smallbreak

In \cite{cz3} we explored the inductive process of this construction and made deeper the relations between $E$ and $\ol{E}$. In particular, we proved that, under certain conditions, the generators $\sek{x}{e-1}$ of $F$ form a regular sequence on $\CR(E'')$ and
\begin{equation} \label{Ebarra2}
\CR(\ol{E}) \simeq \CR(E'')/\langle x_{1},\ldots,x_{e-1}\rangle
\end{equation}
(cf \cite[Theorem~3.7]{cz3}), hence $\CR(E'')$ is a deformation of $\CR(\ol{E})$. As a consequence we got
\begin{equation} \label{Ebarra3}
\dpt \RE = \dpt \CR(\ol{E}) +e -1.
\end{equation}

\smallbreak

In this section, following our approach in \cite{cz3}, we prove that
\begin{equation}\label{Ebarra4}
\mu(\ol{E}) = \mu(E) +e -1.
\end{equation}
This equality allow us to construct a minimal generating set $\sek{x}{e-1},\ldots,x_{n}$ of $E''$ containing the generators $\sek{x}{e-1}$ of $F$.

We use the same notation as in \cite{cz3}.
For $1\leq j \leq e-1$ set
$$\bZ_{j} = \{ z_{1j},\ldots,z_{nj} \}$$
and
$$R_{j}= R[\sek{\bZ}{j}] \; , \; R_{j}\prd = {R_{j}}_{\fm R_{j}}=
R(\sek{\bZ}{j}).$$
Then
$$ R_{j} = R_{j-1} \oxr R[\bZ_{j}]=R_{j-1}[\bZ_{j}]. $$
According to \cite[Lemma~3.1]{cz3}, we have
  $$R\pr \simeq R_{j} \oxr R[\bZ_{j+1},\ldots, \bZ_{e-1}] \simeq
R_{j}[\bZ_{j+1},\ldots, \bZ_{e-1}],$$
$$ R\prd \simeq \left( R_{j}\prd[\bZ_{j+1},\ldots, \bZ_{e-1}] \right)_{\fm
R_{j}\prd[\bZ_{j+1},\ldots,\bZ_{e-1}]} = R_{j}\prd(\bZ_{j+1},\ldots, \bZ_{e-1}), $$
and
\begin{align*}
R_{j}\prd & \simeq (R_{j-1}\prd[\bZ_{j}])_{\fm R_{j-1}\prd[\bZ_{j}]} = R_{j-1}\prd(\bZ_{j}) = R(\sek{\bZ}{j-1})(\bZ_{j}) \\
& \simeq R(\bZ_{j})(\bZ_{1}, \ldots, \bZ_{j-1}) = \left( (R[\bZ_{j}]_{\fm R[\bZ_{j}]}) [\bZ_{1}, \ldots , \bZ_{j-1}]\right)_{\fm (R[\bZ_{j}]_{\fm R[\bZ_{j}]})[\bZ_{1}, \ldots , \bZ_{j-1}]}.
\end{align*}
Moreover, for $1\leq j \leq e-1$, set $E_{j} = E \oxr R_{j}$, $E_{j}\prd
= E \oxr R_{j}\prd \simeq E_{j} \ox_{R_{j}} R_{j}\prd$.
Hence
$$ E_{j} = E_{j-1} \oxr R[\bZ_{j}] = E_{j-1} \ox_{R_{j-1}}R_{j}, $$
$$ E\pr \simeq E_{j}\ox_{R_{j}} R\pr \simeq E_{j} \oxr
R[\bZ_{j+1},\ldots,\bZ_{e-1}], $$
$$ E\prd \simeq E\pr \ox_{R\pr} R\prd \simeq E_{j}
\ox_{R_{j}}R\prd \simeq E_{j}\prd \ox_{R_{j}\prd}R\prd, $$
$$ E_{j}\prd \simeq E_{j-1}\prd \ox_{R_{j-1}\prd}R_{j}\prd \; , \;
\ol{E_{j}\prd} \simeq \ol{E_{j}} \ox_{R_{j}}R_{j}\prd. $$

Furthermore, for $1\leq j \leq e-1$, set $\ol{E_{j}} =
E_{j}/\seg{x}{j}$, $\ol{E_{j}\prd} =
E_{j}\prd/\seg{x}{j}$,
where $\seg{x}{j}$ denote in each case the sub\-mo\-du\-le generated
by $\sek{x}{j}$. By convention $\ol{E_{0}} = E= E_{0}$, $R_{0}=R$. We have, for each $1\leq j \leq e-1$,
\begin{equation} \label{ejq}
\ol{E_{j}} \simeq (\ol{E_{j-1}} \ox_{R_{j-1}}R_{j})/\langle \ol{x_{j}} \rangle \; , \; \ol{E_{j}\prd} \simeq (\ol{E_{j-1}\prd} \ox_{R_{j-1}\prd}R_{j}\prd)/\langle \ol{x_{j}} \rangle,
\end{equation}
where $\ol{x_{j}} = x_{j} + \seg{x}{j-1}$ in each case (cf. \cite[Lemma~3.2]{cz3}).
These relations for $E$ are also true for any submodule $U$
of $E$.
Moreover, since $R\prd$ is the Nagata extension of $R_{j}\prd$ \wrt $\bZ_{j+1},
\ldots, \bZ_{e-1}$,
\begin{equation} \label{ejq2}
\rank \ol{E_{j}\prd} = \rank \ol{U_{j}\prd}=e-j \geq 2 \;\, , \; \,
\mu(E_{j}\prd)=\mu(E\prd)
\end{equation}
(cf. \cite[Lemma~3.3]{cz3}).

\medbreak

For the module $\ol{E}$ constructed above we are able to
prove that $\mu(\ol{E}) = \mu(E)-e+1$. We argue by induction on the rank of $F$.

\begin{lemma}\label{mu}
     Let $\Rmk$ be a \nol, $E$ a \fgrmo{} having
     rank $e \geq 2$ and $U=\sum_{i=1}^nRa_{i}$ a reduction of $E$.
     Let $R\pr=R[\sek{z}{n}]$, $R\prd={R\pr}_{\fm R\pr}$, $x =
     z_{1}a_{1} + \cdots + z_{n} a_{n}$
     and let
     $U\prd = U \oxr R\prd$, $E\prd= E \oxr R\prd$. Then
     $\mu(V\prd/R\prd x) = \mu(V) - 1$,
     for every $R$-submodule $V$ of $E$ containing $U$ and
     $V\prd = V \oxr R\prd$.
\end{lemma}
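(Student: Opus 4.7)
The plan is to derive the equality from the short exact sequence
$$0 \to R\prd x \to V\prd \to V\prd/R\prd x \to 0$$
by tensoring with the residue field $\frk\prd := R\prd/\fm R\prd = \frk(\sek{z}{n})$. The crux will be to show that the image $\ol{x}$ of $x$ in $V\prd/\fm R\prd V\prd$ is nonzero; once this is established, $x$ extends to a minimal generating set of $V\prd$ and the claim will follow from a short dimension count.

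First I would use faithful flatness of the Nagata extension $R \to R\prd$ together with the identification of residue fields to obtain
$$V\prd \otimes_{R\prd} \frk\prd \simeq (V/\fm V) \otimes_\frk \frk\prd,$$
which immediately gives $\mu(V\prd)=\mu(V)$. The step I expect to be the main obstacle is verifying that $\ol{x} \neq 0$: this is where the reduction hypothesis enters. Since $V \supseteq U$ and $U$ is a reduction of $E$, Corollary \ref{nota1} forces $U \nsub \fm E$, so some $a_i$ (say $a_1$) lies outside $\fm E \supseteq \fm V$, and hence $\ol{a}_1 \neq 0$ in $V/\fm V$. Completing $\ol{a}_1$ to a $\frk$-basis of $V/\fm V$ and expressing each $\ol{a}_i$ in this basis, the coefficient of $\ol{a}_1$ in $\sum_{i=1}^{n} z_i \ol{a}_i$ has the form $z_1 + \alpha_{21}z_2+\cdots+\alpha_{n1}z_n$ with $\alpha_{i1}\in \frk$, which is a nonzero element of $\frk\prd$.

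Finally, tensoring the short exact sequence above with $\frk\prd$ yields a right-exact sequence whose first term $R\prd x \otimes_{R\prd} \frk\prd$ is at most one-dimensional over $\frk\prd$ (as $R\prd x$ is cyclic) and maps to the nonzero line spanned by $\ol{x}$. Hence the first map is injective and a dimension count gives
$$\mu(V\prd/R\prd x) = \mu(V\prd) - 1 = \mu(V) - 1,$$
as required. The bulk of the argument is formal; the only place where specific information about $U$ is used is in producing $a_1 \notin \fm E$, which is exactly what guarantees that $\ol{x}$ survives in $V\prd/\fm R\prd V\prd$.
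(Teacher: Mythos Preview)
Your proof is correct and follows essentially the same route as the paper: both arguments reduce to showing that $x \notin \fm R\prd V\prd$ (equivalently $\ol{x}\neq 0$ in $V\prd/\fm R\prd V\prd$), invoking \refc{nota1} to produce $a_1 \notin \fm E \supseteq \fm V$, and then conclude $\mu(V\prd/R\prd x)=\mu(V\prd)-1=\mu(V)-1$. The only difference is in the verification of $\ol{x}\neq 0$: the paper argues by contradiction over the polynomial ring $R\pr$, picking $y\in R\pr\setminus \fm R\pr$ with $yx\in \fm V\pr$ and comparing top coefficients in $z_1$, whereas you compute directly in $(V/\fm V)\otimes_\frk \frk(\sek{z}{n})$ via a basis extension of $\ol{a}_1$; your computation is a bit more streamlined but the content is the same.
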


\begin{proof}
    Let $V$ be any \rmo{} such that $U \sube V \sube E$. Since $U
    =\sum_{i=1}^nRa_{i} \nsub \fm E$ (by \refc{nota1}) there
    exists an $i$ such that $a_{i} \not \in \fm E$. \Wlog
    we may assume that $a_{1} \not \in \fm E$.
    We claim that $x/1 \in U\prd \sube V\prd \sube E\prd$
    is part of a minimal generating set of
    $V\prd$. Suppose not, that is $x/1 \in \fm V\prd$.
    Since $V\prd = V\pr
    \ox_{R\pr} {R\pr}_{\fm R\pr} = {V\pr}_{\fm R\pr}$ there exists
    $y \in R\pr \setminus \fm R\pr$ such that $y x \in \fm V\pr$.
    Set
    $$\ol{y} = y + \fm R\pr \in R\pr/\fm R\pr \simeq R/\fm \oxr R\pr
    \simeq k[\sek{z}{n}], $$
    and set
    $$\ol{x} = x + \fm V\pr = \ol{a_{1}} z_{1} + \cdots + \ol{a_{n}}
    z_{n} \in V\pr/\fm V\pr = V/\fm V \oxr
    R\pr = (V/\fm V)[\sek{z}{n}], $$
    with $\ol{a_{i}}= a_{i} + \fm V \in V/\fm V$, $(i=1,\ldots,n)$.
    Hence $\ol{y}\, \ol{x} = 0$ with $\ol{y} \neq 0$.
    Now write
    $\ol{y} = \sum_{j=0}^m \ol{y_{j}} z_{1}^j$
    with $\ol{y_{j}} \in k[z_{2},\ldots,z_{n}]$, $(j=0,\ldots,m)$
    and  $\ol{y_{m}} \neq 0$.
    Hence, we have
        $$ \ol{y} \, \ol{x} = \sum_{i=1}^n \ol{y} \,\ol{a_{i}}
        z_{i} = \ol{y} \,\ol{a_{1}}z_{1} + \sum_{i=2}^n \ol{y} \,
        \ol{a_{i}}z_{i}
     = \sum_{j=0}^m \ol{y_{j}} \, \ol{a_{1}}z_{1}^{j+1} +
    \sum_{i=2}^n \ol{y} \,\ol{a_{i}}z_{i} $$
    and since $\ol{y}\, \ol{x} = 0$ we must have
    $\ol{y_{m}} \, \ol{a_{1}} z_{1}^{m+1} =0$. Hence
    $\ol{y_{m}}\, \ol{a_{1}} =0$.
    But $\ol{y_{m}}\, \ol{a_{1}} \in$ \linebreak $V/\fm
    V[z_{2},\ldots,
    z_{n}] = k[z_{2},\ldots,z_{n}] \ox_{k} V/\fm V$ with
    $k[z_{2},\ldots,z_{n}]$ \ff{} over $k$ and
    $0 \neq \ol{y_{m}} \in k[z_{2},\ldots,z_{n}]$, $\ol{a_{1}} =
    a_{i} + \fm V \neq 0$
    (because $a_{1} \not \in \fm V \sube \fm E$).
    Therefore $x/1 \not \in \fm V\prd$
    and $x/1$ is part of a minimal generating set of $V\prd$, as
    claimed.
    It follows that
    $$\mu(V\prd/R\prd x) = \mu(V\prd) - 1 = \mu(V) - 1,$$
    proving the assertion.
\end{proof}


\begin{lemma}\label{mu1}
     Let $\Rmk$ be a \nol, let $E$ be a \fg{} \rmo{} having
     rank $e \geq 2$ and $U=\sum_{i=1}^nRa_{i}$ a reduction of $E$.
     Let $R\prd$, $\sek{x}{e-1}$, $F$ be defined as before.
     Set, for each $1 \leq t \leq e-1$, $F_{t} = \sum_{i=1}^t R\prd
     x_{i}$. Then
     \begin{enumerate}
         \item  $\mu(V\prd/F_{t}) = \mu(V) - t$,

         \item  $F_{t} \cap \fm R\prd V\prd = \fm R\prd F_{t}$,
     \end{enumerate}
     for every $R$-submodule $V$ of $E$ containing $U$,
     $V\prd = V \oxr R\prd$.
\end{lemma}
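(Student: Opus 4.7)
The plan is to prove (a) and (b) simultaneously by induction on $t$, reducing each inductive step to \refl{mu} applied over an intermediate Nagata extension. The base case $t=1$ is essentially \refl{mu} itself: since $R\prd \simeq R_1\prd(\bZ_2,\ldots,\bZ_{e-1})$ is a faithfully flat local extension of $R_1\prd$, the minimal number of generators is preserved by the scalar extension from $R_1\prd$ to $R\prd$, hence $\mu(V\prd/R\prd x_1) = \mu(V_1\prd/R_1\prd x_1) = \mu(V)-1$ by \refl{mu}. Part (b) for $t=1$ follows because this equality forces $x_1 \notin \fm R\prd V\prd$, as is already visible inside the proof of \refl{mu}.

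For the inductive step at level $t \geq 2$, I set $F_{t-1}^{\circ} := \sum_{i=1}^{t-1} R_{t-1}\prd x_i$ and use $R\prd \simeq R_{t-1}\prd(\bZ_t,\ldots,\bZ_{e-1})$ to identify
\[ V\prd/F_{t-1} \;\simeq\; \bigl(V_{t-1}\prd/F_{t-1}^{\circ}\bigr) \otimes_{R_{t-1}\prd} R\prd, \]
so that $\mu$ is preserved under this scalar extension. I would then apply \refl{mu} over the local ring $R_{t-1}\prd$, with the roles of $E$, $U$, $V$ played by $E_{t-1}\prd/F_{t-1}^{\circ}$, $U_{t-1}\prd/F_{t-1}^{\circ}$ (with generators $\ol{a_i}$), and $V_{t-1}\prd/F_{t-1}^{\circ}$, and with new indeterminates $\bZ_t$ and new element $\ol{x_t} = \sum_{i} z_{it}\,\ol{a_i}$. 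Writing $F_t^{\circ} := \sum_{i=1}^t R_t\prd x_i$, this yields
\[ \mu(V_t\prd/F_t^{\circ}) = \mu(V_{t-1}\prd/F_{t-1}^{\circ}) - 1, \]
and together with the inductive hypothesis (a) at level $t-1$ and faithful flatness of $R\prd/R_t\prd$ this translates into $\mu(V\prd/F_t) = \mu(V)-t$, which is (a) at level $t$.

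Part (b) is then deduced from (a). Tensoring the short exact sequence $0 \ra F_t \ra V\prd \ra V\prd/F_t \ra 0$ with the residue field $R\prd/\fm R\prd$ yields an inequality
\[ \mu(F_t) + \mu(V\prd/F_t) \geq \mu(V\prd), \]
with equality if and only if the induced map $F_t/\fm R\prd F_t \hra V\prd/\fm R\prd V\prd$ is injective, which is equivalent to $F_t \cap \fm R\prd V\prd = \fm R\prd F_t$. Since $F_t$ is free of rank $t$ we have $\mu(F_t) = t$ and $\mu(V\prd) = \mu(V)$, so (a) forces this equality and thus gives (b).

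The main obstacle is to justify the applicability of \refl{mu} at each inductive step, i.e. to verify that $U_{t-1}\prd/F_{t-1}^{\circ}$ is a reduction of $E_{t-1}\prd/F_{t-1}^{\circ}$ and that the latter has rank at least $2$. The rank is $e-(t-1) \geq 2$ since $t \leq e-1$. For the reduction property, reductions are preserved under the Nagata extension $R \ra R_{t-1}\prd$ (as recalled in Section~\ref{red}), giving that $U_{t-1}\prd$ is a reduction of $E_{t-1}\prd$; then \cite[Proposition~2.4]{cz3} ensures that quotienting by $F_{t-1}^{\circ} \sube U_{t-1}\prd$ preserves this property, since $E_{t-1}\prd/F_{t-1}^{\circ}$ has positive rank.
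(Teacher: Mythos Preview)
Your proposal is correct and follows essentially the same inductive scheme as the paper: both argue by induction on $t$, reducing the step from $t-1$ to $t$ to an application of \refl{mu} over the intermediate Nagata extension $R_{t-1}\prd$ (with $E_{t-1}\prd/F_{t-1}^{\circ}$, $U_{t-1}\prd/F_{t-1}^{\circ}$, $V_{t-1}\prd/F_{t-1}^{\circ}$ in place of $E$, $U$, $V$), and transporting the result to $R\prd$ via faithful flatness. Your explicit verification of the hypotheses of \refl{mu} (rank $e-(t-1)\geq 2$ and the reduction property via \cite[Proposition~2.4]{cz3}) and your spelled-out derivation of (b) from (a) via the residue-field exact sequence are more detailed than the paper's treatment, but the argument is the same in substance.
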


\begin{proof}
Let $V$ be any \rmo{} such that $U \sube V \sube E$ and set $V\prd=V \oxr R\prd$.

a) We use induction on $t$. For $t=1$ we apply the previous lemma, since $R_{1} = R[z_{11},\ldots,$ $z_{n1}]$ and $F_{1}$ is freely generated by $x_{1}= z_{11}a_{1}+ \cdots+z_{n1}a_{n}$.
Suppose that $1 \leq j \leq t \leq e-1$.
We set, as before, $E_{j}= E \oxr R_{j}$, $U_{j} =U \oxr R_{j}$ with $R_{j}=R[z_{11},\ldots,z_{1n},\ldots,z_{1j},\ldots,z_{nj}]=R_{j-1}[\bZ_{j}]$, $R_{j}\prd ={R_{j}}_{\fm R_{j}}$ and $E_{j}\prd = E \oxr R_{j}\prd$, $U_{j}\prd = U \oxr R_{j}\prd$.
Moreover, set $V_{j}=V \oxr R_{j}$, $V_{j}\prd = V \oxr R_{j}\prd= V_{j-1}\prd \ox_{R_{j-1}\prd} R_{j}\prd$ and $F_{j}\prd = \sum_{i=1}^j R_{j}\prd x_{i}$.
Since $R_{j}\prd$ is \ff{} over $R$, $F_{j}\prd \sube U_{j}\prd \sube V_{j}\prd \sube E_{j}\prd$.
Moreover, $F_j = F''_j \otimes_{R''_j} R''$ and $V'' = (V \otimes_R R''_j) \otimes_{R''_j} R''=V''_j \otimes_{R''_j} R''.$
Therefore
\begin{equation}\label{eq2}
V''/F_j \simeq V''_j/F''_j \otimes_{R''_j} R''.
\end{equation}
Now, suppose for induction that $j >1$ and that $\mu(V''/F_{j-1})=\mu(V)-(j-1)$. Hence, by Eq. (\ref{eq2}),
$$\mu(V''_{j-1}/F''_{j-1}) = \mu(V''/F_{j-1})=\mu(V)-(j-1)=\mu(V''_{j-1})-(j-1).$$
Moreover, $R_{j}\prd= R_{j-1}\prd(\bZ_{j})$ is the Nagata extension of $R_{j-1}\prd$ (\wrt $\bZ_{j}$) and, as in Eq. (\ref{ejq}), we have
$$ V_{j}\prd/F_{j}\prd \simeq (V_{j-1}\prd/F_{j-1}\prd \ox_{R_{j-1}\prd} R_{j}\prd)/\langle \ol{x_{j}} \rangle, $$
where $\ol{x_{j}}= x_{j} + R_{j}\prd x_{1}+ \cdots + R_{j}\prd x_{j-1}$.
Further, $U''_{j}/F''_{j} \subseteq V''_{j}/F''_{j}$ with $U''_{j}/F''_{j}$ a reduction of $E''_{j}/F''_{j}$ and $\rank(E''_{j}/F''_{j})=e-j\geq 2$ (by Eq. (\ref{ejq2})).
Hence, by the previous lemma and by induction assumption,
$$ \mu(V_{j}\prd/F_{j}\prd) =  \mu(V_{j-1}\prd/F_{j-1}\prd) -1 = \mu(V_{j-1}\prd) - (j-1) -1 = \mu(V_{j-1}\prd)-j. $$
It follows by induction that
$$ \mu(V\prd/F_{t}) = \mu(V_{t}\prd/F_{t}\prd) =\mu(V_{t-1}\prd)-t = \mu(V)-t$$
(since $R\prd$ is \ff{} over $R_{j}\prd$ for all $j$), proving a).

b) is a direct consequence of a).
\end{proof}

\begin{proposition}\label{free3}
    Let $R$ be a \nol, let $E$ be a \fg{} \rmo{} having
    rank $e \geq 2$ and $U$ a reduction of $E$.
    Let $R\prd$, $E\prd$, $\sek{x}{e-1}$, $F$ be defined as before.
    Set, for each $1 \leq t \leq e-1$, $F_{t} = \sum_{i=1}^t R\prd
    x_{i}$. Then
    \begin{enumerate}
        \item  $\mu(E\prd/F_{t})= \mu(E)-t;$

        \item  $E\prd/F_{t}$ is a free $R\prd$-module \sse $E$ is a
        free $R$-module.
    \end{enumerate}
\end{proposition}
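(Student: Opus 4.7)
The plan is to deduce both parts from \refl{mu1} applied with the submodule $V = E$.

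Part (a) is immediate: specializing \refl{mu1}(a) to $V=E$ gives exactly $\mu(E\prd/F_t) = \mu(E) - t$.

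For part (b), the forward direction is where the real work is. Assuming $E$ is $R$-free of rank $e$, I would invoke \refl{mu1}(b) with $V=E$ to obtain $F_t \cap \fm R\prd E\prd = \fm R\prd F_t$. This says the classes of $x_1,\dots,x_t$ in $E\prd/\fm R\prd E\prd$ are linearly independent over the residue field $k(\bZ)$, so by Nakayama they extend to a minimal generating set $x_1,\dots,x_t, y_{t+1},\dots, y_{\mu(E)}$ of $E\prd$. Because $E$ is free of rank $e$ we have $\mu(E) = e = \rank_{R\prd} E\prd$, so this minimal generating set, having exactly $e$ elements, is automatically a basis. Consequently $E\prd = \bigoplus_{i=1}^t R\prd x_i \,\oplus\, \bigoplus_{j=t+1}^e R\prd y_j$ and $E\prd/F_t \simeq \bigoplus_{j=t+1}^e R\prd y_j$ is free.

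For the converse, if $E\prd/F_t$ is $R\prd$-free then, being projective, it splits the exact sequence $0 \ra F_t \ra E\prd \ra E\prd/F_t \ra 0$, so $E\prd$ is $R\prd$-free and hence $R\prd$-flat. Since $R\prd = R(\bZ)$ is faithfully flat over $R$, faithfully flat descent gives that $E$ is $R$-flat, and a finitely generated flat module over the Noetherian local ring $R$ is free. The only nontrivial ingredient throughout is \refl{mu1}(b), which is what turns the genericity of the $x_j$ into the structural statement that they are part of a minimal generating set of $E\prd$; once that is in hand both directions reduce to standard Nakayama and descent arguments.
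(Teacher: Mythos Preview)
Your proof is correct and structurally close to the paper's. Part (a) is handled identically. For part (b), your converse direction (splitting the exact sequence and descending freeness along the faithfully flat extension $R \to R\prd$) is exactly what the paper does, though the paper states it more tersely.

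The only difference is in the direction ``$E$ free $\Rightarrow E\prd/F_t$ free.'' You invoke \refl{mu1}(b) to show $x_1,\dots,x_t$ are part of a minimal generating set, then extend to a basis and quotient. The paper instead simply reuses part (a): if $E$ is free then $\mu(E)=\rank E = e$, so by (a) one has $\mu(E\prd/F_t)=e-t=\rank(E\prd/F_t)$, and a finitely generated module over a Noetherian local ring with $\mu=\rank$ is free. This is shorter and avoids the appeal to \refl{mu1}(b) altogether; your route works but is a detour, since the needed numerical equality is already in hand from (a).
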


\begin{proof}
a) is a particular case of the previous lemma.

b) Consider the natural exact sequence $0 \ra F_{t} \ra
    E\prd \ra E\prd/F_{t} \ra 0$.

    If $E\prd/F_{t}$ is free, then
    $E\prd \simeq F_{t} \+ E\prd/F_{t}$ and $E\prd$ is
    free over $R\prd$. Hence $E$ is free over $R$.
    Conversely, suppose that $E$ is free. Hence
    $\mu(E)=\rank E =e$.
    Thus, by a),
    $$ \mu(E\prd/F_{t}) = \mu(E) -t = e-t = \rank E\prd/F_{t}$$
    and $E\prd/F_{t}$ is free.
\end{proof}

\begin{theorem}\label{free2}
    Let $\Rmk$ be a \nol, let $E$ be a \fg{} \rmo{} having
    rank $e \geq 2$ and $U$ a reduction of $E$.
    Let $\ol{E}= E\prd/F$, $\ol{U} = U\prd/F$
    with $F \sub U\prd \sube E\prd$ as before.
    Then
    \begin{enumerate}
      \item  $\mu(\ol{E})= \mu(E)-e+1;$

      \item  $\ol{E}$ is a free $R\prd$-module \sse $E$ is a free
        $R$-module;

     \item  $\ol{E}$ has finite projective dimension \sse $E$ has finite projective dimension; if this is the case, $\pjd \ol{E} = \pjd E$;

      \item  Assume in addition that $E$ is torsionfree and $k$ is infinite. If $U$ is a minimal reduction of $E$
      then $\ol{U}$ is a minimal reduction of $\ol{E}$.
    \end{enumerate}
\end{theorem}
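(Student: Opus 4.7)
Parts (a) and (b) follow immediately by applying \refp{free3} with $t = e-1$, since $F = F_{e-1}$ and $\ol{E} = E\prd/F$; so I will focus on parts (c) and (d).

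For part (c), my plan is to build a minimal free resolution of $\ol{E}$ from one for $E\prd$ by simply discarding the free summand generated by $\sek{x}{e-1}$. Set $n = \mu(E\prd) = \mu(E)$. By \refl{mu1}(b), $F \cap \fm R\prd E\prd = \fm R\prd F$, so $\sek{x}{e-1}$ can be completed to a minimal generating set $\sek{x}{n}$ of $E\prd$. Let $\phi\colon P \tra E\prd$ be the corresponding minimal surjection, with $P$ a free $R\prd$-module of rank $n$ and basis $\sek{p}{n}$ mapping to $\sek{x}{n}$; set $K = \ker \phi$ and let $L \subseteq P$ be the free submodule spanned by $\sek{p}{e-1}$. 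Since $\phi|_{L}\colon L \to F$ is an isomorphism, $K \cap L = 0$, and the composition $P \to E\prd \tra \ol{E}$ factors through a surjection $\ol{\phi}\colon P/L \tra \ol{E}$. The snake lemma applied to the rows $0 \to L \to P \to P/L \to 0$ and $0 \to F \to E\prd \to \ol{E} \to 0$ yields $\ker \ol{\phi} \cong K$; as $P/L$ is free of rank $n - (e-1) = \mu(\ol{E})$ by part (a), $\ol{\phi}$ is minimal. Consequently, splicing any minimal resolution of $K$ produces minimal resolutions of $E\prd$ and $\ol{E}$ of the same length, giving $\pjd_{R\prd} \ol{E} = \pjd_{R\prd} E\prd$. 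Since the faithfully flat Nagata extension $R \to R\prd$ preserves both the finiteness and the value of projective dimension for finitely generated modules, I conclude $\pjd \ol{E} = \pjd E$.

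Part (d) will proceed by combining (a) with the analytic spread formulas. First, $\ol{U}$ is a reduction of $\ol{E}$: since Nagata extensions preserve reductions, $U\prd$ is a reduction of $E\prd$, and functoriality of the symmetric algebra yields a graded surjection $\CR(E\prd) \tra \CR(\ol{E})$ under which multiplication by $U\prd$ maps to multiplication by $\ol{U}$, so $\CR(\ol{E})_{r+1} = \ol{U} \cdot \CR(\ol{E})_r$ for $r \gg 0$. For minimality, part (a) applied to $U$ gives $\mu(\ol{U}) = \mu(U) - (e-1)$; as $k$ is infinite and $U$ is a minimal reduction of $E$, $\mu(U) = \ell(E)$, so Eq.~(\ref{Ebarra1}) yields $\mu(\ol{U}) = \ell(E) - (e-1) = \ell(\ol{E})$. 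Since $R\prd$ has infinite residue field $k(\bZ)$, this equality forces $\ol{U}$ to be a minimal reduction of $\ol{E}$.

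The main obstacle I anticipate is part (c): the critical move is choosing a presentation of $E\prd$ in which $\sek{x}{e-1}$ appear as part of a minimal generating set, and then verifying $K \cap L = 0$ so that the same syzygy module governs both resolutions. This rests squarely on \refl{mu1}(b) and the inductive construction of the $x_i$. The other parts follow rather directly from \refp{free3} and standard properties of reductions under faithfully flat base change.
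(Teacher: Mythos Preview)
Your proof is correct. Parts (a), (b), and (d) are essentially identical to the paper's argument; in (d) note that what you call ``part (a) applied to $U$'' is really \refl{mu1}(a) with $V=U$, which is exactly what the paper invokes.

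The only genuine difference is in part (c). The paper argues directly from the short exact sequence $0 \to F \to E\prd \to \ol{E} \to 0$: since $F$ is free, the finiteness of projective dimension transfers immediately, and for the equality the paper combines (b) with a standard homological fact (cited as \cite[Proposition~VII.1.8]{kunz}) that in such a sequence $\pjd E\prd = \pjd \ol{E}$ once neither module is free. Your route is more constructive: you extend the basis of $F$ to a minimal generating set of $E\prd$, observe via the snake lemma that $E\prd$ and $\ol{E}$ share the same first syzygy module $K$, and conclude that their minimal resolutions have the same length. Your approach has the advantage of being self-contained and of exhibiting explicitly why the resolutions match (in fact it yields the stronger statement that the first syzygies are isomorphic, which the paper's argument does not display). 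The paper's approach is shorter and avoids the bookkeeping with $L$ and the snake lemma, at the cost of a citation. Both are perfectly valid.
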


\begin{proof}
a) and b) are particular cases of the previous result.

c) Consider the exact sequence $0 \ra F \ra E\prd \ra \ol{E} \ra 0$.
Since $F$ is a free $R\prd$-module $\pjd F=0$ and so the first assertion follows.
The second is a consequence of b) and of \cite[Proposition~VII.1.8]{kunz}.

d)  We already observed that $\ol{U}$ is a reduction of $\ol{E}$.
    Now, we have
    $$ \mu(\ol{U}) = \mu(U) -e+1 = \ell(E) -e+1 =
    \ell(\ol{E}),$$
    (by \refl{mu1}, Eq. (\ref{mule}) and (\ref{Ebarra1})), and so $\ol{U}$ is a minimal reduction of $\ol{E}$.
\end{proof}

\section{Generic Bourbaki ideals and Fitting ideals}\label{gbifi}

\noindent
Given a \fgtfrmo $E$ having rank $e > 0$ and satisfying $\widetilde{G_{2}}$ (that is $\mu(E_{\frp})\leq e$ for all $\frp$ with $\dpt R_{\frp}=1$), and given a submodule $U$ of $E$ such that $\grd E/U \geq 2$, there exists a Nagata extension $R\prd=R(\bZ)$ and a free $R\prd$-module $F\simeq {R\prd}^{e-1}$ such that $\ol{E}=E\prd/F$ is torsionfree over $R\prd$ and having rank $1$. In this case $\ol{E} \simeq I_{U}(E)$ is an $R\prd$-ideal (see \cite[Proposition~3.2]{suv}). Such ideal $I=I_{U}(E)$ is called a {\it \gbiu}. If $U=E$ we simply write $I=I(E)$. The construction of a \gbi{} depends on the generating set considered for $U$ and on the set of variables, but is essentially unique.
See \cite[Remark~3.4]{suv}. In particular, if $U$ is a reduction of $E$ and $\grd F_{e}(E) \geq 2$ then there exists $I$ a \gbiu{} with $\grd I >0$ (see \cite[Remark~4.1]{cz3}).

\smallbreak

The aim of this section is to relate generic Bourbaki ideals with Fitting ideals. To do this we first observe that Fitting ideals are related, in a natural way, with the exterior algebra.
Moreover, we observe that whenever we consider a finite presentation $\var$ for $E\prd$, \wrt a generating set containing a basis of $F$ there is a submatrix $\psi$ of $\var$ with the first non-zero Fitting ideal having grade $\geq 1$. This Fitting ideal is isomorphic to a \gbie, and we use it to obtain information about the perfection of the ideal $I(E) \simeq E\prd/F$ (cf. \reft{fi}). This is the main result of this section.

\smallbreak

For any linear map $\apl{\Psi}{R^m}{R^n}$ the Fitting ideal $F_{n-m}(R^{n}/ \im \Psi) = I_{m}(\Psi)$ can be obtained as a image of a given linear map $\bwd^{n-m} R^{n} \ra \bwd^{n} R^{n} \simeq R$. Using the notation introduced in section \ref{div} (in particular Eq. (\ref{xH})), we first note that for complementary subsets $H, K \sube [n]$, we have
\begin{equation}\label{pext}
    x_{H} \wdg x_{K} = (-1)^{\varepsilon_{H,K}} x_{1} \wdg x_{2} \wdg
    \cdots \wdg x_{n},
\end{equation}
where $\varepsilon_{H,K}$ is the number of ordered pairs $(i,j) \in H
\x K$ such that $i >j$.

\begin{lemma}\label{ext3}
    Let $R$ be \nor{} and $\apl{\Psi}{R^m}{R^n}$ a linear map with $n \geq m$.
    Let $\sek{u}{m} \in R^{n}$ be defined by the columns of $\Psi$ and
    consider the map
    $$ \begin{array}{cccl}
        \theta \; : & \bwd^{n-m} R^{n} & \lra & \bwd^{n} R^{n} \\
    & x & \longmapsto & x \wdg u_{1} \wdg \cdots \wdg u_{m}.
    \end{array}$$
    Then $ \im \theta = I_{m}(\Psi). $
\end{lemma}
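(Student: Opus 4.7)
The plan is to evaluate $\theta$ on a basis of $\bwd^{n-m} R^n$ and observe that the resulting elements generate exactly $I_{m}(\Psi)$ under the canonical identification $\bwd^n R^n \simeq R$. First I would let $\sek{e}{n}$ denote the standard basis of $R^n$, so that $\{e_H : H \in \cP_{n-m}([n])\}$ is a basis of $\bwd^{n-m} R^n$ in the notation of Eq. (\ref{xH}), while $\bwd^n R^n$ is free of rank one on $e_1 \wdg \cdots \wdg e_n$. Writing $\Psi = (\alp_{ij})$ so that $u_j = \sum_{i=1}^{n} \alp_{ij} e_i$, the task reduces to computing $\theta(e_H) = e_H \wdg u_1 \wdg \cdots \wdg u_m$ explicitly for every $H \in \cP_{n-m}([n])$.

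Next I would invoke Eq. (\ref{pext1}) with the vectors $\sek{u}{m}$ to expand
$$ u_1 \wdg \cdots \wdg u_m = \sum_{K \in \cP_m([n])} \deter \Psi[K \mid 1, \ldots, m] \, e_K, $$
and substitute this into $\theta(e_H)$, obtaining $\theta(e_H) = \sum_{K \in \cP_m([n])} \deter \Psi[K \mid 1, \ldots, m]\, e_H \wdg e_K$. Because $|H|+|K|=n$, the product $e_H \wdg e_K$ vanishes unless $K$ is exactly the complement $[n] \setminus H$, and in that case Eq. (\ref{pext}) gives $e_H \wdg e_K = (-1)^{\varepsilon_{H,K}} e_1 \wdg \cdots \wdg e_n$. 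Therefore
$$ \theta(e_H) = (-1)^{\varepsilon_{H, [n]\setminus H}} \deter \Psi[[n] \setminus H \mid 1, \ldots, m] \, e_1 \wdg \cdots \wdg e_n. $$

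To conclude, I would note that as $H$ ranges over $\cP_{n-m}([n])$ its complement $[n]\setminus H$ ranges over all of $\cP_m([n])$, so the $R$-span of $\{\theta(e_H)\}$ coincides, up to the sign factors $(-1)^{\varepsilon_{H,[n]\setminus H}}$ which are units, with the span of the $m \times m$ minors of $\Psi$ times $e_1 \wdg \cdots \wdg e_n$. Since $\{e_H\}$ generates $\bwd^{n-m} R^n$, this span is precisely $\im \theta$, and under the isomorphism $\bwd^n R^n \simeq R$ sending $e_1 \wdg \cdots \wdg e_n \mapsto 1$ we identify it with $I_m(\Psi)$. The argument is essentially a direct computation, so no serious obstacle arises; the only point requiring mild care is the sign bookkeeping via $\varepsilon_{H,K}$, which is innocuous since signs do not affect the ideal generated.
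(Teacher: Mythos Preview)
Your proof is correct and follows essentially the same approach as the paper: evaluate $\theta$ on the basis $\{e_H : H \in \cP_{n-m}([n])\}$, expand $u_1 \wdg \cdots \wdg u_m$ via Eq.~(\ref{pext1}), observe that only the term $K = [n]\setminus H$ survives, and apply Eq.~(\ref{pext}) for the sign. The paper's argument is line-for-line the same computation.
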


\begin{proof}
    Suppose
    that $\Psi = (\alp_{ij})$ with respect to a basis $\seq{v}{m}$
    of $R^{m}$ and a basis $\seq{e}{n}$ of $R^{n}$.
    Hence $(e_{H})_{H \in \cP_{n-m}([n])}$ is a basis of $\bwd^{n-m}
    R^{n}$.
    Moreover, since
    $u_{j}= \sum_{i=1}^n \alp_{ij} e_{i}$
    $(j \in [m])$ then by \refeq{pext1},
    $$ u_{1} \wdg \cdots \wdg u_{m} = \sum_{K \in \cP_{m}([n])}
    \deter \Psi_{K,J} \, e_{K}, $$
    where $\Psi_{K,J} = \Psi[\sek{i}{m}|1,\ldots,m]$ for $K=\{
    i_{1} < \cdots < i_{m} \} \sub [n]$, $J = [m]$.
    Therefore, for $H \in \cP_{n-m}([n])$,
    \begin{align*}
        \tet(e_{H}) & = e_{H} \wdg \sum_{K \in \cP_{m}([n])}
    \deter \Psi_{K,J} \, e_{K} \\
    & = \sum_{K \in \cP_{m}([n])} \deter \Psi_{K,J} \,
    (e_{H} \wdg e_{K})\\
    & = \sum_{\substack{K \in \cP_{m}([n])\\K \cap H = \emptyset}}
    \deter\Psi_{K,J} \, (e_{H} \wdg e_{K}) &\text{(since $z \wdg z =0,
    \forall z \in R^{n}$)}\\
    & = \deter \Psi_{[n] \setminus H,J} \, (e_{H} \wdg e_{[n]
    \setminus H})\\
    & = (-1)^{\varepsilon_{[n] \setminus H,J}}
    \deter \Psi_{[n] \setminus H,J} \, (e_{1} \wdg \cdots \wdg e_{n}).
    &\text{(by \refeq{pext})}
    \end{align*}
    Hence $\tet$ is a matrix with one row and $\binom{n}{m}$ columns.
    Therefore,
    $$ \im \tet = \langle \deter \Psi_{[n] \setminus H,J} \rangle_{H
       \in \cP_{n-m}([n])} = I_{m}(\Psi), $$
    as required.
\end{proof}
\smallbreak

In the notation of section \ref{qm}, suppose that $\Rm$ is a
\nol{} and $E$ is a \fgr{} having rank $e \geq 2$.
Suppose also that
$U = \sum_{i=1}^k Ra_{i}$ is a reduction of $E$.  Further set
$R\pr = R[z_{ij} \mid 1 \leq i \leq k,\ 1 \leq j \leq e-1]=R[\bZ]$,
$R\prd = R\pr_{\fm R\pr}=R(\bZ)$ and  $E\prd = E \oxr R\prd$.
Furthermore set
$$ x_{j} = \sum_{i=1}^k z_{ij} a_{i} \; , \; F = \sum_{j=1}^{e-1}
R\prd x_{j} \; , \; \ol{E}=E\prd/F.
$$
We may extend the basis $\seq{x}{e-1}$ of $F$ to a generating set
$\{\sek{x}{n}\}$ of $E\prd$.
We note that, if necessary, we may choose it to be a minimal generating set of $E\prd$ since $\mu(E\prd/F) = \mu(E\prd)-e+1$ (by \reft{free2}).
Let
$$ {R\prd}^m \overset{\varphi}{\ra} {R\prd}^n \overset{h}{\ra} E\prd \ra 0 $$
be a finite free presentation of $E\prd$ \wrt the generators
$\sek{x}{n}$, and write also by $\var$ the respective relation
matrix.
We have
\begin{equation} \label{var}
[x_{1} \,\cdots\, x_{n}] \var = 0,
\end{equation}
and
$$ \rank \var := \rank \im \varphi = \rank K = n-e, $$
where $K= \ker h$ is the module of relations of the generating set $\{
\sek{x}{n} \}$.
Hence $\grd I_{n-e}(\var) \geq 1$ and $I_{n-e+1}(\var)=0$
(by \cite[Proposition~1.4.11]{bh}).
Moreover, the matrix $\var$ has a particular form.

\begin{lemma}\label{phi}
In the above conditions, each row $i \in \{1,\ldots,e-1\}$ of $\var$ is a $Q$-linear combination of the rows $e,\ldots,n$, where $Q = \Quot(R\prd)$. In particular, $\var$ is $Q$-equivalent to
$ \left[ \begin{array}{c}  0 \\ \hline \Psi
    \end{array} \right], $
where $\Psi=\var[e,\ldots,n | 1,\ldots,m]$ is a submatrix of $\var$ satisfying $\grd I_{n-e}(\Psi) \geq 1$.
\end{lemma}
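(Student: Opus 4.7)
The plan is to identify $\Psi$ as a finite free presentation matrix of the rank $1$ quotient $\ol E = E\prd/F$ and then to compare row ranks of $\var$ and of $\Psi$ over the total ring of fractions $Q$.

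First I would observe that, since the presentation $\var$ is chosen with respect to the generating set $\sek{x}{n}$, the canonical epimorphism $\apl{h}{{R\prd}^n}{E\prd}$ sends the first $e-1$ standard basis vectors $\sek{e}{e-1}$ to $\sek{x}{e-1}$, which are a basis of $F$. Quotienting both sides of the presentation by $F$ then yields
\[
\ol E \;=\; E\prd/F \;\cong\; {R\prd}^n \big/ \bigl(\var({R\prd}^m) + \langle \sek{e}{e-1}\rangle\bigr).
\]
Projecting onto the last $n-e+1$ coordinates identifies ${R\prd}^n/\langle \sek{e}{e-1}\rangle$ with ${R\prd}^{n-e+1}$ and carries $\var({R\prd}^m)$ precisely onto $\Psi({R\prd}^m)$. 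Hence there is an exact sequence
\[
{R\prd}^m \overset{\Psi}{\lra} {R\prd}^{n-e+1} \lra \ol E \lra 0,
\]
exhibiting $\Psi$ as a finite free presentation matrix of $\ol E$.

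Since $\rank \ol E = 1$, this forces $\rank \Psi = (n-e+1)-1 = n-e$, so by \cite[Proposition~1.4.11]{bh} one gets $\grd I_{n-e}(\Psi) \geq 1$ (and $I_{n-e+1}(\Psi) = 0$). For the row-dependency statement I would use the parallel fact that $\rank \var = n-e$, because $E\prd$ has rank $e$. The $Q$-rowspace of $\var$ thus has dimension $n-e$; but the $Q$-rowspace of $\Psi$, consisting of the last $n-e+1$ rows of $\var$, already has the same dimension $n-e$. The two rowspaces therefore coincide, and each of the top $e-1$ rows of $\var$ is a $Q$-linear combination of rows $e,\ldots,n$. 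Encoding these dependencies in an $(e-1)\times(n-e+1)$ matrix $A$ with entries in $Q$, the block matrix
\[
T = \begin{bmatrix} I_{e-1} & -A \\ 0 & I_{n-e+1} \end{bmatrix} \in GL_n(Q)
\]
satisfies $T\var = \left[\begin{array}{c} 0 \\ \hline \Psi \end{array}\right]$, giving the claimed $Q$-equivalence.

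I do not anticipate any serious obstacle. The only point demanding a little care is the clean identification of $\Psi$ as a presentation matrix of $\ol E$; once that is in place, everything else reduces to a dimension count over the field $Q$, together with the standard fact that the rank of a matrix detects the first non-vanishing ideal of minors with positive grade.
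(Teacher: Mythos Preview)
Your argument is correct and arrives at the conclusion by a somewhat cleaner route than the paper. The paper tensors $0 \to F \to E'' \to \ol E \to 0$ with $Q$, uses the splitting $E''\otimes Q \simeq (F\otimes Q) \oplus (\ol E\otimes Q)$ to write each $x_i \otimes 1$ (for $i \geq e$) explicitly as $\sum_{j<e}\gamma_{ij}(x_j\otimes 1) + \beta_i v$, and then reads off from $\sum_i \alpha_{ik}(x_i\otimes 1) = 0$ that row $i$ of $\varphi$ equals $-\sum_{j\geq e}\gamma_{ji}\cdot(\text{row }j)$ for each $i<e$; the grade bound on $\Psi$ is obtained only at the very end, from $I_{n-e}(\Psi)Q = I_{n-e}(\varphi)Q = Q$. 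You instead observe directly that $\Psi$ is a presentation matrix of $\ol E$---a fact the paper never isolates---and get $\grd I_{n-e}(\Psi)\geq 1$ immediately from $\rank\ol E = 1$, deferring the row dependence to a rank comparison. The presentation observation buys a more conceptual argument and is independently useful later on.

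One point needs care: you speak of a ``dimension count over the field $Q$'', but the hypotheses do not assume $R$ is a domain, so $Q = \Quot(R'')$ is only the total ring of fractions, and equality of two free rank-$(n-e)$ $Q$-submodules does not follow from containment alone. The repair is short. Since $\ol E\otimes Q \simeq Q$ is free, $\im\Psi_Q$ is a direct summand of $Q^{n-e+1}$; and since $x_1,\ldots,x_{e-1}$ remain linearly independent over $Q$, the projection $Q^n \to Q^{n-e+1}$ restricts to an isomorphism $\im\varphi_Q \to \im\Psi_Q$. Extending the inverse to a $Q$-linear map $\sigma\colon Q^{n-e+1}\to Q^n$ yields $\sigma\Psi = \varphi$ over $Q$, which is precisely the desired row relation, and your matrix $A$ is then the block formed by the first $e-1$ rows of $\sigma$.
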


\begin{proof}
    The exact sequence $0 \ra F \ra E\prd \ra \ol{E} \ra 0$ yields the
    exact sequence
    $$ 0 \ra F \ox_{R\prd} Q \ra E\prd \ox_{R\prd} Q \ra \ol{E}
    \ox_{R\prd} Q \ra 0.$$
    Since $\ol{E} \ox_{R\prd} Q$ is free
    $$ E\prd \ox_{R\prd} Q \simeq (F \ox_{R\prd} Q) \+ (\ol{E}
    \ox_{R\prd} Q). $$
    On the other hand, $\ol{\var} = \var \ox_{R\prd} \Id$
    presents $E\prd \ox_{R\prd} Q$ with respect to the generators $x_{1}
    \ox 1, \ldots ,$ $x_{n} \ox 1$ and
    we have the exact sequence
    $$ Q^m \overset{\ol{\var}}{\ra} Q^n \ra (F \ox_{R\prd} Q) \+ (\ol{E}
    \ox_{R\prd} Q) \ra 0. $$
    Moreover, $(x_{1} \ox 1,\ldots , x_{e-1} \ox 1)$ is a basis of $F
    \ox_{R\prd} Q$. Now, let $(v)$ be a basis of $\ol{E} \ox_{R\prd}
    Q \simeq Q$ and let $\beta_{e}, \ldots, \beta_{n}$ be elements
    in $Q$ such that, for all $i \geq e$,
    $$ x_{i} \ox 1 = \sum_{j=1}^{e-1}\gamma_{ij}(x_{j}\ox 1)+\beta_{i} v, $$
    for some $\gamma_{ij} \in Q$.
    Let $\seq{e}{n}$ be the canonical basis of $Q^n$.
    Let 
    $\ol{h} = h \ox \Id$ and suppose that $\var = (\alp_{ij})_{i,j}$. Since 
    $\ol{h}\ol{\var}=0$ then, for all $1\leq k\leq m$,
    \begin{align*}
    0 & = \sum_{i=1}^{e-1} \alpha_{ik}(x_{i}\ox 1) + \sum_{i=e}^{n} \alpha_{ik}(x_{i}\ox 1) \\
    & = \sum_{i=1}^{e-1} \alpha_{ik}(x_{i}\ox 1) + \sum_{i=e}^{n} \alpha_{ik}\left(\sum_{j=1}^{e-1}\gamma_{ij}(x_{j}\ox 1)+\beta_{i} v\right) \\
    & = \sum_{i=1}^{e-1} \alpha_{ik}(x_{i}\ox 1) + \sum_{i=1}^{e-1}\left(\sum_{j=e}^{n}\alpha_{jk}\gamma_{ji}\right)(x_{i}\ox 1)+\left(\sum_{i=e}^{n}\alpha_{ik}\beta_{i}\right)v \\
    & = \sum_{i=1}^{e-1} \left(\alpha_{ik}+\sum_{j=e}^{n}\alpha_{jk}\gamma_{ji}\right)(x_{i}\ox 1)+\left(\sum_{i=e}^{n}\alpha_{ik}\beta_{i}\right)v.
    \end{align*}
   Therefore $\sum_{i=e}^{n}\alpha_{ik}\beta_{i}=0$ and each row $i$ of $\var$, with $1 \leq i \leq e-1$, is a $Q$-linear combination of the rows $e,\ldots,n$. That is
$$ \var = \left[ \begin{array}{c|c} I_{e-1} & - \Gamma^{T} \\ \hline 0 & I_{n-e+1} \end{array} \right] \left[ \begin{array}{c}  0 \\ \hline \Psi
    \end{array} \right], $$
where
$\Gamma =(\gamma_{k\ell})$ is a $Q$-matrix of type $(e-1) \x (n-e+1)$
and $\Psi= \var[e,\ldots,n | 1,\ldots,m]$.
Moreover, $I_{n-e}(\Psi) \cdot Q = I_{n-e}(\var) \cdot Q =Q$, and so $\grd I_{n-e}(\Psi) \geq 1$, as claimed.
\end{proof}

Let $\psi$ be an $(n-e+1) \times (n-e)$ submatrix of
$\varphi$ satisfying $\grd I_{n-e}(\psi) \geq 1$ (exists by Lemma \ref{phi}).
Since $\rank \im \psi =
n-e$ (again by \cite[Proposition~1.4.11]{bh}), $\rank \ker \psi = 0$.
But $\ker \psi$ is torsionfree, hence $\ker \psi = 0$ and $\psi$
represents an injective linear map
$$ 0 \ra {R\prd}^{n-e} \overset{\psi}{\ra} {R\prd}^{n-e+1}$$
with respect to the canonical bases.
Moreover, using Eq. (\ref{var}),
we get for any $j=1,\ldots,m$
$$ \sum_{i=e}^n x_{i}
\alp_{ij} = - \sum_{i =1}^{e-1} x_{i}
\alp_{ij} \in F.$$
Therefore, for any $j$,
$\sum_{i =e}^n \ol{x_{i}} \alp_{ij} = 0,$
where $\ol{x_{i}}= x_{i} + F$. Hence
\begin{equation} \label{var2}
[\ol{x_{e}}\, \cdots\, \ol{x_{n}}] \psi = 0.
\end{equation}

On the other hand, let $\sek{u}{n-e} \in {R\prd}^{n-e+1}$ be the
columns of $\psi$ and consider the map
$\apl{\theta}{\bwd^1 {R\prd}^{n-e+1}}{\bwd^{n-e+1} {R\prd}^{n-e+1}}
\simeq R\prd$
given by $x \mapsto x \wdg u_{1} \wdg \cdots \wdg u_{n-e}$.
We have, by \refl{ext3},
$$ \im \tet = I_{n-e}(\psi). $$
Moreover,
$$ x \in \im \psi \iff  x \in \langle \sek{u}{n-e} \rangle \Lra x
\wdg u_{1} \wdg \cdots \wdg u_{n-e} = 0 \iff x \in \ker \tet, $$
proving that $\im \psi \sube \ker \tet$.
Therefore, there exists a complex of finite free $R\prd$-modules
$$ 0 \ra {R\prd}^{n-e} \overset{\psi}{\ra} {R\prd}^{n-e+1}
\overset{\tet}{\ra} R\prd \ra 0 $$
with $\im \tet = I_{n-e}(\psi) \sub R\prd$.

Now, suppose in addition that $E$ satisfies $\grd F_{e}(E) \geq 2$. In this case, as we already observed, $\ol{E}=E''/F$ is a \fg{} \tf $R''$-module having rank $1$.
Hence, $\ol{E} \simeq I_{U}(E)=I$ a \gbiu.
Since $I_{n-e}(\psi) = \langle | \psi_{e}|,\ldots, | \psi_{n}|
\rangle$, where $\psi_{i}$ is obtained from $\psi$ by deleting the
$(i-e+1)$-th row $(i=e,\ldots,n)$, and $I= \langle
\ol{x_{e}},\ldots,\ol{x_{n}} \rangle$ the mapping $\ol{x_{i}}
\mapsto (-1)^{i-e}|\psi_{i}|$ $(i=e,\ldots,n)$ extends to an
$R\prd$-epimorphism
$\apl{\rho}{I}{I_{n-e}(\psi)}$. In fact, if $\sum_{i=e}^n
\beta_{i} \ol{x_{i}} = 0$ and writing $\psi\pr =
\begin{bmatrix}
    \beta_{e} &  \\
    \vdots &  \psi \\
    \beta_{n}  &
\end{bmatrix}$
then
$[\ol{x_{e}} \, \cdots \, \ol{x_{n}}] \psi\pr =0$,
and we deduce that
$\ol{x_{i}} | \psi\pr | = 0$, $(i=e,\ldots,n)$.
Since $I= \langle
\ol{x_{e}},\ldots,\ol{x_{n}} \rangle$, $I |\psi\pr|=0$ and so
$|\psi\pr| \in \ann_{R\prd}(I)$. But $\rank I = 1 >0$, hence
$\ann_{R\prd}(I)=0$.
Therefore
$$ 0 = |\psi\pr| = \sum_{i=e}^n \beta_{i} (-1)^{i-e} |\psi_{i}|, $$
(by Laplace's theorem) proving that $\ol{x_{i}}
\mapsto (-1)^{i-e}|\psi_{i}|$ is well defined.
Hence there exists an $R\prd$-epimorphism $I \overset{\rho}{\ra}
I_{n-e}(\psi)$.
Moreover, since $I$, $I_{n-e}(\psi)$
are $R\prd$-ideals of rank $1$ (and every ideal is a torsionfree
$R\prd$-module),
$\ker \rho = 0$ and we have
\begin{equation}\label{ii}
I \simeq I_{n-e}(\psi).
\end{equation}

Now, suppose that $\grd I_{n-e}(\psi) \geq 2$.
By Hilbert-Burch Theorem (\cite[Theorem 1.4.17]{bh}),
$I_{n-e}(\psi)$ has the free resolution
$$ 0 \ra {R\prd}^{n-e} \overset{\psi}{\ra} {R\prd}^{n-e+1}
\overset{\tet}{\ra} I_{n-e}(\psi) \ra 0.$$
Moreover, $I_{n-e}(\psi)\simeq R\prd$ or $I_{n-e}(\psi)$ is perfect of
grade $2$. On the other hand,
by Eq. (\ref{ii}),
$I_{n-e}(\psi) \simeq I_{U}(E)=I$ a \gbiu{} ($I_{U}(E)\simeq E\prd/F$).
If $E$ is not free,
the Bourbaki ideal $I \simeq \langle \ol{x_{e}},\ldots, \ol{x_{n}}
\rangle$ is also not free (by \reft{free2}). As a consequence, $I$ has a free resolution
$$ 0 \ra {R\prd}^{n-e} \overset{\psi}{\ra} {R\prd}^{n-e+1}
\ra I \ra 0, $$
and we get that $I=I_{n-e}(\psi)$ is perfect of grade $2$.
Hence $\psi$ defines a presentation of $I$,
with respect to $\ol{x_{e}}, \ldots, \ol{x_{n}}$.
\smallbreak
We have proved the following result.

\begin{theorem}\label{fi}
Let $R$ be a \nol, $E$ be a \fgr{} having rank $e \geq 2$ and $U$ a
reduction of $E$.
Let $R\prd$, $E\prd$ and $F= {\+}_{i=1}^{e-1}R\prd
x_{i}$ as before.
Let $\sek{x}{n}$ be a generating set of $E\prd$ containing the
basis $\sek{x}{e-1}$ of $F$.
Let $\var$ be an $n \times m$ matrix
presenting $E\prd$
with respect to the generators $\sek{x}{n}$ and let $\psi$ be an $(n-e+1)\x(n-e)$ submatrix of $\var$ satisfying $\grd I_{n-e}(\psi) \geq 1$.
Then there exists a complex of the form
$$ 0 \ra {R\prd}^{n-e} \overset{\psi}{\ra} {R\prd}^{n-e+1}
\ra I_{n-e}(\psi) \ra 0. $$
Suppose in addition that $E$ satisfies $\grd F_{e}(E) \geq 2$.
Then $E\prd/F$ is isomorphic to an $R\prd$-ideal $I=I_{U}(E)$ (a \gbiu) and
$$I \simeq I_{n-e}(\psi).$$

Moreover, if $E$ is not free
and $\grd I_{n-e}(\psi) \geq 2$ then $I$ is perfect of grade $2$ with a finite free resolution
$$ 0 \ra {R\prd}^{n-e} \overset{\psi}{\ra} {R\prd}^{n-e+1}
\ra I \ra 0$$
and $\psi$ defines a presentation of $I$ with respect to the generators $\ol{x_{e}}, \ldots, \ol{x_{n}}$,
with $I=I_{n-e}(\psi)$.
\end{theorem}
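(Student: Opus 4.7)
The plan is to assemble the statement from the three ingredients that have already been prepared in the preceding paragraphs: Lemma \ref{phi} on the block structure of $\varphi$ over $Q=\Quot(R'')$, Lemma \ref{ext3} realizing $I_{n-e}(\psi)$ as the image of an exterior-algebra map, and Theorem \ref{free2} on the minimal generation of $\ol{E}=E''/F$. I will treat the three assertions separately but in a single thread.

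First, I would verify the complex. By Lemma \ref{phi} one may choose an $(n-e+1)\times(n-e)$ submatrix $\psi$ of $\varphi$ with $\grade I_{n-e}(\psi)\ge 1$. Hilbert--Burch-type rank considerations, namely $\rank\im\psi=n-e$ together with \cite[Proposition~1.4.11]{bh}, force $\rank\ker\psi=0$; since $\ker\psi$ sits inside a free $R''$-module, $\ker\psi=0$, so $0\to{R''}^{n-e}\xrightarrow{\psi}{R''}^{n-e+1}$ is exact. Writing $u_1,\ldots,u_{n-e}$ for the columns of $\psi$ and applying Lemma \ref{ext3} to the map $\theta(x)=x\wedge u_1\wedge\cdots\wedge u_{n-e}$, I get $\im\theta=I_{n-e}(\psi)$, and the identity $x\wedge u_1\wedge\cdots\wedge u_{n-e}=0$ for $x\in\im\psi$ shows $\im\psi\subseteq\ker\theta$. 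This yields the stated complex.

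Next, under the assumption $\grade F_e(E)\ge 2$, one has that $\ol{E}=E''/F$ is torsionfree of rank $1$ by \cite[Proposition~3.2]{suv}, so $\ol{E}\simeq I$ for a \gbiu{} $I=I_U(E)$. Using the relation $[x_1\cdots x_n]\varphi=0$ together with the block decomposition from Lemma \ref{phi}, modding out $F$ gives $[\ol{x_e}\cdots\ol{x_n}]\psi=0$. I then define $\rho\col I\to I_{n-e}(\psi)$ by $\ol{x_i}\mapsto(-1)^{i-e}|\psi_i|$, where $\psi_i$ is $\psi$ with its $(i-e+1)$-th row deleted. The key point is well-definedness: given a relation $\sum\beta_i\ol{x_i}=0$, adjoining the column $(\beta_e,\ldots,\beta_n)^T$ to $\psi$ produces a square matrix $\psi'$ with $|\psi'|\in\ann_{R''}(I)=0$ (since $\rank I=1$), and Laplace expansion along the first column then gives the required linear dependence of the $|\psi_i|$. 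Rank $1$ on both sides and the torsionfreeness of $I_{n-e}(\psi)$ make $\rho$ an isomorphism.

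For the final clause, assume further $\grade I_{n-e}(\psi)\ge 2$ and $E$ not free. Then Theorem \ref{free2}(b) says $\ol{E}\simeq I$ is not free, so $I\not\simeq R''$. Since $\grade I_{n-e}(\psi)\ge 2$, the Hilbert--Burch theorem \cite[Theorem~1.4.17]{bh} applies to the complex of the first step and produces the free resolution
\[
0\to{R''}^{n-e}\xrightarrow{\psi}{R''}^{n-e+1}\to I_{n-e}(\psi)\to 0,
\]
showing $I_{n-e}(\psi)$ is perfect of grade $2$; transporting via the isomorphism $I\simeq I_{n-e}(\psi)$ shows $\psi$ presents $I$ with respect to the generators $\ol{x_e},\ldots,\ol{x_n}$ and $I=I_{n-e}(\psi)$. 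The main obstacle throughout is the well-definedness of $\rho$: one must pin down that the only candidate obstruction, the determinant of the augmented matrix, lies in $\ann_{R''}(I)$ and is therefore zero, which rests on the rank-$1$ hypothesis that comes from the generic Bourbaki construction together with $\grade F_e(E)\ge 2$.
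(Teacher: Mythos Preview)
Your proposal is correct and follows essentially the same route as the paper: injectivity of $\psi$ via \cite[Proposition~1.4.11]{bh}, the exterior-algebra map $\theta$ from Lemma~\ref{ext3} to build the complex, the explicit map $\rho\colon \ol{x_i}\mapsto(-1)^{i-e}|\psi_i|$ with well-definedness checked by adjoining the relation column and using $\ann_{R''}(I)=0$, and the Hilbert--Burch theorem together with Theorem~\ref{free2}(b) for the perfect-of-grade-$2$ conclusion. The only small point to make explicit is that $|\psi'|\in\ann_{R''}(I)$ follows from $[\ol{x_e}\cdots\ol{x_n}]\psi'=0$ via the adjugate (Cramer's rule), which the paper spells out; your compressed phrasing is fine but that is the underlying step.
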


\begin{corollary}\label{fi1}
    Let $R$ be a \nol, $E$ an ideal module having rank $e \geq 2$
    and $U$ a reduction of $E$.
    Then any \gbiu{} is isomorphic to a Fitting ideal.
\end{corollary}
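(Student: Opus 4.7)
The corollary should follow almost immediately from the earlier Theorem~\ref{fi} together with the definition of ideal module and the relationship (\ref{FeF0}) between the first non-zero Fitting ideal and the support of $G/E$. The plan is therefore to verify that the hypotheses of \reft{fi} are met in the ideal module setting, and then to recognize the ideal $I_{n-e}(\psi)$ produced by \reft{fi} as a Fitting ideal.

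First I would recall that, by definition, an ideal module $E \sube G \simeq R^{e}$ satisfies $\grd G/E \geq 2$. By Eq.~(\ref{FeF0}) this yields $\grd F_{e}(E) \geq 2$, which is precisely the extra hypothesis required to invoke the second half of \reft{fi}. Since $E$ is in particular a \fgtfrmo{} having rank $e \geq 2$ and $U$ is a reduction of $E$, the construction of section~\ref{qm} applies: passing to the Nagata extension $R\prd$, writing $F = \bigoplus_{j=1}^{e-1} R\prd x_{j}$, and choosing a generating set $\sek{x}{n}$ of $E\prd$ extending $\sek{x}{e-1}$, we obtain the presentation matrix $\var$ and the submatrix $\psi$ of size $(n-e+1)\times(n-e)$ with $\grd I_{n-e}(\psi) \geq 1$ granted by \refl{phi}.

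Now applying the conclusion of \reft{fi}, any \gbiu{} $I = I_{U}(E)$ is isomorphic to the determinantal ideal $I_{n-e}(\psi)$ of the submatrix $\psi$. The final step is to observe that this determinantal ideal is itself a Fitting ideal: since $\psi$ can be viewed as the map $\apl{\psi}{{R\prd}^{n-e}}{{R\prd}^{n-e+1}}$, the cokernel $M := \coker \psi$ admits $\psi$ as a finite free presentation, and by the very definition of Fitting ideals one has
\begin{equation*}
I_{n-e}(\psi) = F_{1}(M).
\end{equation*}
Combining this with the isomorphism from \reft{fi} gives $I \simeq F_{1}(\coker \psi)$, exhibiting the \gbiu{} as a Fitting ideal, as required.

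I do not expect any real obstacle here: the corollary is a direct distillation of \reft{fi} once one notes that the ideal module hypothesis supplies $\grd F_{e}(E) \geq 2$. The only point worth stating explicitly is the identification of the maximal minor ideal of an $(n-e+1)\times(n-e)$ matrix with a Fitting ideal of its cokernel, so that the word \emph{Fitting} in the statement is justified. Note that if additionally $\grd I_{n-e}(\psi) \geq 2$ and $E$ is not free, the stronger part of \reft{fi} identifies $I$ itself (not merely an isomorphic copy) with the Fitting ideal $F_{1}(I)$ coming from its Hilbert--Burch resolution, which is a natural refinement but not needed for the corollary as stated.
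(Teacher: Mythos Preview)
Your proof is correct and follows essentially the same approach as the paper: the paper's own argument simply notes that the ideal module hypothesis gives $\grd F_{e}(E)\geq 2$ and then invokes \reft{fi} to obtain $I\simeq I_{n-e}(\psi)$, declaring this a Fitting ideal without further comment. Your additional sentence identifying $I_{n-e}(\psi)=F_{1}(\coker\psi)$ makes explicit what the paper leaves implicit, and is a welcome clarification rather than a different route.
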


\begin{proof}
    Let $I=I_{U}(E) \sub R\prd$ be a \gbiu. Since $\grd F_{e}(E)
    \geq 2$ then $I \simeq I_{n-e}(\psi)$, where $\psi$ is a submatrix
    of a matrix $\var$ presenting $E\prd$ (by the theorem above), as
    required.
\end{proof}

For certain modules \gbi s can be chosen to have grade $\geq 2$, and we called them {\it \ggbi s} (see \cite{cz3}). In fact, it is proved in \cite[Proposition~3.2]{suv} that a \fg{} \rmo{} $E$ having rank $e>0$ and satisfying $\grd F_{e}(E) \geq 2$ has a \ggbi{} \sse $E$ is orientable.
In particular, ideal modules also have \ggbi s.
As already observed, $V(F_{e}(E)) = \supp G/E = \supp R/F_{e}(E)$ in the case where $E \sube G \simeq R^{e}$ is an ideal module. In particular, $V(F_{1}(I)) = \supp R/I = V(I)$ for any $R$-ideal $I$ with $\grd I \geq 2$.
Moreover, if $I \simeq E\prd/F$ is a \ggbie{} then, clearly,
\begin{equation}\label{varf1fe}
V(F_{e}(E\prd)) \sube V(F_{1}(I)) = V(I).
\end{equation}

The following result characterizes ideal modules with projective dimension equal to one via \gbi s. Using the notations of Theorem \ref{fi}, this shows in particular that if $E$ is not free and $\grd I_{n-e}(\psi) \geq 2$ then $\pjd E=1$.

\begin{proposition}\label{fi2}
Let $R$ be a \nol, $E$ an ideal module having rank $e \geq 2$ and $U$ a reduction of $E$. Then the following are equivalent:
\begin{enumerate}
\item $\pjd E=1$.
\item Any \ggbie{} (\wrt $U$) is perfect of grade $2$.
\item There exists a \gbie{} (\wrt $U$) which is  perfect of grade $2$.
\end{enumerate}
\end{proposition}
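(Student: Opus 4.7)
The plan is to translate each condition through the $R\prd$-module isomorphism $I \simeq \ol{E}$, where $\ol{E} = E\prd/F$, using the projective-dimension formula $\pjd_{R\prd} \ol{E} = \pjd_R E$ provided by \reft{free2}(c).

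First I would set up the preliminaries. Since $E$ is an ideal module, it is orientable and $\grd F_e(E) \geq 2$; hence good generic Bourbaki ideals $I = I_U(E)$ of $E$ with respect to $U$ exist, and any generic Bourbaki ideal of $E$ with respect to $U$ is by construction $R\prd$-isomorphic to $\ol{E}$. Combining faithful flatness of $R \to R\prd$ (which yields $\pjd_{R\prd} E\prd = \pjd_R E$) with \reft{free2}(c) gives the chain
\[
\pjd_{R\prd} I \;=\; \pjd_{R\prd} \ol{E} \;=\; \pjd_{R} E.
\]

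For (a)$\Rightarrow$(b), take any good generic Bourbaki ideal $I$ of $E$ with respect to $U$, so that $\grd I \geq 2$. By the above, $\pjd_{R\prd} I = 1$, equivalently $\pjd_{R\prd}(R\prd/I) = 2$. Since the grade of any ideal $J$ with $R\prd/J$ of finite projective dimension satisfies $\grd J \leq \pjd_{R\prd}(R\prd/J)$, we get $\grd I \leq 2$; combined with $\grd I \geq 2$ this forces $\grd I = 2 = \pjd_{R\prd}(R\prd/I)$, so $I$ is perfect of grade $2$. The implication (b)$\Rightarrow$(c) is immediate from the existence of good generic Bourbaki ideals for ideal modules. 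For (c)$\Rightarrow$(a), let $I$ be a generic Bourbaki ideal of $E$ with respect to $U$ which is perfect of grade $2$; then $\pjd_{R\prd}(R\prd/I) = 2$, so $\pjd_{R\prd} I = 1$, and the displayed equality yields $\pjd_R E = 1$.

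The main obstacle is almost entirely bookkeeping: one needs to confirm that the isomorphism $I \simeq \ol{E}$ really exists under the ideal-module hypothesis, that $\pjd$ is faithfully transported across $R \to R\prd$ and through the exact sequence $0 \to F \to E\prd \to \ol{E} \to 0$ with $F$ free, and that the grade inequality $\grd J \leq \pjd_{R\prd}(R\prd/J)$ applies. All three are either standard or supplied by the preceding sections, so the proof reduces to an essentially formal translation.
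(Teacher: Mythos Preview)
Your proof is correct and follows essentially the same route as the paper: both reduce to the identity $\pjd_{R\prd}(R\prd/I) = \pjd_R E + 1$ obtained from \reft{free2}(c), together with the standard inequality $\grd I \leq \pjd_{R\prd}(R\prd/I)$. The paper's version is simply more compressed, leaving the case-by-case verification of the three implications implicit.
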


\begin{proof}
Suppose that $\ol{E} \simeq I_{U}(E)=I$ is an $R''$-ideal.
By \reft{free2}
$$ \pjd R''/I = \pjd \ol{E} +1 = \pjd E+1. $$
Since $\grd I \leq \pjd R''/I$ the equivalences then follow.
\end{proof}

\begin{remark}\label{ht}
Let $I$ be a perfect ideal of grade $2$. Then, $\Ht I \leq 2$ by \cite[Theo\-rem~1.1.18]{wv1}, and because $\grd I \leq \Ht I$ we have that $\Ht I = 2$. This can be extended to modules of projective dimension $1$ in the following way: Let $E$ be a \fgrmo{} having rank $e$. Suppose that $\pjd E=1$. Then
$\Ht F_{e}(E)\leq e+1.$
\end{remark}

\begin{proof}
Let $0 \ra R^m \overset{\var}{\ra} R^n \ra E \ra 0$ be a (minimal) free resolution of $E$. Since $\rank E=e$ then $m=n-e$. Moreover,
$I_{n-e}(\varphi)\neq 0$ and $I_{n-e+1}(\varphi)= 0$. Thus, by \cite[Theorem~1.1.18]{wv1},
$$ \Ht F_{e}(E)=\Ht I_{n-e}(\var) \leq
e+1$$
with $m=n-e=t$.
\end{proof}

In \cite{cz2} we defined the analytic deviation, for an ideal module $E \subsetneq G \simeq R^{e}$ with positive rank $e>0$, as $\ad(E)=\lE-e+1-\Ht F_{e}(E)$. We say that $E$ is \eq{} if $\ad(E)=0$. Moreover, we say that $E$ is a module of the principal class if $\mu(E)=\Ht F_{e}(E)+e-1$ and that $E$ is a \ci{} if $\mu(E)=\grd F_{e}(E)+e-1$.
It is clear that these notions agree with the correspondent ones for ideals. Moreover, since $\mu(E) \geq \lE$ (by Eq. (\ref{mule})), $\lE \geq \Ht F_{e}(E) +e -1$ (by \cite[Proposition~3.12]{cz2}), and $\grd F_{e}(E)= \grd G/E \geq 2$, then
\begin{equation} \label{leqs}
\mu(E) \geq \lE \geq \Ht F_{e}(E) +e -1 \geq \grade F_{e}(E) +e -1 \geq e+1.
\end{equation}

Suppose that $I$ is a \ggbi{} of $E$. Since $I \simeq E''/F$ we always have $\mu(E) \leq \mu(I) +e-1$. On the other hand, from Eq. (\ref{varf1fe}) we have that $\Ht I \leq \Ht F_e(E'') = \Ht F_e(E)$. Hence, if $I$ is an ideal of the principal class [\ci{} or \eq,
respectively] then $E$ satisfies the same property.
However we are interested in the other situation, i.e. to know when a \gbie{} satisfies the same property as the module $E$. If $\dim R = 2$ this is always true, since then $E$ is of the principal class if and only if $\mu (E) = e + 1$ and so by Theorem \ref{free2} we have $\mu (I) = 2 = \Ht (I)$ (note that $R$ must be Cohen-Macaulay and so to be of the principal class is the same as to be a \ci{}). And similarly for the equimultiple property by using the equality (\ref{Ebarra1}).
In fact, in the general case case where $\mu(E)$ or $\lE$ reach the minimum values we have the following result.

\begin{proposition}\label{e+1}
Let $\Rmk$ be a \nol{} and let $E \subn G \simeq R^e$ be an ideal module having rank $e >0$.
\begin{enumerate}
\item If $\mu(E)=e+1$ then $E$ is a module of the principal class.
\item If $\lE=e+1$ and $\frk$ is infinite then $E$ is an \eq{} module.
\end{enumerate}
In both cases, if $I$ is a \ggbie \, then $I$ satisfies the same properties as $E$ and we have
\begin{equation}\label{ig}
 \grd F_{e}(E)=\Ht F_{e}(E)=2=\Ht I = \grd I.
 \end{equation}
\end{proposition}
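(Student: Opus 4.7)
The plan is to exploit the chain of inequalities recalled in (\ref{leqs}):
\begin{equation*}
\mu(E) \geq \ell(E) \geq \Ht F_{e}(E) + e - 1 \geq \grd F_{e}(E) + e - 1 \geq e+1,
\end{equation*}
and observe that each of the two hypotheses forces the chain to collapse. For (a), $\mu(E)=e+1$ makes every term equal to $e+1$; in particular $\mu(E)=\Ht F_{e}(E)+e-1$, which is precisely the definition of $E$ being of the principal class, and we can read off $\grd F_{e}(E)=\Ht F_{e}(E)=2$. For (b), $\ell(E)=e+1$ forces equality from the second term on, so again $\Ht F_{e}(E)=\grd F_{e}(E)=2$; the analytic deviation is then
\begin{equation*}
\ad(E)=\ell(E)-(e-1)-\Ht F_{e}(E) = (e+1)-(e-1)-2 = 0,
\end{equation*}
so $E$ is equimultiple.

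For the transfer of these properties to a \ggbie{} $I$, I would use the numerical relations already at hand: Theorem~\ref{free2} gives $\mu(I)=\mu(E)-e+1$, while equation~(\ref{Ebarra1}) gives $\ell(I)=\ell(E)-e+1$. Thus under (a), $\mu(I)=2$, and under (b), $\ell(I)=2$.

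To control the height of $I$, note that Fitting ideals commute with the flat base change $R\to R\prd$, so $F_{e}(E\prd)=F_{e}(E)\cdot R\prd$ and $\Ht F_{e}(E\prd)=\Ht F_{e}(E)$. Combining the inclusion (\ref{varf1fe}), $V(F_{e}(E\prd))\subseteq V(I)$, with the above gives $\Ht I\leq \Ht F_{e}(E)=2$. Since $I$ is a \ggbi, $\grd I\geq 2$, whence $2\leq \grd I\leq \Ht I\leq 2$, forcing $\Ht I=\grd I=2$. Under (a), $\mu(I)=2=\Ht I$ means $I$ is of the principal class; under (b), $\ell(I)=2=\Ht I$ means $I$ is equimultiple. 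The string of equalities in (\ref{ig}) then collects what has been established.

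The whole argument is bookkeeping with (\ref{leqs}) and the transfer formulas from Sections \ref{red}--\ref{qm}; I do not foresee any serious obstacle. The only point that deserves a line of care is the flat-base-change compatibility $F_{e}(E\prd)=F_{e}(E)\cdot R\prd$ together with the preservation of heights under the faithfully flat Nagata extension, which is what allows one to read $\Ht I\leq \Ht F_{e}(E)$ directly from (\ref{varf1fe}).
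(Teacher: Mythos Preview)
Your argument is correct and follows the same overall scheme as the paper: collapse the chain (\ref{leqs}) and then use the transfer formulas $\mu(I)=\mu(E)-e+1$ and $\ell(I)=\ell(E)-e+1$. Two small differences are worth pointing out. For (b), the paper does not read off $\ad(E)=0$ directly from the collapsed chain as you do; instead it takes a minimal reduction $U$ (this is where the infinite residue field is used), applies (a) to $U$ since $\mu(U)=\ell(E)=e+1$, and then invokes \cite[Proposition~4.3]{cz2} to conclude that $E$ is equimultiple. Your route is more self-contained and avoids that external citation. For the bound on $\Ht I$, the paper uses the standard inequality $\Ht I\leq \ell(I)$ for ideals rather than your appeal to (\ref{varf1fe}) and the flat base change $F_e(E\prd)=F_e(E)\cdot R\prd$; both give $\Ht I\leq 2$ immediately, so the difference is cosmetic.
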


\begin{proof}
(a) If $\mu(E)=e+1$ then $\lE=e+1$ (by Eq. (\ref{leqs})). Hence
$$ 2 \leq \grd F_{e}(E) \leq \Ht F_{e}(E) \leq \mu(E) -e+1 =2 $$
and so
$$\mu(E)=\lE=\Ht F_{e}(E) +e-1=\grd F_{e}(E) +e-1,$$
proving that $E$ is of the principal class.

(b) Let $U$ be a minimal reduction of $E$. Then $\mu(U)=\lE=e+1$. By (a), $U$ is of the principal class. Then $E$ is an \eq{} module (by \cite[Proposition~4.3]{cz2}) and we have
$$ \grd F_{e}(E) = \Ht F_{e}(E) = 2.$$

\smallbreak

Now, let $I$ be a \ggbie. Then, using Eq. (\ref{Ebarra1}) and (\ref{Ebarra4}), we get for (a)
$$ \mu(I) = 2 = \lI \; , \; 2 \leq \grd I \leq \Ht I \leq \lI=2$$
and so
$$ \mu(I)=\lI=\Ht I = \grd I = 2.$$
Therefore, $I$ is of the principal class. In the same way, supposing that $\lE=e+1$, then
$\lI=2$, and so
$$ \lI=\Ht I = \grd I = 2.$$
Therefore, $I$ is an \eq{} ideal.
In both cases, the equalities (\ref{ig}) hold.
\end{proof}

\section{Divisors of a module - part 2}\label{div2}

So far we have proved in Section \ref{div} that if $E$ is a finitely generated $R$-module with rank $e$, and
that if
\begin{equation*} \label{pE2}
R^m \overset{\varphi}{\ra} R^n \overset{\phi}{\ra} E \ra 0
\end{equation*}
is a finite presentation of $E$, then
$$\deter _0(E) \simeq [[E]] \simeq I_{n-e} (\rho) = F_e(E_1) \subseteq F_e(E),$$
where $\rho$ is an $n \times (n-e)$ submatrix of $\varphi$ with a non-zero $(n-e) \times (n-e)$ minor, and $E_1$ is an $R$-module of projective dimension $1$ with rank $e$ with a finite presentation given by $\rho$. Moreover, if $E\subseteq G \simeq R^e$ then
$$F_0(G/E) = \deter _0(E).$$

Now, we want to include in this context the Bourbaki ideal. Since we cannot do this directly over the ring $R$, we have to extend it previously to an adequate Nagata extension $R^{''}$. And this inclusion will be possible by means of the norm ideal $[[E]]_R$ and the fact proven in the previous section that any generic Bourbaki ideal is always isomorphic to a Fitting ideal.
\smallbreak

We begin by observing that the norm ideal behaves well under the extension of scalars by flat homomorphisms.

\begin{proposition}\label{esc}
Let $\apl{h}{R}{S}$ be a flat homomorphism of rings. Then
$$ [[E \oxr S]]_{S} \simeq [[E]]_{R} \ox_{R}S. $$
\end{proposition}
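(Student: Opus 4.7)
The plan is to pass from norm ideals to order determinants, where the behavior under base change is already encoded in Properties \ref{dfprop}(b), and then translate the resulting identity of extended ideals back into a statement about norm ideals over $S$.

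First I would fix an $R$-homomorphism $\apl{f}{E}{R^{e}}$ with $\rank f = e$; such an $f$ exists since $E$ has rank $e$. By (\ref{allideals0}) and (\ref{e3}), $[[E]]_{R} \simeq \deter_{f}(E)$ as $R$-modules. Since $h$ is flat, tensoring the inclusion $\deter_{f}(E) \hookrightarrow R$ with $S$ preserves injectivity, so
\[
[[E]]_{R} \oxr S \simeq \deter_{f}(E) \oxr S \simeq \deter_{f}(E) \cdot S
\]
as submodules of $S$; and by Properties \ref{dfprop}(b) this last ideal equals $\deter_{f \ox \Id}(E \oxr S)$.

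To identify this ideal with a representative of $[[E \oxr S]]_{S}$ via (\ref{e3}) applied over $S$, I must verify that both $E \oxr S$ and $f \ox \Id$ have rank $e$ over $S$. The crucial point is that the natural map $\Q(R) \oxr S \to \Q(S)$ is injective: flatness implies that nonzerodivisors of $R$ map to nonzerodivisors of $S$ (tensor the exact sequence $0 \to R \xrightarrow{r} R$ with $S$), so $\Q(R) \oxr S$ is canonically the localization $S[h(\Sigma)^{-1}]$ of $S$ at the image of the set $\Sigma$ of nonzerodivisors of $R$, and hence embeds in $\Q(S)$. The chain
\[
(E \oxr S) \ox_{S} \Q(S) \simeq E \oxr \Q(S) \simeq (E \oxr \Q(R)) \ox_{\Q(R)} \Q(S) \simeq \Q(S)^{e}
\]
then yields $\rank_{S}(E \oxr S) = e$, and an analogous computation applied to $\im f$, together with the equality $\im(f \ox \Id) = \im f \oxr S$ (by flatness), gives $\rank(f \ox \Id) = e$.

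Concatenating the identifications above produces
\[
[[E]]_{R} \oxr S \simeq \deter_{f}(E) \cdot S = \deter_{f \ox \Id}(E \oxr S) \simeq [[E \oxr S]]_{S},
\]
which is the required isomorphism of $S$-modules. The main obstacle I foresee is the rank verification in the previous paragraph—in particular the injectivity of $\Q(R) \oxr S \hookrightarrow \Q(S)$—since this is the one place where flatness is genuinely exploited beyond the routine preservation of inclusions under base change. Everything else is essentially bookkeeping that links Properties \ref{dfprop}(b) to the isomorphism classes of the norm ideals on both sides.
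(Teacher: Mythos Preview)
Your proof is correct and follows essentially the same route as the paper: reduce to $\deter_0$ via Eq.~(\ref{e3}), invoke Properties~\ref{dfprop}(b), and check that rank is preserved so that Eq.~(\ref{e3}) applies again over $S$. The paper's proof is two sentences and simply cites the rank preservation under flat base change as ``known'', whereas you supply the argument (nonzerodivisors stay nonzerodivisors, hence $\Q(R)\oxr S$ embeds in $\Q(S)$); this is exactly the content the paper is suppressing, and your identification of it as the one substantive step is accurate.
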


\begin{proof}
It is known that if $E$ has rank and $\apl{h}{R}{S}$ is flat, then $E \oxr S$ has rank and $\rank _R E = \rank _S E \oxr S$. Then, the statement follows by Eq. (\ref{e3}) and Properties \ref{dfprop}.
\end{proof}

In particular, supposing that $R\prd$ is a Nagata extension of $R$ and $E\prd= E\oxr R\prd$, as in the previous sections, we get
\begin{equation}\label{norma}
[[E'']]_{R''}= [[E \oxr R'']]_{R''} \simeq [[E]]_{R} \ox_{R}R'' = \deter_{0}(E) \cdot  R''.
\end{equation}

\smallbreak

In the notation of section \ref{qm}, suppose that $U=\sum_{i=1}^{k}Ra_{i}$ is a reduction of $E$ and $F=\sum_{j=1}^{e-1}R''x_{j}$ with $x_{j} = \sum_{i=1}^k z_{ij} a_{i}$.
Let $\{\sek{x}{n}\}$ be a generating set of $E\prd$ containing the basis $\seq{x}{e-1}$ of $F$.
Let
$$ {R\prd}^m \overset{\varphi}{\ra} {R\prd}^n \ra E\prd \ra 0 $$
be a finite free presentation of $E\prd$ \wrt the generators
$\sek{x}{n}$, and write also by $\var$ the respective relation
matrix. As we observed, in section \ref{gbifi}, there exists an $(n-e+1)\x(n-e)$ submatrix $\psi$ of $\varphi$ satisfying $\grade I_{n-e}(\psi) \geq 1$. Moreover, if $\grade F_{e}(E) \geq 2$ then $I_{n-e}(\psi) \simeq E''/F \simeq I$ (Theorem \ref{fi}) with $I$ a \gbie.
Now let $\rho$ be the $n\x(n-e)$ submatrix of $\varphi$ containing $\psi$. The matrix $\rho$ defines a finite free presentation
$$ 0 \ra {R''}^{n-e} \overset{\rho}{\ra} {R''}^{n} \ra E_{1}'' \ra 0 $$
with $E_{1}''$ an $R''$-module satisfying $\rank(E_{1}'')=e$, $E_{1}''/\tau_{R''}(E_{1}'')\simeq E''$ (as observed in section \ref{div}).
Therefore
\begin{equation}\label{rhovarphi}
I_{n-e}(\psi) \subseteq I_{n-e}(\rho) \subseteq I_{n-e}(\varphi).
\end{equation}
Moreover, by Eq. (\ref{norma})
\begin{equation}\label{rhovarphi1}
\deter_{0}(E)\cdot R'' \simeq [[E'']]_{R''} \simeq F_{e}(E_{1}'')
\subseteq F_{e}(E'') \simeq F_{e}(E) \oxr R'' = F_{e}(E)\cdot R''.
\end{equation}
Further, if $\grade F_{e}(E) \geq 2$ then
\begin{equation}\label{rhovarphi2}
I \simeq I_{n-e}(\psi) \subseteq I_{n-e}(\rho) = F_{e}(E_{1}'') \simeq [[E'']]_{R''}.
\end{equation}

In the case where $E$ has projective dimension equal to one, we may assert in addition that $F_{e}(E'')$ is a representative of $[[E'']]_{R''}$.

\begin{proposition}\label{norma2}
Let $R$ be a \nol{} and  $E$ a \fgrmo{} having rank $e \geq 2$. If $\grade F_{e}(E) \geq 2$ then
$$ I \simeq I_{n-e} (\psi) \subseteq I_{n-e}(\rho) \subseteq F_{e}(E'')\,,$$
with $I_{n-e}(\rho) \simeq [[E'']]_{R''} \simeq \deter_{0}(E)\cdot R''$, for any \gbi{} $I$ of $E$.
If moreover $\pjd E=1$, then
$$ I \simeq I_{n-e} (\psi) \subseteq I_{n-e}(\rho) = F_{e}(E'')$$
for any \gbi{} $I$ of $E$.
\end{proposition}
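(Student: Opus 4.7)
The plan here is to consolidate material already in place. I would invoke \reft{fi} for $I \simeq I_{n-e}(\psi)$, read off the containments $I_{n-e}(\psi) \subseteq I_{n-e}(\rho) \subseteq F_e(E\prd) = I_{n-e}(\varphi)$ from the submatrix inclusions $\psi \subseteq \rho \subseteq \varphi$, and obtain the chain $I_{n-e}(\rho) \simeq [[E\prd]]_{R\prd} \simeq \deter_0(E) \cdot R\prd$ by concatenating: (a) the identification $I_{n-e}(\rho) = F_e(E_1\prd)$ coming from the definition of $E_1\prd$ as the cokernel of $\rho$; (b) Villamayor's realization of $F_e(E_1\prd)$ as a representative of $[[E_1\prd]]_{R\prd}$; (c) the equality $[[E_1\prd]]_{R\prd} = [[E\prd]]_{R\prd}$ coming from $E_1\prd/\tau(E_1\prd) \simeq E\prd/\tau(E\prd)$ together with \refpp{dfprop}(a); and (d) \refp{esc} applied to the flat Nagata extension $R \to R\prd$.

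\textbf{Second assertion.} Under the additional hypothesis $\pjd E = 1$, the plan is to promote the fractional-ideal isomorphism $I_{n-e}(\rho) \simeq F_e(E\prd)$ to an honest equality of ideals in $R\prd$ by assembling the following chain of genuine ideal equalities:
\begin{align*}
I_{n-e}(\rho) &= F_e(E_1\prd) = \deter_0(E_1\prd) = \deter_0(E_1) \cdot R\prd \\
&= \deter_0(E) \cdot R\prd = F_e(E) \cdot R\prd = F_e(E\prd).
\end{align*}
The second and penultimate equalities apply \refeq{pjd1} to $E_1\prd$ (which has projective dimension at most one by construction) and to $E$ (by hypothesis), respectively; the remaining equalities come from commutativity of $\deter_0$ and of Fitting ideals with the flat Nagata extension (via parts (a) and (b) of \refpp{dfprop} and the standard base-change behaviour of Fitting ideals), combined with the relation $\deter_0(E_1) = \deter_0(E)$, which is extracted from $E_1/\tau(E_1) \simeq E/\tau(E)$ via \refpp{dfprop}(a).

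\textbf{Main obstacle.} The principal difficulty is keeping careful track of which relations among these ideals hold as honest equalities in $R\prd$ and which only up to fractional-ideal isomorphism. The decisive leverage is \refeq{pjd1}, which upgrades the general isomorphism $F_e(M) \simeq \deter_0(M)$ to a strict equality precisely when $\pjd M \leq 1$. The added hypothesis $\pjd E = 1$ guarantees that \refeq{pjd1} can be applied both to $E_1$ (automatic, since $\pjd E_1 \leq 1$ always) and to $E$ itself, which is exactly what closes the chain displayed above.
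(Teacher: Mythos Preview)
Your first assertion is exactly right and matches the paper's proof, which simply cites Eq.~(\ref{rhovarphi}) and~(\ref{rhovarphi2}); you have merely unpacked what those equations encode.

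For the second assertion your overall strategy is sound, but one link in your displayed chain does not hold as written. You invoke $\deter_0(E_1\prd) = \deter_0(E_1)\cdot R\prd$ via \refpp{dfprop}(b), which presupposes that $E_1\prd = E_1 \otimes_R R\prd$ for some $R$-module $E_1$. But the $E_1\prd$ appearing in Section~\ref{div2} is defined intrinsically over $R\prd$ as the cokernel of $\rho$, where $\rho$ is a submatrix of the presentation $\varphi$ of $E\prd$ taken with respect to the generators $x_1,\dots,x_n$ that extend the basis $x_1,\dots,x_{e-1}$ of $F$. Since the $x_j$ for $j\leq e-1$ are generic combinations with coefficients in $R\prd\setminus R$, this presentation does not descend to $R$, and there is no $R$-module $E_1$ whose extension is $E_1\prd$. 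The detour through an $R$-module $E_1$ is both unjustified and unnecessary: you can go directly from $\deter_0(E_1\prd)$ to $\deter_0(E\prd)$ over $R\prd$ using \refpp{dfprop}(a) together with $E_1\prd/\tau_{R\prd}(E_1\prd)\simeq E\prd$, and only then descend to $\deter_0(E)\cdot R\prd$ via \refpp{dfprop}(b). With that correction your chain closes exactly as you intended.
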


\begin{proof}
Follows by Eq. (\ref{rhovarphi}) and (\ref{rhovarphi2}).
\end{proof}

Next suppose that $\Rm$ is a $2$-dimensional regular local ring and $E$ is a \fgtfrmo with rank $e$. Recall that $E$ is said to be {\it contracted} if $E=ES \cap R^{e}$, where $S=R\left[\frac{\fm}{a}\right]$, $a$ is a minimal generator of $\fm$ and $ES$ is the $S$-submodule of $S^{e}$ generated by $S$. Contracted modules were defined by V. Kodiyalam in \cite{ko} as an extension to modules of the notion of contracted ideal. Integrally closed modules over a $2$-dimensional regular local ring $R$ are contracted. Given a contracted module $E$, it may be seen that $G = E^{**}$ is free and that $G/E$ is of finite length, so $\grade G/E =2$ and $E$ is an ideal module, see \cite[Proposition 2.1]{ko}.

It then happens that contracted modules have a special generic Bourbaki ideal: It is proved in \cite[Corollary 3.6]{hu} that if $E$ is contracted then $F_{e}(E'')$ is a \ggbi. So as a consequence of all the above relations we have the following:

\begin{corollary} \label{norma3}
Let $\Rm$ be a $2$-dimensional regular local ring and $E$ a \fgtfrmo with rank $e$ which is not free. If $E$ is contracted then there exists a \ggbi{} $I$ of $E$ which is a representative of $[[E'']]_{R''}$ and in this case $$ I=F_{e}(E'') \simeq [[E'']]_{R''} \simeq \deter_{0}(E)\cdot R''. $$
\end{corollary}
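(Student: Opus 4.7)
The plan is to combine the two main structural facts available: the result of \cite[Corollary 3.6]{hu} that $F_{e}(E'')$ is itself a \ggbi{} of $E$ when $E$ is contracted, together with Proposition \ref{norma2}, which under the hypothesis $\pjd E=1$ forces $I_{n-e}(\rho)=F_{e}(E'')$ and identifies this ideal with (a representative of) $[[E'']]_{R''}$. The task is then just to verify that the hypotheses of Proposition \ref{norma2} apply in our situation and to assemble the pieces.

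First I would verify that $E$ is an ideal module with $\grade F_{e}(E)\geq 2$. Since $E$ is contracted and not free, the cited result of Kodiyalam \cite[Proposition 2.1]{ko} recalled just before the statement gives that $G=E^{**}$ is free and $G/E$ has finite length; in particular $\grade G/E=2$, so $E \subseteq G\simeq R^{e}$ is an ideal module, and $\grade F_{e}(E)\geq 2$ by (\ref{FeF0}). Next I would establish that $\pjd E=1$: because $R$ is a $2$-dimensional regular local domain and $E$ is finitely generated torsionfree, no element of $E$ is annihilated by a nonzero element of $R$, hence $\mathfrak m\notin\Ass E$ and $\depth E\geq 1$; Auslander--Buchsbaum and the hypothesis that $E$ is not free then force $\pjd E=1$.

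With these two facts in hand, I invoke Proposition \ref{norma2}. The second assertion of that proposition, valid precisely when $\pjd E=1$, yields
\[
I_{n-e}(\rho)=F_{e}(E'')\quad\text{and}\quad I_{n-e}(\rho)\simeq [[E'']]_{R''}\simeq\deter_{0}(E)\cdot R'',
\]
so that $F_{e}(E'')\simeq [[E'']]_{R''}\simeq\deter_{0}(E)\cdot R''$. On the other hand, by \cite[Corollary 3.6]{hu}, the contracted hypothesis ensures that $F_{e}(E'')$ is itself a \ggbi{} of $E$. Setting $I:=F_{e}(E'')$, this $I$ is a good generic Bourbaki ideal of $E$, and the chain of isomorphisms above is exactly the required identification
\[
I=F_{e}(E'')\simeq [[E'']]_{R''}\simeq\deter_{0}(E)\cdot R''.
\]

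The only genuinely external input is \cite[Corollary 3.6]{hu}, which I treat as a black box; the rest amounts to applying Proposition \ref{norma2} after the short verifications above. I expect no substantive obstacle, but the step requiring most care is checking that the $F$ witnessing $F_{e}(E'')$ as a \ggbi{} can be used simultaneously in the presentation that produces $\rho$ and $\psi$ in Proposition \ref{norma2}; here one uses the essential uniqueness of the generic Bourbaki construction recalled after (\ref{ii}), so that the two choices may be harmonized up to isomorphism, preserving the final chain of isomorphisms.
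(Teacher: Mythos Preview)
Your proposal is correct and follows essentially the same approach as the paper: verify that $E$ is an ideal module with $\grade F_e(E)\geq 2$ (via Kodiyalam), that $\pjd E=1$ (which the paper asserts without argument; your Auslander--Buchsbaum justification is fine), then apply Proposition~\ref{norma2} together with \cite[Corollary~3.6]{hu}. Your closing concern about harmonizing the free submodule $F$ used in \cite{hu} with the one producing $\rho,\psi$ is unnecessary: the isomorphism $F_e(E'')\simeq[[E'']]_{R''}\simeq\deter_0(E)\cdot R''$ involves only invariants of $E''$ (the Fitting ideal and the norm), so it is independent of any particular choice of $F$ or presentation, and once \cite{hu} tells you $F_e(E'')$ is a \ggbi{} the conclusion is immediate.
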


\begin{proof}
Since $E$ is an ideal module, $\grade F_e(E) = 2$. Also, $\pjd E=1$. The result then follows by Proposition \ref{norma2}.
\end{proof}

We finish by pointing out that, in the above conditions, the blow up at any \gbi{} has an universal flattening property under birational morphisms, as we described in Theorem \ref{blow}. For instance, if $E$ is an integrally closed module over a $2$-dimensional regular local ring $R$, we have that the Rees algebra of $E$ is Cohen-Macaulay. So after a suitable Nagata extension $R''$ of $R$, this Rees algebra is a deformation of the Rees algebra of a \ggbi{} of $E$, whose blow up has such universal flattening property with respect to $E'' = E \otimes_R R''$. It would be interesting to know other instances where the Rees algebra of a module satisfies a similar universal property.

\bigskip

\end{document}